\newtheorem{theorem}{Theorem}[section]
\newtheorem{lemma}[theorem]{Lemma}
\newtheorem{proposition}[theorem]{Proposition}
\newtheorem{remark}[theorem]{Remark}
\newtheorem{definition}[theorem]{Definition}
\newtheorem{example}[theorem]{Example}
\newtheorem{corollary}[theorem]{Corollary}
\newcommand{\sect}[1]{\section{#1} \setcounter{equation}{0} }
\newcounter{ca}
\newcommand{\norm}[2]{\left\|#1\right\|_{#2}}
\newcommand{\dn}{\rho_n}
\newcommand{\dm}{\rho_m}
\newcommand{\dnx}{\dn(x)}
\newcommand{\dmx}{\dm(x)}
\newcommand{\ds}{\displaystyle}
 \newcommand{\ec}{\end{comment}}
\newcommand{\bc}{ \begin{comment}
 }
 \newcommand{\bsmall}{\begin{scriptsize} \vspace{1cm} \mbox{}\\
  $\downarrow\rule{\textwidth}{0.1mm}  \downarrow$
 \setlength{\baselineskip}{0.3mm}
 }
 \newcommand{\esmall}{\noindent $\uparrow \rule{\textwidth}{0.1mm}\uparrow$ \end{scriptsize}\vspace{1cm} }
\newcommand{\E}{{\mathcal E}}
  \newcommand{\II}{{\mathfrak J}}
 \newcommand{\I}{{\mathcal I}}
\newcommand{\ccc}{{\mathfrak  \vartheta}}
\newcommand{\A}{{\mathcal A}}
\newcommand{\D}{{\mathcal D}}
\newcommand{\andd}{\quad\mbox{\rm and}\quad}
\newcommand\e{{\varepsilon}}
\newcommand\w{{\omega}}
\newcommand{\Q}{{\mathcal Q}}
\newcommand{\Z}{{\mathcal Z}}
\newcommand{\pmin}{\mathrm{pmin}}
\def\be  {\begin{equation}}
\def\ee  {\end{equation}}
\def\ba  {\begin{eqnarray}}
\def\ea  {\end{eqnarray}}
\def\baa {\begin{eqnarray*}}
\def\eaa {\end{eqnarray*}}
\newenvironment{comment}[2]
{\bgroup\vspace{7pt}
\begin{tabular}{|p{5in}|}
\hline \qquad \bf \footnotesize Comment -- to be deleted in the final version \\
\hline
\quad\sl\footnotesize #1#2} {\\ \hline \end{tabular}
\vspace{7pt}\indent\egroup}
\def\updots{\mathinner{\mkern
1mu\raise 1pt \hbox{.}\mkern 2mu \mkern 2mu \raise
4pt\hbox{.}\mkern 1mu \raise 7pt\vbox {\kern 7 pt\hbox{.}}} }
\def \dist{\mathop{\rm dist}\nolimits}
\newcommand{\z}{\mathit{z}}
\newcommand{\C}{\mathbb C}
\newcommand{\W}{{\mathcal{W}}}
\newcommand{\R}{\mathbb R}
\newcommand{\N}{\mathbb N}
 \renewcommand{\a}{\alpha}
\renewcommand{\b}{\beta}
\newcommand{\ineq}[1]{(\ref{#1})}
\newcommand{\ie}{{\em i.e., }}
\newcommand{\eg}{{\em e.g. }}
\newcommand{\bpic}{
\begin{center}
}
\newcommand{\epic}{
\endpspicture
\end{center}
}
\newcommand{\st}{\;\; \big| \;\;}
\renewcommand{\L}{\mathbb{L}}
\newcommand{\Lp}{\L_p}
\newcommand{\Poly}{\Pi}
 \newcommand{\AC}{\mathrm{AC}}
  \newcommand{\loc}{\mathrm{loc}}
\newcommand{\Dom}{{\mathfrak{D}}}
 \newcommand{\newconst}{\sigma}
\newcommand{\thm}[1]{Theorem~\ref{#1}}
\newcommand{\lem}[1]{Lemma~\ref{#1}}
\newcommand{\lemp}[1]{Lemma~\ref{properties}(\ref{#1})}
\newcommand{\cor}[1]{Corollary~\ref{#1}}
\newcommand{\prop}[1]{Proposition~\ref{#1}}
\title{Polynomial  approximation with doubling weights having finitely many   zeros and singularities}
\author{
Kirill A.  Kopotun\thanks{Department of Mathematics, University of
Manitoba, Winnipeg, Manitoba, R3T 2N2, Canada ({\tt
kopotunk@cc.umanitoba.ca}). Supported by NSERC of Canada.}    }
\begin{document}

\maketitle

\begin{abstract}
We prove matching direct and inverse theorems for (algebraic) polynomial approximation with doubling weights $w$ having finitely many   zeros and singularities (i.e.,  points where $w$ becomes infinite) on an interval and not too ``rapidly changing'' away from these zeros and singularities. This class of doubling weights
 is rather wide and, in particular, includes the classical Jacobi weights, generalized Jacobi weights and generalized Ditzian-Totik weights.
  We approximate in the weighted $\Lp$ (quasi) norm $\norm{f}{p, w}$ with $0<p<\infty$, where   $\norm{f}{p, w} := \left(\int_{-1}^1 |f(u)|^p w(u) du \right)^{1/p}$. Equivalence type results involving related realization functionals are also discussed.
\end{abstract}

%

 \sect{Introduction}

The main goal of this paper is to prove matching direct and inverse theorems for polynomial approximation with doubling weights $w$ having finitely many   zeros and singularities (\ie  points where $w$ becomes infinite) on an interval and not too ``rapidly changing''. In order to discuss this further, we need to
recall some notation and definitions. As usual, $\Lp(I)$, $0<p<\infty$, is the set of measurable on $I$ functions $f$ equipped with the (quasi)norm $\norm{f}{\Lp(I)} := \left(\int_I |f(u)|^p du \right)^{1/p}$.
We say that a   function $w$ is a doubling weight on $[-1,1]$ if $w\in\L_1[-1,1]$ is nonnegative, not identically equal to zero, and   there exists a positive constant $L$ (the so-called doubling constant of $w$) such that
$w(2I) \leq L w(I)$, for any interval $I\subset [-1,1]$. Here, $w(J) := \int_{J \cap [-1,1]} w(u) du$, and $2I$ denotes the interval of length $2|I|$ ($|I|$ is the length of   $I$) with the same center as $I$.
Doubling weights, their properties and various approximation results are discussed  in a series of papers \cites{mt2000, mt2001, mt1998, mt1999} by G. Mastroianni and V. Totik. In particular, it turns out that one can obtain many analogs of theorems for unweighted approximation by considering weights $w_n$ which are certain averages of $w$ depending on the degree of approximating polynomials. Recall (see \eg \cite{mt2001}) that $w_n(x) := \dnx^{-1} \int_{x-\dnx}^{x+\dnx} w(u) du$, where $\dnx = n^{-1}(1-x^2)^{1/2} + n^{-2}$. We refer the reader to \cites{mt2001, mt1998} and \cite{k-weighted} for further discussions of results involving $w_n$.
At the same time, it is clear that averaging removes singularities (and ``lifts'' zeros) of weights, and so a natural question is whether or not one can obtain matching direct and   inverse theorems for general doubling weights. This seems to be a very hard question since a general doubling weight can exhibit some rather ``wild'' behavior that makes it  hard if not impossible to work with (while proving positive approximation results). For example, doubling weights can vanish on sets of positive measure as well as they can be ``rapidly changing''.
Even relatively well-behaved weights (such as generalized Jacobi weights) can cause difficulties because of the presence of internal zeros/singularities. For example, see \cite{mt2001} for discussions of difficulties in forming weighted moduli of smoothness for generalized Jacobi weights, and  \cite{mt1999, dbmr} for   examples showing that the original Jackson-Favard estimates are no longer valid for some specific doubling weights.


Still, if a doubling weight $w$ has only finitely many zeros and singularities inside $[-1,1]$ and is not too rapidly changing once one moves away from these points (\ie if it behaves like $w_n$ there), the matching direct and inverse results are possible (this is the main result of this paper).
Earlier, this type of results was established in \cite[Theorem 1.4]{mt2001} in the uniform norm weighted by   generalized Jacobi weights with finitely many zeros in $[-1,1]$, and in
 \cite[Theorem 3.1]{dbmr} in the $\Lp$ norm (with $1\leq p \leq \infty$) weighted by  a specific weight having one zero at the origin and zeros or singularities at $\pm 1$ (see also \cites{cm,dbmv} for related results).
 However, we are not aware of any results of this type for $0<p<1$. Perhaps, the reason for this is that the usual method seems to be to first establish the equivalence of the moduli and some related $K$-functionals, and then proceed with the proofs. This method cannot work for $0<p<1$ since it  is rather well known (see \cite{dhi}) that $K$-functionals are often zeros if $0<p<1$.

 Our approach is different and is actually somewhat similar to the one used in our earlier paper \cite{k-weighted} where matching direct and inverse theorems were established for the weights $w_n$ and all $0<p\leq \infty$.
 Namely, we derive the equivalence of the moduli and related   ``realization'' functionals as a corollary of our estimates, and our proofs of direct/inverse theorems does not rely on this equivalence.

 The class of doubling weights $\W(\Z)$ that we introduce in Section~\ref{propdouble}
 is rather wide and, in particular, includes the classical Jacobi weights, generalized Jacobi weights and generalized Ditzian-Totik weights.
 We approximate in the weighted $\Lp$ (quasi) norm with $0<p<\infty$. For $p<\infty$, the weighted (quasi)norm is defined as $\norm{f}{\Lp(I), w} := \left(\int_I |f(u)|^p w(u) du \right)^{1/p}$ and $\norm{f}{p, w} := \norm{f}{\Lp[-1,1], w}$. We also denote by $\Lp^w$ the set of all functions on $[-1,1]$ such that $\norm{f}{p, w} < \infty$.

 Many of the results presented in this paper are also valid if $p=\infty$. However, one can only approximate essentially bounded functions by polynomials if the weights are essentially bounded. This puts a rather significant restriction on the weights, and the weights having the so-called property $A^*$ are usually considered if $p=\infty$ instead of a  wider class of doubling weights. This is the main reason why
we only discuss the case $0<p<\infty$ in this paper, and analogous results for $p=\infty$ and $A^*$ weights will appear elsewhere. 

The paper is organized as follows. In Section~\ref{propdouble}, we define a class of doubling weights  $\W(\Z)$  with finitely many zeros and singularities inside $[-1,1]$ and give several equivalent conditions guaranteeing that $w$ is in this class. Main part weighted moduli, (complete) weighted moduli as well as averaged moduli of smoothness are introduced in Section~\ref{moduli}. A relation between the degrees of local approximation by piecewise polynomials and the main part moduli is established in Section~\ref{locapp}. \lem{lem2.1} in this section is the main result that allows us to estimate the degree of approximation away from zeros and singularities of the weight $w$.
A Jackson type theorem with doubling weights  from the class $\W(\Z)$ is proved in Section~\ref{jacksec}. This is the main direct result in this paper. In Section~\ref{remezmb}, we discuss several Remez type and Markov-Bernstein type results that are needed in the proof of the inverse theorems. In Section~\ref{cruciallemmas}, we prove two crucial lemmas on local approximation of polynomials of degree $<n$ by Taylor polynomials of degree $<r$ (lemmas deal  with cases $1\leq p <\infty$ and  $0<p<1$ separately). The inverse results heavily depend on these lemmas. Some preliminary results needed in the proofs of inverse theorems are given in Section~\ref{prelinv}. The inverse theorems in   cases $1\leq p < \infty$ and $0<p<1$ are proved, respectively, in Sections~\ref{invbig} and \ref{invsmall}. In Section~\ref{real}, we obtain realization type results by proving the equivalence of the averaged and ``regular'' weighted moduli and appropriate realization functionals. Finally, an auxiliary result that is well known in the unweighted case  about a polynomial of near best approximation (in weighted $\Lp$ with $w$ from the class $\W(\Z)$) being a near best approximant on a slightly larger interval is proved in Section~\ref{appendix}. This result is needed in the proof of the direct theorem and it could be used to provide an alternative proof of relations between different moduli with different parameters $A$.

 \sect{Doubling weights with finitely many zeros and singularities} \label{propdouble}

 Let $w$ be a doubling weight on $[-1,1]$    such that
   $w(z)=0$ or $w(z)=\infty$ at finitely many points $z$.
Moreover, we assume that $w(x)$ ``does not rapidly change'' when $x$ is ``far'' from these points $z$. These      assumptions certainly limit the set of the weights that we consider since there are  doubling weights that vanish on sets of positive measure and, at the same time, there are ``rapidly changing'' positive doubling weights. However, many important weights (such as generalized Jacobi weights or the so-called generalized Ditzian-Totik weights, for example) satisfy this property (see below for their definitions).

We now make everything precise in the following definition noting that, throughout this paper, if $y<x$, then $[x,y] := [y,x]$ (and not $\emptyset$ as it is sometimes defined).
We also denote $\varphi(x):= (1-x^2)^{1/2}$, $\rho(h,x) := h\varphi(x)+h^2$ and note that $\dnx = \rho(1/n, x)$.

  \begin{definition} \label{def1.1}
  Let $M\in\N$ and $\Z := (z_j)_{j=1}^M$,  $-1 \leq    z_1 < \dots < z_{M-1} < z_M \leq  1$. We say that a doubling weight $w$ belongs to the class
  $\W(\Z)$ (and write $w\in \W(\Z)$) if,
 for any $\e>0$ and  $x, y\in [-1,1]$ such that $|x-y| \leq  \rho(\e, x)$ and
 $\dist\left( [x,y], z_j \right) \geq \rho(\e, z_j)$ for all $1\leq j \leq M$, the following inequalities are satisfied
 \be \label{nochange}
c_* w(y) \leq  w(x) \leq c_*^{-1} w(y) , 
 \ee
 where the   constant $c_*$ depends only on $w$, and does not depend on $x$, $y$ and $\e$.
 \end{definition}

 Note that the set $\Z$ is where $w$ can have zeros or singularities, but
  we do not actually require that it happens at all points in $\Z$. In other words, we do not exclude the possibility that $w$
 is ``well behaved'' at some/all points in $\Z$. We also note that the set $\Z$ is considered fixed throughout this paper, and so we refer to it in various theorems without redefining it (unless a statement/example is given for a specific $\Z$ in which case it will be explicitly stated). Also, note that the moduli of smoothness that we define below depend on $\Z$ and so, in particular, all constants in our estimates involving moduli will depend on $M$, but we are not   explicitly stating this in every statement.

 It is convenient to denote
 \[
 \Z_{A,h}^j  := \left\{ x \in [-1,1] \st |x-z_j| \leq A \rho(h, z_j)  \right\},  \quad   1\leq j\leq M   ,
 \]
 \[
 \Z_{A,h}  := \cup_{j=1}^M \Z_{A,h}^j,
 \]
 and
 \[
 \I_{A, h} :=  \left([-1,1] \setminus  \Z_{A,h}\right)^{cl}   = \left\{ x\in [-1,1] \st |x-z_j| \geq  A \rho(h,z_j), \; \text{for all } 1\leq j \leq M \right\}.
 \]
Also,
 \[
  \D := \D(w) := \pmin  \left\{  |z_j - z_{j-1}| \st 1\leq j\leq M+1 \right\} ,
 \]
 where $z_0 := -1$, $z_{M+1} := 1$
 and
 $\pmin(S)$ is the smallest {\em positive} number from the set $S$ of nonnegative reals.

Note that the condition $\dist\left( [x,y], z_j \right) \geq \rho(\e, z_j)$, $1\leq j \leq M$, in Definition~\ref{def1.1}  is equivalent to $[x,y] \subset \I_{1, \e}$.


Throughout this paper,  $(x_i)_{i=0}^n$ is the Chebyshev partition of $[-1,1]$, \ie
 $x_i = \cos(i\pi/n)$, $0\leq i\leq n$. For $1\leq i\leq n$, we also denote $I_i := [x_i, x_{i-1}]$,
 \[
 \psi_i :=    \psi_i(x) :=  \frac{|I_i|}{|x-x_i|+|I_i|}
\andd
\chi_i(x) := \chi_{[x_i, 1]}(x) =
\begin{cases}
1, & \mbox{\rm if } x_i \leq x \leq 1, \\
0, & \mbox{\rm otherwise}.
\end{cases}
\]

We need the following facts about the Chebyshev partition and the weights $w_n$.
\begin{itemize}
\item $\dnx \leq |I_i| \leq 5\dnx$ for all $x\in I_i$ and $1\leq i \leq n$.
\item $|I_i|/3 \leq |I_{i+1}| \leq 3|I_i|$ for all $1\leq i \leq n-1$.
\item If $\a\geq 2$, then $\sum_{i=1}^n \psi_i(x)^\a \leq c$ for all $-1\leq x\leq 1$, and $\int_{-1}^1 \psi_i(x)^\a dx \leq c |I_i|$ for all $1\leq i \leq n$.
\item For all $x,y\in [-1,1]$, $\dn(y)^2 \leq 4 \dnx (|x-y|+\dnx)$.
\item For any $c_0>0$ and $x\in [-1,1]$, the interval $[x-c_0\dnx, x+c_0\dnx]$ has nonempty intersection with at most $m$ intervals $I_i$, $1\leq i \leq n$, where $m$ is some natural number that depends only on $c_0$. This follows from \prop{auxprop} whose proof we postpone until Section~\ref{locapp}.
\item  For any doubling weight $w$, if $n\sim m$, then $w_n(x) \sim w_m(x)$, for all $x\in [-1,1]$.
 \item For any doubling weight $w$ and $n\in\N$, $w_n(x) \sim w_n(y)$ if $|x-y|\leq c_* \dnx$, with   equivalence constants  depending only on $c_*$ and the doubling constant of $w$ (see  \cite[(2.3)]{mt2001}).
 \item For any doubling weight $w$, $n\in\N$, $1\leq i \leq n$, $x\in [-1,1]$ and $y\in I_i$, $w_n(x) \leq c \psi_i(x)^{-s} w_n(y)$ and $w_n(y) \leq c \psi_i(x)^{-s} w_n(x)$, where constants $c$ and $s \geq 0$ depend only on the doubling constant of $w$ (see \cite[Lemma 2.5]{k-weighted}).
\end{itemize}

We also mention that defining $I_i$'s to be closed   causes some ambiguity at the boundaries of these intervals since any two adjacent intervals in this partition have a nonempty intersection. Hence, when we make statements of type ``let $x\in [-1,1]$ and let $\mu$ be such that $x\in I_\mu$'', this is ambiguous if $x = x_j$ for some $1\leq j \leq n-1$, since there are actually two intervals   containing $x$ (namely, $I_j$ and $I_{j+1}$).
To remedy this problem, we   use the convention that, if $x$ belongs to two adjacent (closed) intervals, we   always choose the right interval as the one containing $x$.

We are now ready to discuss several conditions that are equivalent to the statement that a doubling weight is in the class $\W(\Z)$.

\begin{lemma} \label{properties}
Let $w$ be a doubling weight. The following conditions   are equivalent.
\begin{enumerate}[(i)]
\item \label{i} $w\in \W(\Z)$.
\item \label{ii} For any $n\in\N$ and $x, y$ such that $[x,y] \subset \I_{1, 1/n}$ and $|x-y| \leq  \dnx$, inequalities \ineq{nochange} are satisfied with the constant $c_*$ depending only on $w$.

\item \label{iii}
For some $N\in\N$ that depends only on $w$, and any  $n\geq N$ and $x, y$ such that $[x,y] \subset \I_{1, 1/n}$ and $|x-y| \leq  \dnx$, inequalities \ineq{nochange} are satisfied with the constant $c_*$ depending only on  $w$.

\item  \label{iv} For any $n\in\N$, $A, B >0$, and $x, y$ such that $[x,y] \subset \I_{A, 1/n}$ and $|x-y| \leq B \dnx$, inequalities \ineq{nochange} are satisfied with the constant $c_*$ depending only on
$w$, $A$ and $B$.
\item \label{v}
For  any $n\in\N$ and $A>0$,
 \[
  w(x) \sim  w_n(x) , \quad x \in \I_{A, 1/n} ,
\]
where the equivalence constants depend only on $w$ and $A$, and are independent of $x$ and $n$.


\end{enumerate}
\end{lemma}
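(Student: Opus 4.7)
The plan is to establish the cycle (i)$\,\Rightarrow\,$(ii)$\,\Rightarrow\,$(iii)$\,\Rightarrow\,$(iv)$\,\Rightarrow\,$(v)$\,\Rightarrow\,$(i). The first two implications are immediate: (i)$\,\Rightarrow\,$(ii) is obtained by specializing $\e=1/n$ in Definition~\ref{def1.1}, since $\rho_n(x)=\rho(1/n,x)$; and (ii)$\,\Rightarrow\,$(iii) holds trivially with $N=1$. Note that (iv)$\,\Rightarrow\,$(i) would also be immediate (choose $n=\lceil 1/\e\rceil$), but we close the cycle through (v) to obtain that equivalence as well.

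The main work is (iii)$\,\Rightarrow\,$(iv). Given $n\in\N$, $A,B>0$ and $[x,y]\subset \I_{A,1/n}$ with $|x-y|\leq B\rho_n(x)$, I would choose $n^*\geq N$ with $n^*\geq n\cdot\max(1/A,\,1/\sqrt A)$; then $\rho(1/n^*,z_j)\leq A\rho(1/n,z_j)$ for every $j$, so $[x,y]\subset \I_{1,1/n^*}$. A short calculation gives $\rho_n(x)\leq C(A,N)\rho_{n^*}(x)$, whence $|x-y|\leq BC(A,N)\rho_{n^*}(x)$. Next, I would construct a chain $x=t_0,t_1,\dots,t_k=y$ with $k=O(BC(A,N))$ and $|t_i-t_{i-1}|\leq \rho_{n^*}(t_{i-1})$; such a chain exists because $\rho_{n^*}$ varies slowly along $[x,y]$, namely $\rho_{n^*}(t)^2\leq 4\rho_{n^*}(x)(|x-t|+\rho_{n^*}(x))$ from the bullet list. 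Each link lies in $\I_{1,1/n^*}$, so applying (iii) to each link and iterating yields \ineq{nochange} with final constant $c_*^k$ depending only on $w$, $A$, and $B$.

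For (iv)$\,\Rightarrow\,$(v), I would use the integral representation. Fix $x\in \I_{A,1/n}$ and assume without loss of generality $A\geq 2$ (the case $A<2$ is reduced to $A\geq 2$ by passing to a larger $n^*$ exactly as above, at the cost of adjusting equivalence constants). For every $u\in[x-\rho_n(x),x+\rho_n(x)]$ the segment $[x,u]$ lies in $\I_{A-1,1/n}$, so (iv) applied with parameters $(A-1,1)$ gives $w(u)\sim w(x)$, and integrating yields
\[
w_n(x)=\rho_n(x)^{-1}\int_{x-\rho_n(x)}^{x+\rho_n(x)} w(u)\,du \sim w(x).
\]

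Finally, for (v)$\,\Rightarrow\,$(i): given $\e>0$ and $x,y$ satisfying the hypotheses of Definition~\ref{def1.1}, set $n=\lceil 1/\e\rceil$; then $\I_{1,\e}\subset \I_{1,1/n}$ and $\rho(\e,x)\sim \rho_n(x)$. By (v), $w(x)\sim w_n(x)$ and $w(y)\sim w_n(y)$, and by the bullet-list property that $w_n(x)\sim w_n(y)$ whenever $|x-y|\leq c_*\rho_n(x)$, we conclude $w_n(x)\sim w_n(y)$, hence $w(x)\sim w(y)$. The main obstacle throughout is (iii)$\,\Rightarrow\,$(iv), where the interplay between $A$, $B$, $n$, and $n^*$ in the definitions of $\I_{A,1/n}$ and $\rho_n$ has to be tracked carefully; the essential quantitative input is the listed inequality $\rho_n(y)^2\leq 4\rho_n(x)(|x-y|+\rho_n(x))$, which both selects $n^*$ and controls the chain.
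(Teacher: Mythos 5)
Your cycle structure, the choice of $n^*$, and the (iii)$\Rightarrow$(iv) chain argument are all essentially sound (though for the chain you actually need the \emph{lower} bound $\rho_{n^*}(t) \geq \rho_{n^*}(x)/(2(2M+1))$ for $|x-t|\leq M\rho_{n^*}(x)$, which comes from the inequality you cite applied with the roles of the two points reversed, not from the direction you quoted). The paper does this step via \prop{auxprop} and the Chebyshev partition; your direct use of the $\rho$-inequality is a reasonable alternative.

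However, your (iv)$\Rightarrow$(v) has a genuine gap. You claim that if $x\in\I_{A,1/n}$ with $A\geq 2$, then $[x,u]\subset\I_{A-1,1/n}$ for every $u\in[x-\rho_n(x),x+\rho_n(x)]$, so that you can integrate over the whole interval. This would require $\rho_n(x)\leq\rho_n(z_j)$, but that can fail badly. For instance, take $z_j=1$, so $\rho_n(z_j)=1/n^2$, and $x=1-A/n^2$, which sits exactly on the boundary of $\I_{A,1/n}$. Then $\varphi(x)\approx\sqrt{2A}/n$, so $\rho_n(x)\approx(\sqrt{2A}+1)/n^2 > \rho_n(z_j)$, and $x+\rho_n(x)$ is at distance roughly $(A-\sqrt{2A}-1)/n^2 < (A-1)/n^2$ from $z_j$ — so it is \emph{not} in $\I_{A-1,1/n}$, no matter how large $A$ is. The "WLOG $A\geq 2$" reduction does not address this, since the ratio $\rho_n(x)/\rho_n(z_j)$ grows with $A$. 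The paper handles this by first taking $n$ large enough that $[x-\rho_n(x),x+\rho_n(x)]$ meets at most one zone $\Z_{A,1/n}^j$ and avoids $\pm 1$, observing that then at least one of the one-sided intervals $[x,x+\rho_n(x)]$ or $[x-\rho_n(x),x]$ stays inside $\I_{A,1/n}$, applying (iv) only on that side, and using the doubling inequality $w([x-\mu,x+\mu])\leq L^2 w([x,x+\mu])$ to control the integral over the other half. You need some analogous one-sided device; as written your argument does not go through.
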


\begin{proof} 
 Clearly,  \ineq{i} $\Rightarrow$ \ineq{ii} (one just needs to pick $\e  = 1/n$), and \ineq{ii} $\Rightarrow$ \ineq{iii}.
 Also, \ineq{iv} $\Rightarrow$ \ineq{i}. Indeed, note that the statement of Definition~\ref{def1.1} becomes vacuous if $\e > \sqrt{2}$ (since $\rho(\e, z_j) > 2$). Hence,   assuming that $\e \leq \sqrt{2}$
  we pick
$n = \lfloor 2/\e \rfloor \in \N$, $A=1$ and $B=4$.   Then $1< n\e \leq 2$, $A \dn(z_j) \leq \rho(\e, z_j)$ and $B \dn(x) \geq \rho(\e, x)$, and so if
$[x,y] \subset \I_{1, \e}$ and $|x-y|\leq \rho(\e, x)$, then $[x,y] \subset \I_{A, 1/n}$ and $|x-y| \leq B \dnx$.

Now, we will show that  \ineq{iii} $\Rightarrow$ \ineq{iv}.
 Let $n\in\N$ and $A, B>0$ be given, and suppose that $x,y$ are such that $[x,y] \subset \I_{A, 1/n}$ and $|x-y| \leq B \dnx$.
 Pick $m:= \max\left\{ \lceil n/\min\{A, 1\} \rceil, N \right\}$ and note that $A \dn(z_j) \geq \dm(z_j)$, and so $\I_{A, 1/n} \subset \I_{1, 1/m}$. Also, it is not difficult to check that
  $m/n \leq \max\left\{ N,  2/\min\{A, 1\}\right\} =: c^*$ and hence
 $\dnx \leq (c^*)^2 \dmx $ which implies that $|x-y| \leq B (c^*)^2 \dmx =: M \dmx$.

 Hence, in order to complete the proof it is sufficient to show that, for any $x,y\in [-1,1]$ such that $|x-y| \leq M \dmx$ there are $K$ points $y_i$, $1\leq i\leq K$, with $K\in\N$ depending only on $M$,  such that
 \[
 [x,y] \subset \cup_{i=1}^{K-1} [y_i, y_{i+1}] \andd |y_i - y_{i+1}| \leq \dm(y_i) , \; 1\leq i \leq K-1 .
 \]
We will use \prop{auxprop}. Let $(x_i)_{i=0}^m$ be the Chebyshev partition of $[-1,1]$ into  $m$ intervals $I_i = [x_i, x_{i-1}]$. Suppose that  $x\in  I_\mu$, $1\leq \mu \leq m$,   denote
\[
I^* := \left\{ 1\leq i \leq m \st I_i \cap [x,y] \neq \emptyset \right\} \andd
I_* := \left\{ 1\leq i \leq m \st I_i \subset [x,y]   \right\},
\]
and let $J:= \cup_{i\in I^*} I_i$.

If $I_* = \emptyset$, then 
$[x,y] \subset I_\mu \cup I_{\mu \pm 1}$, and $J$ consists of at most $2$ intervals $I_i$.
If $I_* \neq \emptyset$, then
recalling that $|I_{i\pm 1}| \leq 3|I_i|$
and  $\dmx \leq |I_i| \leq 5 \dmx$, for any $x\in I_i$,
we conclude
 \[
 |J| \leq    7 \left| \cup_{i\in I_*} I_i \right| \leq 7 |x-y| \leq 7M \dmx \leq 7M |I_\nu| .
\]
\prop{auxprop} implies that $J$ consists of at most $k$ intervals $I_i$, where $k$ depends only on $M$.
We now define $y_j^i := x_j + i|I_j|/5$, $0\leq i\leq 5$, for all $j\in I^*$, and denote
$Y:= (y_i)_{i=1}^K := \cup_{j\in I^*} \{y_j^0, y_j^1, \dots, y_j^5 \}$, where $y_i < y_{i+1}$, $1\leq i \leq K-1$.
Then, $K$ is not bigger than $5k+1$ and depends only on $M$, $[x,y] \subset J = \cup_{i=1}^{K-1} [y_i, y_{i+1}]$, and, for each $1\leq i \leq K-1$,
$|y_i - y_{i+1}| \leq |I_{j(i)}|/5 \leq \dm(y_i)$.

So far, we have verified the equivalence of \ineq{i}-\ineq{iv}.

We will show now that \ineq{iv} $\Rightarrow$ \ineq{v}.

Let $A>0$ and
 suppose that $n\in\N$
is such that
$n > 4(A+1)/\D$.
 This guarantees that
\[
2\dnx < \D -A \left[\dn(z_j)+ \dn(z_{j-1})\right]      \leq z_j - A\dn(z_j) - \left( z_{j-1} + A\dn(z_{j-1}) \right), \quad 1\leq j\leq M+1, \; z_{j-1}\neq z_j ,
\]
and so
 $[x-\dnx, x+\dnx]$ has a nonempty intersection with at most one interval from $\Z_{A, 1/n}$. Moreover,   if $[x-\dnx, x+\dnx]$ does intersect an interval from $\Z_{A, 1/n}$, then it does not contain $\pm 1$.
Hence, if  $x\in \I_{A, 1/n}$, then either
$[x, x+\dnx] \subset \I_{A, 1/n}$ or $[x-\dnx, x]\subset \I_{A, 1/n}$,
and without loss of generality, suppose that it's the former.
Then, taking into account that
\[
w\left( [x-\mu, x+\mu]\right) \leq w\left( [x-\mu, x+2\mu]\right) 
\leq L^2 w\left( [x+\mu/8, x+7\mu/8]\right) \leq L^2 w\left( [x , x+\mu]\right) ,
\]
we have
 \[
 w_n(x) = {1 \over \dnx} \int_{x-\dnx}^{x+\dnx} w(u) du \leq {L^2 \over \dnx} \int_{x}^{x+\dnx} w(u) du  \leq  L^2 c_*^{-1} w(x)
 \]
and
 \[
 w_n(x) \geq {1 \over \dnx} \int_{x}^{x+\dnx} w(u) du \geq  c_*  w(x) ,
\]
where $c_*$ depends only on $w$ and $A$.

Hence, \ineq{v} is proved for all $n\in\N$ such that $n > 4(A+1)/\D$.
If $1\leq n \leq N:= \lceil 4(A+1)/\D \rceil$, then we use the fact that \ineq{v} is valid for $n = N+1$, 
$\I_{A, 1/n} \subset  \I_{A, 1/(N+1)}$ and
  that
  $w_n(x) \sim w_{N+1}(x)$ with equivalence constants depending only on $w$ and $N$, to conclude that
  \[
  w(x) \sim w_{N+1}(x) \sim w_n(x) , \quad x \in \I_{A, 1/n} .
  \]

To prove \ineq{v} $\Rightarrow$ \ineq{iv}, we note that
it follows from the doubling condition that, if $x,y \in [-1,1]$ and  $|x-y|\leq B\dnx$, then $w_n(x) \sim w_n(y)$
with equivalence constants   depending only on $B$ and the doubling constant of $w$. Hence, if \ineq{v} is valid and $x,y$ are such that $[x,y] \subset \I_{A, 1/n}$ and $|x-y| \leq B \dnx$, then
\[
w(y) \sim w_n(y) \sim w_n(x) \sim w(x)
\]
with equivalence constants depending on $A$, $B$ and the weight $w$. This verifies \ineq{iv}.
\end{proof}

 \begin{remark}
 We note that if a doubling weight $w$ is in the class $\W(\Z)$ then, in particular,  it is bounded away from zero and $\infty$ when $x$ is ``far'' from $\Z$.
 In other words,
 \[
 \forall \e>0   \; \exists \delta_\e>0 : \delta_\e <  w(x) < \delta_\e^{-1}, \quad \text{for all $x$ such that }\; \dist(x, \Z) \geq \e .
 \]
 This follows from \lemp{iv} if we pick $n = \lceil 2/\e \rceil$, $A=1$ and $B = 2n^2$.
\end{remark}

We will now  show  that  if a doubling weight $w$ is monotone near points from $\Z$ and is bounded away from zero and infinity on the rest of the interval $[-1,1]$ then it is in the class $\W(\Z)$.

We use the usual notation $f_+(a) := \lim_{x \to a^+} f(x)$ and $f_-(a) := \lim_{x \to a^-} f(x)$.


 \begin{lemma} \label{lem1.6}
Let $w$ be a doubling weight, and suppose that there exists $0< \a < \D/4$ such that $w$ is monotone on $(z_j - \a, z_j) \cap[-1,1]$ and on $(z_j, z_j+\a)\cap[-1,1]$ for all $1\leq j\leq M$, and suppose that
$\mu_*>0$ and $\mu^* < \infty$, where
\[
\mu_*  := \min \left\{ \inf_{x\in S_\a} w(x)  ,   \min_{1\leq j \leq M} \left\{ w_-( z_j+\a ),  w_+(z_j-\a)  \right\} \right\}
\]
and
\[
\mu^*  := \max \left\{ \sup_{x\in S_\a} w(x)  ,   \max_{1\leq j \leq M} \left\{ w_-( z_j+\a),  w_+(z_j-\a)  \right\} \right\}
\]
where $S_\a := \left\{ x \in [-1,1] \st \dist(x, \Z) \geq \a \right\}$.

  Then $w$ belongs to the class $\W(\Z)$.
\end{lemma}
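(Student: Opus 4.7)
The plan is to verify condition \ineq{iii} of \lem{properties}: I would choose $N = N(w)$ large enough that $\dnx \leq 2/n < \a/8$ for all $n \geq N$ and $x \in [-1,1]$, and then show that \ineq{nochange} holds for any such $n$ and any $x, y \in [-1,1]$ with $[x,y] \subset \I_{1, 1/n}$ and $|x-y| \leq \dnx$, with $c_*$ depending only on $w$. Since $|x-y| < \a/8$ and $\a < \D/4$, the interval $[x,y]$ lies within distance $\a$ of at most one point of $\Z$. If $\dist([x,y], \Z) \geq \a$, then $[x,y] \subset S_\a$ and $w(x), w(y) \in [\mu_*, \mu^*]$, already yielding \ineq{nochange}. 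Otherwise $\dist([x,y], z_j) < \a$ for a unique $j$, and since $\dist([x,y], z_j) \geq \dn(z_j) > 0$ the interval $[x,y]$ lies entirely on one side of $z_j$; I assume (without loss of generality, the opposite side being symmetric) that it is the right side, so that $z_j < x \leq y < z_j + 9\a/8$.

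The remainder reduces to the following \emph{sublemma}: if $x \leq y$ lie in $(z_j, z_j + \a]$ with $x - z_j \geq \dn(z_j)$ and $y - x \leq \dnx$, then $w(x) \sim w(y)$ with constants depending only on $w$. If $y \leq z_j + \a$ the sublemma applies directly. If instead $y > z_j + \a$, then $x \in (z_j + 7\a/8, z_j + \a)$ and $y \in (z_j + \a, z_j + 9\a/8)$; the nearest other point of $\Z$ lies at distance at least $\D - 9\a/8 > \a$, so $y \in S_\a$ and $w(y) \in [\mu_*, \mu^*]$. Since $z_j + \a - x \leq y - x \leq \dnx$, the sublemma applied to $(x, x')$ for $x' \in (x, z_j + \a)$, together with the one-sided limit $x' \to (z_j + \a)^-$, gives $w(x) \sim w_-(z_j + \a) \in [\mu_*, \mu^*]$, whence $w(x) \sim w(y)$.

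To prove the sublemma, I would first establish the geometric bound $t_2/t_1 \leq C_0(w)$, with $t_1 := x - z_j$ and $t_2 := y - z_j$. For $z_j \in (-1,1)$ this follows from $\dnx \leq 2 \varphi(z_j)^{-1} \dn(z_j) \leq 2\varphi(z_j)^{-1} t_1$; for $z_j = \pm 1$ (only one side arises) the inequality $\varphi(x)^2 \leq 2|x - z_j| = 2 t_1$ together with $t_1 \geq 1/n^2$ yields $\dnx \leq (1/n)\sqrt{2 t_1} + 1/n^2 \leq (\sqrt{2} + 1) t_1$. Having fixed $C_0$, I would iterate the doubling condition $w([z_j + \sigma/2, z_j + 5\sigma/2]) \leq L w([z_j + \sigma, z_j + 2\sigma])$ along a finite geometric sequence of parameters $\sigma$ joining $t_1$ to $t_2$, using the monotonicity of $w$ on $(z_j, z_j + \a)$ to sandwich each average $w([z_j + \sigma, z_j + 2\sigma])/\sigma$ between the one-sided values $w_+(z_j + \sigma)$ and $w_-(z_j + 2\sigma)$.

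The main obstacle is precisely this last step, namely promoting integral doubling estimates to pointwise ratios. The bridge is the standard fact (itself a consequence of doubling applied to a monotone weight) that interior jumps $w_-(a)/w_+(a)$ are bounded by a constant multiple of $L$, so that on each dyadic scale the average and any pointwise value of $w$ are comparable. Chaining these comparisons across the $O(\log C_0)$ scales between $t_1$ and $t_2$ then yields the sublemma with constants depending only on $L$ and $C_0$, and hence only on $w$.
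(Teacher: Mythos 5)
Your reduction is sound and follows the paper's general strategy (verify \lemp{iii} for large $n$, split according to the distance from $\Z$, use $\mu_*,\mu^*$ away from $\Z$, note one-sidedness and the bound $t_2/t_1\leq C_0$ near a single $z_j$), but the proof of your sublemma has a genuine gap at the top of the monotonicity window. Your chaining sandwiches pointwise values between averages over $[z_j+\sigma,z_j+2\sigma]$ at adjacent dyadic scales, and (say for $w$ nondecreasing on $(z_j,z_j+\a)$) the upper bound on $w(y)$ requires an interval to the right of $y$ that still lies inside the monotonicity window; this fails as soon as $t_2=y-z_j>\a/2$, a regime your sublemma explicitly allows since it only assumes $y\in(z_j,z_j+\a]$. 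This is not a removable technicality: the sublemma with constants ``depending only on $L$ and $C_0$'' is false. Take, e.g., $M=1$, $z_1=0$, $\a=1/8$ and $w(t)=|t-(z_1+\a)|^{-1/2}$ on $[-1,1]$: this is a doubling weight, monotone on $(z_1-\a,z_1)$ and on $(z_1,z_1+\a)$, and for $y=z_1+\a-\epsilon$, $x=y-\dnx$ all geometric hypotheses of the sublemma hold (with $C_0$ bounded), while $w(y)/w(x)=\left((\epsilon+\dnx)/\epsilon\right)^{1/2}\to\infty$ as $\epsilon\to0^+$. Of course this $w$ violates the hypotheses of the lemma ($w_-(z_1+\a)=\infty$, $\sup_{S_\a}w=\infty$), but that is exactly the point: near $z_j+\a$ the comparison cannot come from doubling plus monotonicity plus the geometric bound alone; the quantities $\mu_*,\mu^*$ (equivalently the one-sided limits at $z_j\pm\a$ and the bounds on $S_\a$) must enter the estimate, and your sketch never uses them inside the sublemma. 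The ``bounded jump'' fact you cite is true, but it does not supply the missing upper sandwich.

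For comparison, the paper avoids this regime by a buffer: using the one-sided limits it chooses $\e>0$ so that $\mu_*/2\leq w\leq 2\mu^*$ on the enlarged set $S_{\a-\e}$, takes $N$ with $\dnx\leq\e/2$, and splits cases at $\a-\e$ rather than at $\a$. If $[x,y]\subset S_{\a-\e}$, the two-sided bound finishes; otherwise $[x,y]$ stays at distance $\geq\e/2-\dnx/6>0$ from every $z_i\pm\a$ and (using $[x,y]\subset\I_{1,1/n}$) at distance $>\dnx/6$ from $\Z$, so the slightly enlarged interval $[x-\dnx/6,\,y+\dnx/6]$ lies inside one monotonicity interval; then a single application of the doubling-weight property $w([x-\dnx/6,x])\sim w([y,y+\dnx/6])$ combined with monotonicity yields \ineq{nochange} directly, with no dyadic chaining, no comparability of the distances to $z_j$, and no jump estimates. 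Your argument can be repaired in the same spirit (split at $\a-\e$, or in the regime $t_2>\a/2$ control $w(y)$ via $w_\pm(z_j+\a)$ and $\mu^*$ and lower-bound the averages near $x$ against an interval of length $\sim\a$ contained in $S_\a$ using finitely many doublings), but as written the sublemma step is where the proof breaks.
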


We use the convention that if a quantity is not defined then it is not present in the set whose minimum or maximum is   taken. Thus, for example, if $z_1 = -1$, then
$w_-(-1-\a)$ is excluded from the definition of $\mu_*$ and $\mu^*$ in the statement of the lemma since this quantity is not defined.

\begin{proof}
For each $1\leq i\leq M$, there exists $\e_i >0$ such that
\[
\mu_*/2 \leq  w(x) \leq 2 \mu^* , \quad \mbox{\rm for all }\;   x\in \left([z_i + \a - \e_i, z_i + \a] \cup [z_i - \a,   z_i -\a + \e_i]  \right) \cap [-1,1] .
\]
We let
$\e := \min\left\{ \a/2, \min_{1\leq j \leq M} \e_i \right\}$ and $N :=  \lceil 4/\e \rceil$.
Note that   $N$ depends only on the weight $w$, and that the inequality $\dnx \leq \e/2$  is satisfied for all $x\in [-1,1]$ and   all $n\geq N$.
Recalling that
\[
S_{\a-\e} = \left\{ x \in [-1,1] \st \dist(x, \Z) \geq \a-\e \right\}
\]
 we also note that $\mu_*/2 \leq  w(x) \leq 2 \mu^*$, for all $x\in S_{\a-\e}$.

Now, let $n\geq N$ and let $x,y$ be such that $[x,y] \subset \I_{1, 1/n}$ and $|x-y| \leq \dnx$. We will show that \lemp{iii} is valid which implies that $w$ is in the class $\W(\Z)$.
We have the following cases to consider (for convenience, suppose that $x<y$):
\begin{enumerate}[(a)]
\item \label{a} $[x,y] \subset S_{\a-\e}$,
\item \label{b} $[x,y] \cap \left( [-1,1] \setminus S_{\a-\e} \right) \neq \emptyset$.
\end{enumerate}

\medskip\noindent
{\bf Case \ineq{a}}:  $ \mu_*/2 \leq w(x), w(y) \leq 2 \mu^*$, and so \ineq{nochange} is satisfied with $c_* = \mu_*/(4\mu^*)$.

\medskip\noindent
{\bf Case \ineq{b}}:
Let $I_x := [x-\dnx/6, x]$ and $I_y := [y, y+\dnx/6]$ and note that
$I_{x,y} := [x-\dnx/6,  y+\dnx/6]$ is such that  $I_{x,y} \cap \{\z_i \pm \a\}_{i=1}^M = \emptyset$ since
\[
\dist(I_{x,y}, \{\z_i \pm \a\}_{i=1}^M) \geq \dist([x,y], \{\z_i \pm \a\}_{i=1}^M) - \dnx/6 \geq    \e/2 - \dnx/6  \geq \e/2 - \e/12 > 0.
\]

Additionally,
 $I_{x,y} \cap \Z = \emptyset$.
Indeed, recalling that $\dn(v)^2 \leq 4 \dn(u) \left( |v-u|+ \dn(u)\right)$, for all $u, v\in [-1,1]$,   letting $u=z_j$, $1\leq j\leq M$, and $v=x$ and noting that
     $|x-z_j| \geq \dn(z_j)$ because $x\in \I_{1,1/n}$, we have
\[
\dnx^2 \leq 4 \dn(z_j) \left( |x-z_j|+ \dn(z_j)\right) \leq 8 |x-z_j|^2 ,
\]
and so $|x-z_j| > \dnx/3$ for all $1\leq j\leq M$.

Also, taking into account that $y\in \I_{1,1/n}$ which implies $|y-z_j| \geq \dn(z_j)$ we have
\[
\dnx^2 
\leq  4 |y-z_j| \left( |x-z_j|+ |y-z_j|\right) \leq 4 |y-z_j| \left( |x-y|+ 2|y-z_j|\right)
\leq 4 |y-z_j| \left( \dnx + 2|y-z_j|\right),
\]
which implies that $\dnx < 6 |y-z_j|$ for all $1\leq j\leq M$.

Therefore, $I_{x,y} \cap  \{ z_i,  z_i \pm \a\}_{i=1}^M = \emptyset$, and so $w$ is monotone on $I_{x,y}$.

It  follows from the properties of doubling weights (see \cite[Lemma 2.1]{mt2000}, for example) that
$c_0 w(I_x) \leq w(I_y) \leq c_0^{-1} w(I_x)$ (since $|I_x| = |I_y| \sim |I_{x,y}|$) with the constant $c_0$ depending only on $w$.

Now, if  $w$ is nondecreasing on $I_{x,y}$, then
\[
w(x) \leq w(y) \leq 6 w(I_y)/\dnx \leq 6 c_0^{-1} w(I_x)/\dnx \leq  c_0^{-1} w(x) ,
\]
and if $w$ is nonincreasing on $I_{x,y}$, then
\[
w(y) \leq w(x)\leq 6 w(I_x)/\dnx \leq 6 c_0^{-1} w(I_y)/\dnx \leq  c_0^{-1} w(y) .
\]
This verifies \lemp{iii}, and the proof is now complete.

\end{proof}

\begin{corollary} \label{cor1.5}
Let $w$ be a doubling weight, and suppose that $w$ is piecewise monotone with finitely many monotonicity intervals, \ie let $T:=(t_i)_{i=0}^K$, $K\in \N$, be such that
$-1=t_0 <t_1 < \dots < t_{K-1} < t_K = 1$ and $w$ is monotone on each interval $(t_i, t_{i+1})$, $0\leq i \leq K-1$. Moreover, assume that
$\mu^* <\infty$ and $\mu_* > 0$, where
\[
\mu^* := \max\left\{ w(t_i), w_{\pm}(t_i) \st 0\leq i \leq K , \; t_i \not\in \Z\right\} \andd \mu_* := \min\left\{ w(t_i), w_{\pm}(t_i) \st 0\leq i \leq K , \; t_i \not\in \Z\right\}
\]
(with the convention that $\max\{\emptyset\} = \min\{\emptyset\} :=1$,  $w_{-}(-1):= w(-1)$ and $w_+(1):= w(1)$).   Then $w$ belongs to the class $\W(\Z)$.
\end{corollary}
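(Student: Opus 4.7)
The natural approach is to reduce the claim to \lem{lem1.6} by exhibiting a single $\alpha>0$ for which all of that lemma's hypotheses are verified.

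First I would choose $\alpha>0$ smaller than both $\D/4$ and the smallest positive distance between a point of $\Z$ and a point of $T$ different from it. Finiteness of $\Z$ and $T$ makes this possible, and with this choice both one-sided neighborhoods $(z_j-\alpha,z_j)\cap[-1,1]$ and $(z_j,z_j+\alpha)\cap[-1,1]$ are contained in single monotonicity intervals of $w$. Hence the monotonicity hypothesis of \lem{lem1.6} is automatic.

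Next I would verify that the quantities $\mu_*$ and $\mu^*$ defined in \lem{lem1.6} are, respectively, strictly positive and finite. The set $S_\alpha$ decomposes into finitely many closed intervals, each further partitioned by the elements of $T\cap(z_j,z_{j+1})$ into monotonicity sub-intervals of $w$. On each such sub-interval, $w$ is trapped between its one-sided limits at the endpoints; at every internal breakpoint $t_i\in T\setminus\Z$ these limits lie in $[\mu_*,\mu^*]$ (in the sense of the corollary). Thus the only endpoints needing extra care are the ``outer'' boundaries $z_j\pm\alpha$ of $S_\alpha$.

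Controlling $w_\pm(z_j\pm\alpha)$ is the main obstacle, and it will be handled by combining monotonicity, $\L_1$-integrability, and the doubling condition. For the upper bound, since $w$ is monotone on the short neighborhood $(z_j,z_j+\alpha)$ on which the one-sided limit is taken, a routine integral estimate (comparing the value $w_-(z_j+\alpha)$ with the integral of $w$ over $(z_j+\alpha/2,z_j+\alpha)$ in the increasing case, and symmetrically otherwise) yields $w_\pm(z_j\pm\alpha)\leq C\|w\|_{\L_1[-1,1]}/\alpha<\infty$. For strict positivity, I would use that any doubling weight $w$ assigns positive measure to every subinterval of $[-1,1]$: otherwise iterating $w(2I)\leq Lw(I)$ forces $w([-1,1])=0$. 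Combined with monotonicity on $(z_j,z_j+\alpha)$, this shows that $w_-(z_j+\alpha)=0$ would imply $w\equiv 0$ a.e.\ on that sub-interval, a contradiction; hence $w_\pm(z_j\pm\alpha)>0$ for all $j$, and the minimum over the finitely many such values provides the required uniform positive lower bound. \lem{lem1.6} then yields $w\in\W(\Z)$.
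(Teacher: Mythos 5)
Your overall strategy is exactly the one the paper intends: Corollary~\ref{cor1.5} is stated without proof, as an immediate consequence of Lemma~\ref{lem1.6}, and you reduce to that lemma by producing a suitable $\alpha$. The choice of $\alpha$ (smaller than $\D/4$ and than every positive distance $|z_j - t_i|$ with $t_i \neq z_j$) is right, and with it the monotonicity hypothesis of Lemma~\ref{lem1.6} follows and the decomposition of $S_\alpha$ into monotonicity sub-intervals whose internal $T$-endpoints lie in $T\setminus\Z$ is correctly observed. Since Lemma~\ref{lem1.6} only requires its own $\mu_*$ and $\mu^*$ to be positive and finite (not comparable to the corollary's constants), the only genuinely new quantities to control are indeed $w_\pm(z_j\pm\alpha)$.

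However, the integral comparison used for the upper bound has the cases swapped. If $w$ is \emph{increasing} on the monotonicity interval containing $(z_j, z_j+\alpha)$, then $w \leq w_-(z_j+\alpha)$ on $(z_j+\alpha/2, z_j+\alpha)$, so comparing with $\int_{z_j+\alpha/2}^{z_j+\alpha} w$ yields a \emph{lower}, not an upper, bound on $w_-(z_j+\alpha)$. The upper bound in the increasing case comes from integrating to the \emph{right} of $z_j+\alpha$ (which is legitimate because, by your choice of $\alpha$, the monotonicity interval extends strictly beyond $z_j+\alpha$); the comparison you wrote is the correct one for the \emph{decreasing} case. Equivalently, and more simply: since $z_j+\alpha$ is strictly interior to a monotonicity interval of an integrable nonnegative function, both one-sided limits there are automatically finite, with a bound $C\|w\|_{\L_1}/\dist(z_j+\alpha,\partial)$. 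The same case-swap occurs in the positivity argument: if $w$ is decreasing, $w_-(z_j+\alpha)=0$ forces $w\equiv 0$ on the sub-interval to the \emph{right} of $z_j+\alpha$, not on $(z_j,z_j+\alpha)$, after which the doubling argument applies as you describe. These are bookkeeping slips rather than gaps in the method; once the directions are corrected, the proof is complete and matches the intended derivation from Lemma~\ref{lem1.6}.
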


Taking into account characterization of monotone doubling weights (see \eg \cite{cu}) and \lem{lem1.6}, it is now relatively easy to check that many well known weights are not only doubling but  are also in $\W(\Z)$ for some $\Z$.

\begin{example} The following are examples of doubling weights from $\W(\Z)$ with $\Z = (z_i)_{i=1}^M$, $-1   \leq   z_1 < \dots < z_{M-1} < z_M \leq  1$.
\begin{itemize}
\item  Classical Jacobi weights:
\[
w(x) = (1+x)^\a (1-x)^\b , \quad \a,\b > -1,  \quad \mbox{\rm with  $M=2$, $z_1 = -1$ and  $z_2= 1$.}
\]

\item
Generalized Jacobi weights:
\[
w(x) = \prod_{j=1}^M |x-z_j|^{\gamma_j}, \quad \gamma_j > -1  .
\]

\item   Generalized DT weights  (see \eg in \cite[p. 134]{cm}):
\[
w(x) = \prod_{j=1}^M |x-z_j|^{\gamma_j} \left(\ln {e \over |x-z_j|} \right)^{\Gamma_j}      , \quad \gamma_j >-1, \; \Gamma_j \in\R .
\]
(Note that if these weights are   defined with $\gamma_j = -1$, $\Gamma_j <-1$, for some $j$'s,  then they will be in $\L_1$ but will not be doubling. For example, $w(x) = |x|^{-1} (1-\ln|x|)^{-2}$ is not doubling since, for example, for sufficiently small $t>0$,
$w([0,t]) \sim (1-\ln t)^{-1}$ and $w([t,2t]) \sim (1-\ln t)^{-1} (1-\ln(2t))^{-1}$ and so $w([0,t])/w([0, 2t]) \to \infty$ as $t \to 0^+$, which cannot happen for doubling weights.)
 \end{itemize}
\end{example}

\begin{remark}
Of course, there are doubling weights which are not in any $\W(\Z)$ classes. Doubling weights that vanish on a set of positive measure (see \cite[Chapter I, Section 8.8]{stein} for an example) is an illustration of this.
Also, there are doubling weights which are not $A_\infty$ weights and which do not vanish anywhere (see \cites{fm, mt2000}), and one can use the same construction for any $\Z$ to build a doubling weight $w$ which will not be in $\W(\Z)$.
\end{remark}

\sect{Moduli of smoothness}\label{moduli}

As usual, for $r\in\N$, let
\[\Delta_h^r(f,x, S):=\left\{
\begin{array}{ll} \ds
\sum_{i=0}^r  {r \choose i}
(-1)^{r-i} f(x-rh/2+ih),&\mbox{\rm if }\, [x-rh/2, x+rh/2]  \subset S \,,\\
0,&\mbox{\rm otherwise},
\end{array}\right.\]
be  the $r$th symmetric difference. Note that $S$ can be a union of (disjoint) intervals. Also, let    $\Delta_h^r(f,x) := \Delta_h^r(f,x, [-1,1])$.

{\em Main part weighted modulus of smoothness} is defined as
 \begin{eqnarray*}
 \Omega_\varphi^r(f, A, t)_{p, w} &:= & \sup_{0<h\leq t} \norm{\Delta_{h\varphi(\cdot)}^r(f, \cdot, \I_{A, h})}{\Lp(\I_{A, h}),w} .
\end{eqnarray*}

Note that, for small $A$ and $h$,  $\I_{A, h}$ consists of $M-1$, $M$ or $M+1$ intervals depending on whether or not $w$ has a zero/singularity at $\pm 1$.


It is clear  that moduli $\Omega_\varphi^r$ are not sufficient to characterize smoothness of functions (the main part weighted modulus is obviously zero for any piecewise constant function $f$ with jump points at $\Z$), and we define the (complete) {\em weighted modulus of smoothness } as
 \begin{eqnarray} \label{complete}
 \w_\varphi^r(f, A,   t)_{p, w}  :=  \Omega_\varphi^r(f, A, t)_{p, w}  + \sum_{j=1}^M  E_r(f)_{\Lp(\Z_{2A,t}^j), w}  , 
\end{eqnarray}
where
\[
E_r(f)_{\Lp(I), w} :=    \inf_{q  \in\Poly_r} \norm{f-q }{\Lp(I), w}
\]
(see \eg \cite[Chapter 11]{dt} and \cites{mt2001, dbmr}  for similar definitions).
Note that these moduli can be defined as $\w_\varphi^r(f, A, B,  t)_{p, w}$ with $2A$ in the sets $\Z_{2A,t}^j$ replaced by $B$.
It is possible to show that $\w_\varphi^r(f, A, B,  t)_{p, w}$ are equivalent for different $A$ and $B$ provided $B>A$ and $t$ is small (if $0<p<1$), and we
did not investigate the question of equivalence of these moduli in the  case $B\leq A$.
 It will be shown in Section~\ref{real} that moduli \ineq{complete} (as well as the averaged moduli \ineq{aver} defined below)  are equivalent for all positive $A$ and all $t>0$ (if $1\leq p <\infty$) or $0<t<t_0$, for some $t_0>0$ (if $0<p<1$). Note, however, that we cannot use this equivalence in the proof of the direct theorem (which would simplify it considerably) since we derive it as a corollary of several results, the direct theorem being one of them.

We   define  the  {\em averaged main part weighted modulus} and the (complete) {\em  averaged weighted modulus of smoothness}, respectively, as
 \begin{eqnarray*}
\widetilde\Omega_\varphi^r(f, A, t)_{p, w} &:=&  \left(\frac{1}{t}  \int_0^{t}  \int_{\I_{A,h}}       w (x) |\Delta_{h\varphi(x)}^r(f, x, \I_{A,h})|^p dx dh \right)^{1/p}\\
&=&  \left(\frac{1}{t}  \int_0^{t}  \norm{ \Delta_{h\varphi(\cdot)}^r(f, \cdot, \I_{A, h})}{\Lp(\I_{A, h}), w}^p  dh \right)^{1/p}
\end{eqnarray*}
and
 \begin{eqnarray} \label{aver}
 \widetilde \w_\varphi^r(f, A,   t)_{p, w}  :=  \widetilde\Omega_\varphi^r(f, A, t)_{p, w}  + \sum_{j=1}^M  E_r(f)_{\Lp(\Z_{2A,t}^j), w} . 
\end{eqnarray}

The following properties of these moduli immediately follow from the definition:
\begin{enumerate}[(i)]
\item   $\ds \widetilde\Omega_\varphi^r(f, A, t)_{p, w} \leq  \Omega_\varphi^r(f, A, t)_{p, w}$ and $\widetilde \w_\varphi^r(f, A,  t)_{p, w} \leq  \w_\varphi^r(f, A, t)_{p, w}$,
 \item
$\ds  \Omega_\varphi^r(f, A, t_2)_{p, w} \leq  \Omega_\varphi^r(f, A, t_1)_{p, w}$ and    $\ds \w_\varphi^r(f, A,   t_2)_{p, w} \leq \w_\varphi^r(f, A,   t_1)_{p, w}$            if $t_1 \geq t_2$,
\item
$\ds  \widetilde \Omega_\varphi^r(f, A,  t_2)_{p, w}  \leq (t_1/t_2)^{1/p} \, \widetilde \Omega_\varphi^r(f, A,   t_1)_{p, w}$  and
$\ds  \widetilde \w_\varphi^r(f, A,   t_2)_{p, w}  \leq (t_1/t_2)^{1/p} \, \widetilde \w_\varphi^r(f, A,   t_1)_{p, w}$
if $t_1 \geq t_2$,
\item
$\ds \Omega_\varphi^r(f, A_1, t)_{p, w} \leq  \Omega_\varphi^r(f, A_2, t)_{p, w}$ and
$\ds \widetilde \Omega_\varphi^r(f, A_1, t)_{p, w} \leq  \widetilde\Omega_\varphi^r(f, A_2, t)_{p, w}$
if $A_1 \geq A_2$
(since $\I_{A_1, h} \subset \I_{A_2, h}$).
\end{enumerate}

We will also need the following auxiliary quantity (``restricted averaged main part modulus'' would be a proper name for it) which will be quite helpful in our estimates:
\[
\widetilde\Omega_\varphi^r(f,  t)_{\Lp(S), w}  :=   \left(\frac{1}{t}  \int_0^{t}  \int_{S}   w (x) |\Delta_{h\varphi(x)}^r(f, x, S)|^p dx dh \right)^{1/p} ,
\]
where $S$ is some subset (a union of intervals) of $[-1,1]$ (that does not depend on $h$).


Note that
\[
\widetilde\Omega_\varphi^r(f,  t)_{\Lp(\I_{A,t}), w}  \leq \widetilde\Omega_\varphi^r(f, A, t)_{p, w} .
\]

We also remark that since $\I_{A, h}$ consists of a number of disjoint intervals when $h$ is small, it is possible to define a main part modulus taking supremum on each of these intervals. In other words, one can define
\[
\Omega_\varphi^{*r}(f, A, t)_{p, w}  :=   \sum_{j=0}^{M} \sup_{0<h\leq t} \norm{\Delta_{h\varphi}^r(f)}{\Lp(J_{A,h}^j) ,w} ,
\]
where $z_0:=-1$, $z_{M+1}:=1$, and $J_{A,h}^j$'s denote components of $\I_{A, h}$, \ie
\[
J_{A,h}^j :=
\begin{cases}
\left[z_{j}+A \rho(h,z_{j}), z_{j+1}-A \rho(h,z_{j+1})\right] , & \mbox{\rm if $1\leq j \leq M-1$},\\
\left[-1, z_1-A \rho(h,z_j)\right] , & \mbox{\rm if $j=0$ and $z_1 \neq -1$},\\
\left[z_M+A \rho(h,z_j), 1\right] , & \mbox{\rm if $j=M$ and $z_M \neq 1$}.
\end{cases}
\]
It is obvious that $\Omega_\varphi^{r}(f, A, t)_{p, w} \leq \Omega_\varphi^{*r}(f, A, t)_{p, w}$, and it is less obvious that this inequality can be reversed for any $f\in \Lp^w$, $0<p<\infty$. Hence, we note that
 $\Omega_\varphi^{*r}$ could replace $\Omega_\varphi^{r}$  everywhere in the proofs below, and so using Corollaries~\ref{corr1} and \ref{corr2} we could actually show that these moduli are equivalent (in the case $0<p<1$,  $t$ would have to be small). However, we are not discussing this further. 

%
%
%
%
%
%

 \sect{Degree of local approximation} \label{locapp}

\begin{proposition} \label{auxprop}
Let $n\in\N$ and suppose that,  for some $1\leq \mu\leq n$,   $I_\mu \subset J$, where $J\subset [-1,1]$ is an interval such that    $|J | \leq c_0 |I_\mu|$. Then there exists  $m\in\N$ depending only on $c_0$ (and independent of $n$) such that $J$   has a nonempty intersection with at most $m$ intervals $I_i$, $1\leq i\leq n$.
\end{proposition}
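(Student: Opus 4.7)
The strategy is to show that every Chebyshev interval $I_i$ meeting $J$ has length comparable to $|I_\mu|$, with constants depending only on $c_0$; once this is established, a simple packing argument bounds the number of such intervals. The only analytic input is the quasi-metric estimate $\rho_n(y)^2 \leq 4\rho_n(x)(|x-y|+\rho_n(x))$ listed among the basic facts about the Chebyshev partition, together with the bracketing $\rho_n(u) \leq |I_i| \leq 5\rho_n(u)$ for $u \in I_i$.

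Fix $x \in I_\mu$, so that $|I_\mu| \leq 5\rho_n(x)$. For any $y \in J$, since $I_\mu \subset J$ we have $|x-y| \leq |J| \leq c_0|I_\mu| \leq 5c_0 \rho_n(x)$. Plugging into the quasi-metric estimate immediately yields $\rho_n(y) \leq C_1 \rho_n(x)$ with $C_1 = 2\sqrt{5c_0+1}$. For the reverse direction, apply the same estimate with the roles of $x$ and $y$ exchanged and substitute $|x-y| \leq 5c_0 \rho_n(x)$; the resulting quadratic inequality in $\rho_n(x)$ (treating $\rho_n(y)$ as a parameter) can be solved to give $\rho_n(x) \leq C_2 \rho_n(y)$ for a constant $C_2$ depending only on $c_0$. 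Thus $\rho_n(x) \sim \rho_n(y)$ uniformly in $y \in J$.

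Now, if $I_i \cap J \neq \emptyset$ and $y_i \in I_i \cap J$, the bracketing $\rho_n(y_i) \leq |I_i| \leq 5 \rho_n(y_i)$ together with the comparability above yields $|I_i| \sim |I_\mu|$ with constants depending only on $c_0$. The intersecting intervals have pairwise disjoint interiors and their union lies in a single interval of length at most $|J| + 2 \max_i |I_i|$, which is $O(|I_\mu|)$. Dividing this total length by the uniform lower bound $|I_i| \gtrsim |I_\mu|$ bounds the number of intersecting intervals by a constant depending only on $c_0$, providing the required $m$.

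The only delicate step is the reverse comparability of $\rho_n$ across $J$: the quasi-metric estimate yields the direction $\rho_n(y) \leq C \rho_n(x)$ by a single direct substitution, but deducing $\rho_n(x) \leq C' \rho_n(y)$ requires solving a quadratic rather than a linear inequality in $\rho_n(x)$. Once both directions are in hand the rest is essentially counting.
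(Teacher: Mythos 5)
Your proof is correct, but it follows a genuinely different route from the paper. The paper works directly with the trigonometric form of the Chebyshev nodes: writing $x_i=\cos(i\pi/n)$ and using $x/10\leq\sin x\leq x$ on $[0,7\pi/8]$, it shows $|x_i-x_\mu|/|I_\mu|\geq |i-\mu|/200$, so any node $x_i\in J$ must satisfy $|i-\mu|\leq 200c_0$, which immediately yields the explicit bound $m\leq 400c_0+2$. You instead avoid the explicit node formula entirely and argue through the quasi-metric inequality $\rho_n(y)^2\leq 4\rho_n(x)\left(|x-y|+\rho_n(x)\right)$ together with $\rho_n(u)\leq|I_i|\leq 5\rho_n(u)$ for $u\in I_i$: the forward comparison is a direct substitution, the reverse one comes from solving the quadratic inequality $t^2\leq 20c_0t+4$ in $t=\rho_n(x)/\rho_n(y)$, and then a packing count of pairwise non-overlapping intervals of length $\gtrsim|I_\mu|$ inside an interval of length $O(|I_\mu|)$ finishes the argument. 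Both steps are sound, the constants depend only on $c_0$, and you correctly rely only on the two listed facts that are independent of the proposition (not on the bullet item that is itself deduced from it), so there is no circularity. The trade-off: the paper's computation is more elementary and produces an explicit value of $m$, while your argument is more structural and would transfer to any partition whose mesh is governed by a function with the same doubling-type quasi-metric property, at the cost of less explicit constants.
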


\begin{proof} If $n=1$, the statement is obvious, and so we assume that $n\geq 2$. Because of symmetry, we may assume that $1\leq \mu \leq \lceil n/2 \rceil$. Now let $1\leq i\leq n$, and compare the distance from $x_i$ to $x_\mu$ to the length of the interval $I_\mu$.
Using the estimates $x/10 \leq \sin x \leq x $, $0\leq x \leq 7\pi/8$,    we have
\begin{eqnarray*}
{|x_i - x_\mu| \over |I_\mu|}  & = &  { \sin\left[ (i+\mu)\pi/(2n)\right]  \sin\left[ |i-\mu|\pi/(2n)\right]  \over   \sin\left[ (2\mu-1)\pi/(2n)\right]  \sin\left[  \pi/(2n)\right] }    \geq {|i^2 - \mu^2| \over 100(2\mu-1)}
  \geq {|i - \mu| \over 200} .
\end{eqnarray*}
If $x_i \in J$, then $|x_i - x_\mu| \leq |J| \leq c_0 |I_\mu|$ and so $|i-\mu| \leq 200 c_0$. This implies that $J$ has empty intersection with all intervals $I_i$ such that
$\min\{ |i-\mu|, |i-1-\mu| \} > 200 c_0$, and so the number of intervals $I_i$ having nonempty intersections with $J$ is $m \leq 400 c_0+2$.
\end{proof}

Recall now that $\w_r(f, t, I)_p := \sup_{0<h\leq t} \norm{\Delta_h^r(f, x, I)}{\Lp(I)}$ is the usual $r$th modulus of smoothness on an interval $I$, and that the well-known Whitney's theorem (see \eg \cite[Theorem 7.1, p. 195]{pp}) implies that, for any $f\in\Lp[a,b]$, $0<p<\infty$,
\[
\inf_{q\in\Poly_r} \norm{f-q}{\Lp[a,b]} \leq c \w_r(f, b-a, [a,b])_p .
\]

\begin{lemma} \label{lem2.1}
Let $w$ be a doubling weight from the class $\W(\Z)$, $0<p < \infty$,  $f\in\Lp^w$,   $n,r\in\N$, and let $A >0$ and $\theta>0$ be arbitrary.
Also, let
\[
I^* := \left\{ 1\leq i \leq n \st I_i \cap \Z_{A, 1/n}   = \emptyset \right\} ,
\]
and suppose that, for each $i\in I^*$, the interval $J_i$ is such that $I_i\subset J_i \subset \I_{A,1/n}$ and $|J_i| \leq c_0 |I_i|$.
Then
 \[
 \sum_{i\in I^*}  w(x_i)\w_r(f, |J_i|, J_i)_p^p \leq c \widetilde \Omega_\varphi^r (f,   \theta/ n)_{\Lp(\I_{A,1/n}), w}^p       ,
 \]
where the   constant $c$   depends only on $r$, $p$, $c_0$, $\theta$, $A$ and the weight $w$.
 \end{lemma}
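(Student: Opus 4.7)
The plan is to reduce the estimate to a local analysis on each $J_i$, $i\in I^*$, using that the weight is essentially constant there, then to convert the local polynomial approximation error into a portion of the averaged modulus on $\I_{A, 1/n}$, and finally to sum.

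First, for each $i\in I^*$, since $J_i\subset \I_{A, 1/n}$ and $|J_i|\leq c_0 |I_i|\leq 5 c_0\dn(x)$ for every $x\in I_i$, \lemp{iv} gives $w(x)\sim w(x_i)$ uniformly on $J_i$ with constants depending only on $A$, $c_0$, and $w$. Combining Whitney's inequality with the trivial converse $\|\Delta_s^r(f,\cdot,J_i)\|_{\Lp(J_i)}\leq 2^r E_{r-1}(f)_{\Lp(J_i)}$ (since $\Delta_s^r q\equiv 0$ for $q\in\Poly_r$) yields the equivalence $\omega_r(f,|J_i|,J_i)_p\sim E_{r-1}(f)_{\Lp(J_i)}$, so it is enough to control the sum $\sum_{i\in I^*}w(x_i)E_{r-1}(f)_{\Lp(J_i)}^p$.

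Second, on each $I_i$ we have $\varphi(x)/n\sim |I_i|$ (because $\dn(x)=\varphi(x)/n+1/n^2\sim |I_i|$ on $I_i$), so the $\varphi$-scaled step $h\varphi(x)$ with $h\in(0,\theta/n]$ corresponds to an ordinary step of size at most $\sim\theta |I_i|$. The key local inequality needed is
\[
E_{r-1}(f)_{\Lp(J_i)}^p \leq c\,\frac{n}{|I_i|}\int_0^{\theta/n}\|\Delta_{h\varphi(\cdot)}^r(f,\cdot,\I_{A, 1/n})\|_{\Lp(\tilde I_i)}^p\,dh,
\]
for a suitable subinterval $\tilde I_i\subset I_i$ of length comparable to $|I_i|$, chosen so that $[x-rh\varphi(x)/2,\,x+rh\varphi(x)/2]\subset J_i\cap\I_{A, 1/n}$ for all $x\in\tilde I_i$ and $h\in(0,\theta/n]$. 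To prove it I let $P_i\in\Poly_r$ be a polynomial of near-best approximation on $J_i$, replace $f$ by $f-P_i$ (which leaves the $r$-th difference invariant and satisfies $\|f-P_i\|_{\Lp(J_i)}\leq c E_{r-1}(f)_{\Lp(J_i)}$), then apply the change of variables $s=h\varphi(x)$ together with a standard unweighted averaged-modulus bound of the form $\|g\|_{\Lp(I')}^p\leq c\,t^{-1}\int_0^t\|\Delta_s^r(g,\cdot,J_i)\|_{\Lp(J_i)}^p\,ds$ for $g=f-P_i$ and $t\sim\theta|I_i|$, which for $0<p<\infty$ rests on the doubling inequality $\omega_r(g,2t)_p\leq 2^r\omega_r(g,t)_p$ (valid uniformly in $p$ via the $p$-triangle inequality for $p<1$).

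Finally, multiplying by $w(x_i)$, using $w(x_i)\sim w(x)$ for $x\in\tilde I_i\subset J_i$, and summing over $i\in I^*$, the pairwise disjointness of the $\tilde I_i$ (being subsets of the essentially disjoint $I_i$'s) and the inclusion $\bigcup_{i\in I^*}\tilde I_i\subset\I_{A, 1/n}$ produce
\[
\sum_{i\in I^*}w(x_i)\omega_r(f,|J_i|,J_i)_p^p \leq c\,\frac{n}{\theta}\int_0^{\theta/n}\int_{\I_{A, 1/n}}w(x)|\Delta_{h\varphi(x)}^r(f,x,\I_{A, 1/n})|^p\,dx\,dh,
\]
which equals $c\,\widetilde\Omega_\varphi^r(f,\theta/n)_{\Lp(\I_{A, 1/n}),w}^p$ by the definition of the averaged main-part modulus. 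The main obstacle is the unweighted averaged-modulus inequality that underlies the second step: for $1\leq p<\infty$ it follows painlessly from $K$-functional/realization machinery, but since these $K$-functionals are frequently zero when $0<p<1$ (as emphasized in the Introduction, cf.\ \cite{dhi}), a direct argument is required, one that controls $\|f-P_i\|_{\Lp}^p$ by an $s$-integral of $\|\Delta_s^r(f-P_i)\|_{\Lp}^p$ via continuity of $s\mapsto\|\Delta_s^r g\|_{\Lp}$ and the doubling of the ordinary $\omega_r$.
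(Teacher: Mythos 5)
Your overall route is the paper's route (weight comparability on $J_i$ via \lemp{iv}, a Whitney/averaged-modulus bound on each $J_i$, the substitution $s=h\varphi(x)$, then summation), but the pivotal ``key local inequality'' is false as stated. You restrict the base points of the differences to a subinterval $\tilde I_i\subset I_i$, while for $h\le\theta/n$ the steps $h\varphi(x)$ have length at most about $r\theta|I_i|$. For small $\theta$ (and the lemma must hold for every $\theta>0$; in the Jackson proof it is applied with small $\theta$) these differences sample $f$ only on a small neighborhood of $\tilde I_i$, whereas the left-hand side, the Whitney error of $f$ on all of $J_i$, sees the whole interval $J_i$ --- and the hypothesis allows $J_i$ strictly larger than $I_i$ (in the direct theorem one really takes $J_i=I_i\cup I_{i-1}$, $I_{\nu_j+1}\cup L_j$, etc.). Taking $f\equiv 0$ on a neighborhood of $\tilde I_i$ and non-polynomial on the rest of $J_i$ makes your right-hand side zero and your left-hand side positive, so no constant $c(\theta,c_0,r,p,\dots)$ can work; moreover, for large $\theta$ your requirement that $[x-rh\varphi(x)/2,x+rh\varphi(x)/2]\subset J_i$ for \emph{all} $h\le\theta/n$ and all $x\in\tilde I_i$ with $|\tilde I_i|\sim|I_i|$ is unsatisfiable on the middle intervals, where $\varphi(x)\sim n|I_i|$. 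The paper avoids both problems by keeping the $x$-integration over all of $J_i$: it applies \ineq{pp-ineq} on $J_i$ at scale $c^*|I_i|$, uses the pointwise domination $|\Delta^r_{s}(f,x,J_i)|\le|\Delta^r_{s}(f,x,\I_{A,1/n})|$ (valid because $J_i\subset\I_{A,1/n}$), and replaces disjointness of the $\tilde I_i$ by the bounded overlap of the $J_i$ guaranteed by \prop{auxprop}.

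There is also a quantitative inconsistency: even granting a corrected local estimate, your prefactor $n/|I_i|$ cannot be summed, since it is of order $n^2$ on the middle intervals and $n^3$ on the endpoint ones, while your final display requires a prefactor $\lesssim n/\theta$ uniformly in $i$. The substitution $s=h\varphi(x)$ produces the Jacobian $\varphi(x)$, and the correct factor is $\varphi(x)/|J_i|\le c\,n$ because $|I_i|\ge \dn(x)/c\ge \varphi(x)/(cn)$ on $J_i$; in addition, on the two endpoint intervals (present when $\pm1\notin\Z$) the transformed upper limit $c^*|I_i|/\varphi(x)$ blows up as $x\to\pm1$, and one needs the paper's extra observation that the difference vanishes unless $h\le c\sqrt{c^*}/n$, which is what caps the $h$-range by $\theta/n$ after choosing $c^*$ small. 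Finally, two smaller remarks: the ``standard unweighted averaged-modulus bound'' you postpone is exactly \ineq{pp-ineq} combined with Whitney's theorem and is available for all $0<p<\infty$ (see \cite[Lemma 7.2, p.~191]{pp}), so no $K$-functional detour or separate continuity argument is needed for $0<p<1$; and the doubling constant $2^r$ you quote for $\omega_r(g,2t)_p$ is not correct when $p<1$ (the constant is $p$-dependent), though this is harmless.
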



\begin{proof} The proof is rather standard (see \cite{dly} or \cite[Lemma 5.1]{k-weighted}).
In fact, it is possible to derive an analog of this lemma as a corollary of \cite[Lemma 5.1]{k-weighted} by replacing $f$ by a function $g$ which is identically zero near the points from $\Z$.
However, this approach is not shorter, and we do not immediately get exactly what we need. Hence,
we opted for a direct proof even though it is quite similar to that of \cite[Lemma 5.1]{k-weighted}. We adduce it here for completeness.

The main idea of the proof is the employment of the inequality (see \cite[Lemma 7.2, p. 191]{pp})
\be \label{pp-ineq}
\w_r(f, t, [a,b])_p^p \leq {c \over t} \int_0^t \int_a^b |\Delta_h^r(f, x, [a,b])|^p dx\, dh , \quad 0<p<\infty .
\ee


\prop{auxprop} implies that each $J_i$ has a nonempty intersection with at most $m$ intervals $I_j$, $1\leq j\leq n$, where $m$ depends only on $c_0$.
Since $|I_i| \sim |I_{i\pm 1}| \sim \dn(x_i)$, this implies that
 $\dnx \sim \dn(y)\sim |I_i|$ for all $x,y \in J_i$, and so $|x-y| \leq c \dnx$, for all $x,y\in J_i$.
Hence, since $J_i \subset \I_{A,1/n} $, \lemp{iv} implies that $w(x) \sim w(x_i)$, for all $x\in J_i$, where the equivalence constants depend only on $w$, $A$ and $c_0$.

Taking this into account and  using \ineq{pp-ineq}  we have, for each $i\in I^*$,
\begin{eqnarray*}
w (x_i) \w_r(f, |J_i|, J_i)_p^p &\leq& c w (x_i) \w_r(f, c^*|I_i| , J_i)_p^p \\
& \leq &
  c |I_i|^{-1} \int_0^{c^*|I_i| } \int_{J_i} w (x_i) |\Delta_h^r(f, x, J_i)|^p dx\, dh \\
&\leq&  c  \int_{J_i}  \int_0^{c^*|I_i|/\varphi(x)}  {\varphi(x) \over |I_i|}  w (x) |\Delta_{h\varphi(x)}^r(f, x, J_i)|^p dh \, dx ,
\end{eqnarray*}
where $0<c^* < 1$ is a constant that we will choose later.

Now,    $|I_i| \sim \dn(x) \sim \varphi(x)/n$ for $x\in J_i$, $i \in J^*$, where
\[
J^* := \left\{ i \in I^* \st  J_i \cap (I_1 \cup I_n) = \emptyset \right\} .
\]
Note that depending on whether or not $z_1=-1$ and $z_M=1$ the set $J^*$ may or may not be the same as $I^*$.

Now, for $i \in J^*$, taking into account that $c^* \leq \sqrt{c^*}$,  we have
 \be \label{newaux}
w (x_i) \w_r(f, |J_i|, J_i)_p^p \leq c n  \int_{J_i}  \int_0^{c \sqrt{c^*}/n}    w (x) |\Delta_{h\varphi(x)}^r(f, x, J_i)|^p dh dx .
 \ee
Suppose now that $i \in I^*\setminus J^*$ (we have already remarked that this set may be empty depending on $w$). Recall that    $\Delta_h^r(f, x, J_i)$ is defined to be $0$ if $x\pm rh/2 \not\in J_i$ and, in particular,
$ \Delta_{h\varphi(x)}^r(f, x, J_i) = 0 $ if  $1-|x| < rh\varphi(x)/2$. Therefore, since $\varphi(x)/|I_i| \leq c n \dnx/|I_i| \leq cn$, $x\in J_i$,  for each fixed $x\in J_i$, we have
\[
\int_0^{c^*|I_i|/\varphi(x)}  {\varphi(x) \over |I_i|}  w (x) |\Delta_{h\varphi(x)}^r(f, x, J_i)|^p dh \leq
cn \int_S w (x) |\Delta_{h\varphi(x)}^r(f, x, J_i)|^p dh ,
\]
where
\begin{eqnarray*}
S &:=& \left\{ h \st 0< h \leq \min\left\{ {c^*|I_i| \over  \varphi(x)},  {2(1-|x|) \over r \varphi(x)}  \right\} \right\}\\
& \subset&
\left\{ h \st 0< h \leq c \min\left\{ {c^* \over  n^2  \sqrt{ 1-|x|} },  \sqrt{ 1-|x|}   \right\} \right\}
\subset
\left\{ h \st 0< h \leq c \sqrt{c^*}/n \right\} .
\end{eqnarray*}
Therefore, \ineq{newaux} is valid for $i \in I^*\setminus J^*$ as well. We now choose $c^*$ to be such that $c \sqrt{c^*}$ in the upper limit of the inner integral in \ineq{newaux} is less than $\theta$.
Since each $x$ belongs to finitely many $J_i$'s by \prop{auxprop}, we have
\begin{eqnarray*}
 \sum_{i\in I^*}  w (x_i)\w_r(f, |J_i|, J_i)_p^p  & \leq &
c n  \sum_{i\in I^*}   \int_{J_i}  \int_0^{\theta/n}    w (x) |\Delta_{h\varphi(x)}^r(f, x, J_i)|^p dh dx \\
& \leq &
c n   \int_0^{\theta/n}  \int_{\I_{A,1/n}}      w (x) |\Delta_{h\varphi(x)}^r(f, x, \I_{A,1/n})|^p  dx dh \\
& \leq &
c \widetilde \Omega_\varphi^r (f, \theta/n)_{\Lp(\I_{A,1/n}), w }^p,
\end{eqnarray*}
and the proof is complete.
\end{proof}

 \sect{Jackson type estimate} \label{jacksec}

 The following lemma follows from  \cite[Lemma 3.1]{k-weighted}.

 \begin{lemma} \label{lem5.1}
Let $1\leq i \leq n$, and let $\nu_0, \mu  \in\N_0$ be such that $\mu \geq c_* \max\{\nu_0, 1\}$, where $c_*$ is some sufficiently large absolute (positive) constant. Then the polynomial
$T_i = T_{i} (n, \mu )$ of degree $\leq c(\mu) n$ satisfies the following inequalities for all $x\in [-1,1]$:
\[
\left| T_{i} (x) -   \chi_i(x) \right| \leq c   \psi_i(x)^\mu
\]
and
\[
\left| T_{i}^{(\nu)} (x)  \right| \leq c  |I_i|^{-\nu}   \psi_i(x)^\mu , \quad 0\leq \nu \leq \nu_0 ,
\]
where constants $c$ depend only on $\mu$.
\end{lemma}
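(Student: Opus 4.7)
My plan is to derive this lemma directly from \cite[Lemma 3.1]{k-weighted}, by checking that the polynomial $T_i=T_i(n,\mu)$ produced there already satisfies both the zeroth--order estimate against $\chi_i$ and the derivative bounds as stated here.

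First, I would recall the construction in \cite[Lemma 3.1]{k-weighted}: the polynomial $T_i$ is a smoothed version of the step function $\chi_i$, centered at the Chebyshev node $x_i$, built as an integrated polynomial bump of degree at most $c(\mu)n$ whose mass is concentrated in an interval of length comparable to $|I_i|$ about $x_i$. The cited lemma supplies the zeroth--order bound $|T_i(x)-\chi_i(x)|\leq c\,\psi_i(x)^{\mu}$, which is exactly the first claim here, so no further work is required for that part.

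For the derivative bound I would split the argument into two regimes. When $|x-x_i|\leq c_1|I_i|$ we have $\psi_i(x)\sim 1$, so the target reduces to $|T_i^{(\nu)}(x)|\leq c|I_i|^{-\nu}$; since $T_i$ is uniformly bounded on $[-1,1]$ by the zeroth--order estimate and has degree bounded by a constant times $n$, this follows from a standard Bernstein--Markov inequality applied on the interval $I_{i-1}\cup I_i\cup I_{i+1}$ of length $\sim |I_i|$. When $|x-x_i|\geq c_1|I_i|$, the characteristic function $\chi_i$ is locally constant, so $T_i^{(\nu)}(x)=(T_i-\chi_i)^{(\nu)}(x)$. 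A local Markov-type inequality of Dzyadyk--Telyakovskii type, applied to $T_i-\chi_i$ on a subinterval of length comparable to $|I_i|/\psi_i(x)$ centered at $x$, converts the pointwise majorant $\psi_i(x)^{\mu}$ into the derivative majorant
\[
|T_i^{(\nu)}(x)|\leq c\left(\frac{\psi_i(x)}{|I_i|}\right)^{\nu}\psi_i(x)^{\mu-c\nu}\leq c|I_i|^{-\nu}\psi_i(x)^{\mu},
\]
where the last step uses the hypothesis $\mu\geq c_*\nu_0$ to absorb the exponent $c\nu$ lost in iterating the local inequality.

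The main obstacle is the sharp bookkeeping of exponents: each successive derivative costs a bounded amount of decay in $\psi_i$, and the assumption $\mu\geq c_*\max\{\nu_0,1\}$ is calibrated precisely so that, for $c_*$ equal to this per--step loss in the local Markov inequality, all $\nu_0+1$ estimates for $0\leq\nu\leq\nu_0$ hold simultaneously for the single polynomial $T_i$. The degree bound $\deg T_i\leq c(\mu)n$ is inherited directly from the construction in \cite[Lemma 3.1]{k-weighted}, completing the reduction.
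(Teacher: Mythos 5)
Your proposal misreads what the citation provides and then tries to fill a gap that cannot be filled this way. The paper's proof of this lemma is the citation itself: Lemma~3.1 of \cite{k-weighted} already contains \emph{both} inequalities (the error bound against $\chi_i$ and the derivative bounds $|T_i^{(\nu)}(x)|\leq c|I_i|^{-\nu}\psi_i(x)^{\mu}$ for $0\leq\nu\leq\nu_0$) for the explicitly constructed $T_i$; indeed the hypothesis $\mu\geq c_*\max\{\nu_0,1\}$ is a fingerprint of that construction and has no role in an a posteriori argument. So the extra work you propose is not needed --- and, more seriously, it is unsound. In your first regime, applying Markov's inequality on a window of length $\sim|I_i|$ to a polynomial of degree $\leq c(\mu)n$ costs a factor $\bigl(c(\mu)n\bigr)^{2}/|I_i|$ per derivative, so you obtain $|T_i^{(\nu)}(x)|\leq c\,n^{2\nu}|I_i|^{-\nu}$, which is far weaker than the claim (the correct tool there would be the pointwise Bernstein--Markov inequality $|P^{(\nu)}(x)|\leq c\,\rho_n(x)^{-\nu}\norm{P}{\C[-1,1]}$ for $P\in\Poly_{cn}$ together with $\rho_n(x)\sim|I_i|$ near $I_i$). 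In your second regime the same spurious degree factor appears, and in addition the exponent bookkeeping is incoherent: $\bigl(\psi_i(x)/|I_i|\bigr)^{\nu}\psi_i(x)^{\mu-c\nu}\leq|I_i|^{-\nu}\psi_i(x)^{\mu}$ forces $\psi_i(x)^{\nu(1-c)}\leq 1$, i.e.\ $c\leq1$, since $\psi_i\leq1$; if each derivative genuinely loses a power of $\psi_i$, the hypothesis $\mu\geq c_*\nu_0$ cannot restore the exponent to $\mu$, because the target exponent is $\mu$ itself, not $\mu$ minus something.

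The deeper point is that the derivative estimates are \emph{not} a consequence of the zeroth-order estimate, so no Markov-- or Dzyadyk--type argument starting only from $|T_i-\chi_i|\leq c\,\psi_i^{\mu}$ can prove them. For instance, if $D_n\in\Poly_{cn}$ satisfies $D_n(x)\sim\psi_i(x)^{\mu}$ (such polynomial majorants exist), then $\widetilde T_i:=T_i+D_n(x)\cos(n\arccos x)$ satisfies the same zeroth-order bound with a slightly larger constant, yet at points $x$ with $\rho_n(x)\ll|I_i|$ (e.g.\ $x$ near $\pm1$ while $I_i$ is a middle interval) its derivatives are of size $\sim n^{2\nu}\psi_i(x)^{\mu}\gg|I_i|^{-\nu}\psi_i(x)^{\mu}$. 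Hence the derivative bounds encode additional information about the specific kernel construction of $T_i$ in \cite[Lemma 3.1]{k-weighted}, and the correct ``proof'' here is simply to quote that lemma in full (or reproduce its construction), not to re-derive the second inequality from the first.
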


We are now ready to state and prove our main direct result.

\begin{theorem} \label{jacksonthm}
Let $w$ be a doubling weight from the class $\W(\Z)$, $r, \nu_0\in\N$, $\nu_0\geq r$, $0<p<\infty$,   and $f\in\Lp^w$.
 Then, there exists $N\in\N$ depending on $r$, $\nu_0$, $p$ and  the weight $w$, such that
 for every $n \geq N$,  $\ccc >0$ and $A>0$,  there exists a polynomial $P_n \in\Poly_n$ such that
\[
\norm{f-P_n}{p, w }   \leq c \widetilde \w_\varphi^r(f, A, \vartheta/n)_{p, w} \leq c   \w_\varphi^r(f, A, \vartheta/n)_{p, w}
\]
and
\[
\norm{ \dn^\nu P_n^{(\nu)}}{p, w } \leq c \widetilde \w_\varphi^r(f, A, \vartheta/n)_{p, w}  \leq c  \w_\varphi^r(f, A, \vartheta/n)_{p, w}   ,      \quad r\leq \nu \leq \nu_0,
\]
where constants $c$  depend only on  $r$, $\nu_0$, $p$, $\ccc$, $A$ and the weight $w$.
\end{theorem}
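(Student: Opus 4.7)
\medskip\noindent
\textbf{Proof plan for \thm{jacksonthm}.} The strategy is to build $P_n$ by stitching together near-best local polynomial approximants on the Chebyshev partition, using Mastroianni's truncation polynomials from \lem{lem5.1} as the partition of unity. Away from $\Z$ we use purely local approximation on enlarged Chebyshev intervals; near each $z_j$ we replace the local approximants by a \emph{single} near-best approximant to $f$ on $\Z_{2A,1/n}^j$, which is exactly what produces the $E_r(f)_{\Lp(\Z_{2A,t}^j),w}$ terms in the definition of $\widetilde\w_\varphi^r$.

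\smallskip\noindent
\emph{Step 1 (partition and local approximants).} Fix $N$ large (depending on $w$, $A$, $r$) so that for $n\geq N$ the sets $\Z_{2A,1/n}^j$ are pairwise disjoint and $\D$-separated; in particular the set $I^*:=\{\,1\leq i\leq n\st I_i\cap \Z_{A,1/n}=\emptyset\,\}$ contains a large ``buffer'' around each bad cluster. For $i\in I^*$ choose $J_i$ to be a bounded union of neighboring Chebyshev intervals (e.g. $I_{i-1}\cup I_i\cup I_{i+1}$, pulled back so that $J_i\subset\I_{A,1/n}$), and let $q_i\in\Poly_r$ satisfy $\|f-q_i\|_{\Lp(J_i)}\leq c\,\w_r(f,|J_i|,J_i)_p$ (Whitney). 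For each $j$, let $Q_j\in\Poly_r$ be a near-best $\Lp(\Z_{2A,1/n}^j,w)$-approximant to $f$, so that $\|f-Q_j\|_{\Lp(\Z_{2A,1/n}^j),w}\leq 2E_r(f)_{\Lp(\Z_{2A,1/n}^j),w}$. For the ``bad'' indices $i\notin I^*$ with $I_i\subset \Z_{2A,1/n}^j$, set $q_i:=Q_j$. (For the few transitional indices between a buffer and $\Z_{2A,1/n}^j$ one can take $q_i$ equal to either $Q_j$ or the adjacent $q_{i'}$; the choice will be absorbed by the $\psi_i^\mu$ decay below.)

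\smallskip\noindent
\emph{Step 2 (telescoping polynomial).} Apply \lem{lem5.1} with parameters $\nu_0$ and $\mu$ to be chosen, and set
\[
P_n(x):=q_1(x)+\sum_{i=2}^{n}\bigl(q_i(x)-q_{i-1}(x)\bigr)T_i(x),
\]
so that $\deg P_n\leq c(\mu)n$, which we absorb by replacing $n$ by $\lfloor n/c(\mu)\rfloor$ in the outer application (the resulting modulus changes by a constant). The key pointwise identity, valid on any $I_j$, is
\[
f(x)-P_n(x)=(f(x)-q_j(x))+\sum_{i\neq j}\bigl(q_i(x)-q_{i-1}(x)\bigr)\bigl(T_i(x)-\chi_i(x)\bigr),
\]
because $\chi_i(x)=1$ for $i\leq j$ and $0$ for $i>j$ when $x\in I_j$, so the $\chi_i$-telescope collapses to $q_j$.

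\smallskip\noindent
\emph{Step 3 (error estimate).} Using the bound $|T_i-\chi_i|\leq c\psi_i^\mu$ and the standard Nikolskii-type estimate $|q_i(x)-q_{i-1}(x)|\leq c|I_i|^{-1/p}\|q_i-q_{i-1}\|_{\Lp(J_i\cup J_{i-1})}$, followed by $\|q_i-q_{i-1}\|_{\Lp(J_i\cup J_{i-1})}\leq c\,\w_r(f,|J_i|,J_i\cup J_{i-1})_p$, and multiplying by $w(x)^{1/p}$ with the bullet $w(x)\leq c\psi_i(x)^{-s}w(x_i)$ (from \sectio{propdouble}), raise to the $p$-th power (standard for $p\geq 1$, using $p$-subadditivity and a large enough $\mu p$ for $p<1$). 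Summing over $x\in I_j$ and then over good $j\in I^*$, one arrives at
\[
\sum_{j\in I^*}\int_{I_j} w|f-P_n|^p\leq c\sum_{i\in I^*}w(x_i)\w_r(f,|J_i|,J_i)_p^p\sum_{j}\int_{I_j}\psi_i(x)^{\mu p-s-1}dx/|I_i|,
\]
which, once $\mu$ is chosen large so $\mu p-s$ exceeds the summability threshold from the bullets in \sectio{propdouble}, becomes $c\sum_{i\in I^*}w(x_i)\w_r(f,|J_i|,J_i)_p^p$. \lem{lem2.1} then converts this sum into $c\,\widetilde\Omega_\varphi^r(f,\ccc/n)_{\Lp(\I_{A,1/n}),w}^p\leq c\,\widetilde\w_\varphi^r(f,A,\ccc/n)_{p,w}^p$. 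For bad $j$ (i.e.\ $I_j\subset \Z_{2A,1/n}^{j'}$) the error reduces to $\|f-Q_{j'}\|_{\Lp(I_j),w}^p$ plus telescoping contributions, which sum to $c\sum_{j'}E_r(f)_{\Lp(\Z_{2A,1/n}^{j'}),w}^p$; the transition contributions between buffer and bad region decay by the $\psi_i^\mu$ factor and are absorbed into both pieces after choosing $\mu$ large. This delivers the first inequality.

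\smallskip\noindent
\emph{Step 4 (derivative estimates).} For $r\leq\nu\leq\nu_0$, differentiate termwise:
\[
P_n^{(\nu)}(x)=\sum_{i=2}^{n}\sum_{k=0}^{r-1}\binom{\nu}{k}(q_i-q_{i-1})^{(k)}(x)\,T_i^{(\nu-k)}(x),
\]
since $q_i-q_{i-1}\in\Poly_r$. Bound $|T_i^{(\nu-k)}|\leq c|I_i|^{-(\nu-k)}\psi_i^\mu$ and, by Markov's inequality on $J_i\cup J_{i-1}$, $|(q_i-q_{i-1})^{(k)}(x)|\leq c|I_i|^{-k-1/p}\|q_i-q_{i-1}\|_{\Lp(J_i\cup J_{i-1})}$; so each summand is majorised by $c|I_i|^{-\nu-1/p}\psi_i^\mu\,\w_r(f,|J_i|,J_i\cup J_{i-1})_p$ on good intervals. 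Since $\dnx^\nu|I_i|^{-\nu}\leq c\psi_i(x)^\nu$ on $[-1,1]$, the factor $\dnx^\nu$ absorbs into a net $\psi_i^{\mu+\nu}$ decay; the bad-interval contribution is handled exactly as in Step 3, using that $Q_j^{(\nu)}$ is only a polynomial of degree $<r$, hence $\equiv 0$ for $\nu\geq r$, so only the $T_i^{(\nu-k)}$ factors carry the derivatives and their localization keeps the bad contribution confined to $\Z_{2A,1/n}^j$ modulo the $\psi_i^\mu$ tail. Choosing $\mu$ large enough for all the power sums to converge yields the second inequality.

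\smallskip\noindent
\emph{Main obstacle.} The delicate point is bookkeeping in the transitional indices where $q_i$ switches from a local approximant to $Q_j$: one must verify that the jumps $q_i-q_{i-1}$ across the buffer are controlled simultaneously by the local modulus (on the buffer side) and by $E_r(f)_{\Lp(\Z_{2A,1/n}^j),w}$ (on the $\Z$ side), and that the $\psi_i^\mu$ weight confines each contribution to the correct side. This requires $\mu$ chosen large enough to beat every power of $\psi_i$ arising from $w(x)/w(x_i)\leq c\psi_i^{-s}$, from Markov ($|I_i|^{-k}$), from $\dn^\nu/|I_i|^\nu\leq\psi_i^\nu$, and from the quasi-norm exponent $1/p$. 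Once $\mu$ is fixed, the rest is a routine assembly of the ingredients in \sectio{propdouble}--\sectio{locapp}.
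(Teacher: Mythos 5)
Your overall architecture (telescoping over the Chebyshev partition with the polynomials $T_i$ of \lem{lem5.1}, Whitney plus \lem{lem2.1} for the main--part term, near-best approximants near $\Z$ producing the $E_r$ terms, and a large $\mu$ to beat all powers of $\psi_i$) is the same as the paper's, but two steps that you treat as routine are exactly where the real work lies, and as written they are gaps. First, the degree reduction and the arbitrary parameter $\ccc$: you say that replacing $n$ by $\lfloor n/c(\mu)\rfloor$ only "changes the modulus by a constant". That is not available at this point. The complete modulus $\widetilde\w_\varphi^r(f,A,t)_{p,w}$ contains the terms $E_r(f)_{\Lp(\Z_{2A,t}^j),w}$, which are \emph{increasing} in $t$; its comparability under a bounded change of $t$ (or of $A$) is only established in Section~\ref{real} as a corollary of \thm{jacksonthm} itself, so invoking it here is circular. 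Concretely, your $Q_j$ is near best on $\Z_{2A,1/n}^j$, and for small $\ccc$ one has $\Z_{2A,1/n}^j\not\subset\Z_{2A,\ccc/n}^j$, so $E_r(f)_{\Lp(\Z_{2A,1/n}^j),w}$ cannot be bounded by $E_r(f)_{\Lp(\Z_{2A,\ccc/n}^j),w}$; your scheme proves the theorem only for $\ccc\gtrsim c(\mu)$. The paper avoids this by modifying the Chebyshev partition so that each bad interval is $[z_j-\newconst_j\dn(z_j),\,z_j+\newconst_j\dn(z_j)]$ with $\newconst_j$ chosen at the very end (of size roughly $A\ccc/n_0$ or $A\ccc^2/n_0^2$) so that the inclusions $\Z_{A,\ccc/n}^j\subset\Z_{c_0\newconst_j,1/m}^j$ and $\Z_{\newconst_j,1/m}^j\subset\Z_{2A,\ccc/n}^j$ hold; this matching of scales is not cosmetic, it is what makes the statement true for every $\ccc>0$.

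Second, the transitional jumps that you flag as the "main obstacle" are genuinely unresolved in your sketch: saying the $\psi_i^\mu$ decay absorbs them does not work, because for the intervals adjacent to $\Z_{2A,1/n}^j$ one has $\psi_i\sim 1$ there, and the difficulty is not decay but the mismatch of norms --- $Q_j$ is near best in the \emph{weighted} norm on a set where $w\not\sim w_n$, while the telescoping estimate needs unweighted $\Lp$ control of $q_i-Q_j$ multiplied by $w_n(x_i)$. The paper resolves this by moving the polynomial-difference norms from $I_{\nu_j}$ to subintervals $L_j,R_j$ at distance $\gtrsim\dn(z_j)$ from $z_j$ (where $w\sim w_n$ by \lemp{v}) and by invoking \lem{nearbest} to know that the neighboring local approximants remain near best on the enlarged intervals, so Whitney applies; some such mechanism is indispensable and is missing from your proposal. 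Finally, a concrete error in Step 4: the inequality $\dnx^\nu|I_i|^{-\nu}\leq c\,\psi_i(x)^\nu$ is false (the left side can be as large as $n^\nu$ while $\psi_i\leq 1$); the correct estimate, via $\dnx^2\leq 4\dn(x_i)(|x-x_i|+\dn(x_i))$, is $\dnx^\nu|I_i|^{-\nu}\leq c\,\psi_i(x)^{-\nu/2}$, i.e. this factor \emph{costs} half a power of $\psi_i$ per derivative rather than helping, which is why the paper's admissibility condition reads $(\mu-r-1-\nu_0/2)p-s\geq 2$. This last point is repairable by enlarging $\mu$, but the first two require the additional constructions described above.
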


\begin{proof} The idea of this proof is similar to that of \cite[Theorem 5.3]{k-weighted} where a Jackson type theorem was proved for the weights $w_n$ with moduli of smoothness defined like $\Omega_\varphi^r$ but with $[-1,1]$ instead of $\I_{A, h}$. However, there are some  difficulties that we need to overcome now in order to get the right estimates near $\Z$.

Let $A>0$ and $\ccc>0$ be given (without loss of generality, we can assume that $0<\ccc\leq 1$), and  let $n\in\N$ be sufficiently large (so that each (nonempty) interval $[z_j, z_{j+1}]$, $0\leq j\leq M$,   contains at least $10$ intervals $I_i$), and
let $(x_i)_{i=0}^n$ be the Chebyshev partition of $[-1,1]$. Recall that  $I_i := [x_i, x_{i-1}]$, $1\leq i\leq n$.

For each $1\leq j\leq M$, let
\[
\nu_j := i \quad \mbox{\rm such that}\quad z_j \in I_i
\]
(recall that, if $z_j = x_i$, $1\leq i\leq n$, then we pick the right interval containing $z_j$, \ie $\nu_j = i$ in this case).

Now, we modify   partition $(x_i)_{i=0}^n$ by replacing, for each $1\leq j \leq M$, the knots $x_{\nu_j}$ and $x_{\nu_j-1}$ by $z_j - \newconst_j\dn(z_j)$ and $z+\newconst_j\dn(z_j)$, respectively (replacing only one of them if $z_j$ is $1$ or $-1$). More precisely, for some collection of $M$ constants $0<\newconst_j \leq 1/10$, $1\leq j \leq M$,  which we will choose later, define
\[
\widetilde x_1 := 1-\newconst_M/n^2, \quad \mbox{\rm if $i = 1$ and $z_M=1$},
\]
and
\[
\widetilde x_{n-1} := -1+\newconst_1/n^2, \quad \mbox{\rm if  $z_1=-1$}.
\]
Now, for all   $1\leq i \leq n-1$  where $\widetilde x_i$ has not been defined yet, we let
\[
\widetilde x_i :=
\begin{cases}
z_j - \newconst_j \dn(z_j),   & \mbox{\rm if $i=\nu_j$, $1\leq j \leq M$}, \\
z_j + \newconst_j \dn(z_j),   & \mbox{\rm if $i=\nu_j-1$, $1\leq j \leq M$}, \\
x_i, & \mbox{\rm otherwise}.
\end{cases}
\]
We now note that this new partition $(\widetilde x_i)_{i=0}^n$ has the same properties as the original Chebyshev partition (with constants than now depend on $\newconst_j$). In particular, if $\widetilde I_i := [\widetilde x_i, \widetilde x_{i-1}]$, then
 $|I_i| \sim |\widetilde I_i|$, $|\widetilde I_{i\pm 1}| \sim |\widetilde I_i|$,   $\widetilde \psi_i(x) := |\widetilde I_i| /\left( |x-\widetilde x_i| + |\widetilde I_i| \right) \sim \psi_i(x)$ and
 $|\chi_{[\widetilde x_i,1]}(x) - \chi_{[x_i,1]}(x)| \leq c \psi_i(x)$ uniformly in $x$, etc.
We now simplify our notation by dropping tilde and keeping in mind that, from now on in this proof, $(x_i)_{i=0}^n$ is the {\em modified} Chebyshev partition.
Hence,
$z_j$ is now the center of $I_{\nu_j}$ (unless $z_j$ is $-1$ or $1$ in which case $z_j$ is, respectively, the left or the right endpoint of $I_{\nu_j}$).

It is convenient to denote
\[
I_* := \left\{ 1\leq i \leq n \st i=\nu_j, 1\leq j \leq M \right\} \andd
 I^* :=\left\{ 1\leq i \leq n \st i\not\in I_* \right\}.
 \]
For each $1\leq i\leq n$,   define $q_i \in\Poly_r$ to be  a polynomial of near best approximation of $f$ on $I_i$ with the weight $w$, \ie
\[
\norm{f-q_i}{\Lp(I_i), w} \leq c E_r(f)_{\Lp(I_i), w} ,
\]
and  define $S_n$ to be a piecewise polynomial function such that $S_n\big|_{I_i} = q_i$, $1\leq i\leq n$.

The following is a crucial observation that follows from \lemp{v} and properties of $w_n$:
\be \label{repl}
w(x) \sim w_n(x)\sim w_n(x_i), \quad \mbox{\rm for each $x\in I_i $ with $i\in  I^*$.}
\ee

Now, using Whitney's inequality we get
\begin{eqnarray*}
\norm{f-S_n}{p, w}^p
& = &
\sum_{i\in I^*}  \int_{I_i} w (x) |f(x)-S_n(x)|^p dx + \sum_{j=1}^M   \int_{I_{\nu_j}} w (x) |f(x)-S_n(x)|^p dx \\
& \leq &
c\sum_{i\in I^*}  w_n (x_i) \int_{I_i} |f(x)-q_i(x)|^p dx +  c\sum_{j=1}^M   E_r(f)_{\Lp(\Z_{\newconst_j,1/n}^j), w}^p   \\
& \leq &
c\sum_{i\in I^*}  w  (x_i) \w_r(f, |I_i|, I_i)_p^p   +  c\sum_{j=1}^M   E_r(f)_{\Lp(\Z_{\newconst_j,1/n}^j), w}^p    \\
& \leq &
c \widetilde \Omega_\varphi^r (f,   \theta/ n)_{\Lp(S), w}^p  +   c\sum_{j=1}^M   E_r(f)_{\Lp(\Z_{\newconst_j,1/n}^j), w}^p ,
\end{eqnarray*}
where $S:= S(1/n) := [-1,1] \setminus \cup_{j=1}^M \Z_{\newconst_j,1/n}^j$. In the last estimate, we  took into account that
  $I_i \subset S(1/n)$, $i\in I^*$.

It is easy to check that $S_n$ can be written as
\[
S_n(x) = q_n(x) + \sum_{i=1}^{n-1} \left[ q_i(x)-q_{i+1}(x) \right] \chi_i(x) ,
\]
and  define
\[
P_n(x) := q_n(x) + \sum_{i=1}^{n-1} \left[ q_i(x)-q_{i+1}(x) \right] T_{i}(x),
\]
where $T_i = T_i(n, \mu)$ are the polynomials from \lem{lem5.1}   with a sufficiently large $\mu$ (we will prescribe it later so   that all restrictions below are satisfied).

\lem{lem5.1} now implies
\begin{eqnarray*}
\norm{  S_n -P_n }{p, w }^p & \leq & \int_{-1}^1 w (x) \left[  \sum_{i=1}^{n-1}  \left| q_i (x)-q_{i+1} (x) \right|  \cdot |\chi_i(x) - T_i(x)|   \right]^p dx \\
& \leq & c
\int_{-1}^1 w (x) \left[  \sum_{i=1}^{n-1}  \norm{q_i -q_{i+1}}{\infty}  \psi_i(x)^\mu   \right]^p dx .
\end{eqnarray*}

Using the Lagrange interpolation formula and \cite[Theorem 4.2.7]{dl}  we have, for all $q \in \Poly_r$ and $0\leq l\leq r-1$,
\be \label{pohidna}
\norm{q^{(l)}}{\infty}  \leq c \psi_i^{-r+l+1} \norm{q^{(l)}}{\C(I_i)}    \leq c \psi_i^{-r+l+1} |I_i|^{-l-1/p} \norm{q}{\Lp(I_i)} ,
\ee
and so it yields (with $l=0$)
\begin{eqnarray*}
\norm{  S_n -P_n }{p, w }^p  & \leq &
c \int_{-1}^1 w (x) \left[  \sum_{i=1}^{n-1}  \norm{q_i -q_{i+1}}{\Lp(I_i)} |I_i|^{-1/p}     \psi_i(x)^{\mu-r+1}    \right]^p dx .
\end{eqnarray*}
Now, if $1\leq p < \infty$, since
$\sum_{i=1}^{n-1} \psi_i(x)^2 \leq c$,
we have by Jensen's inequality
\[
\left( \sum_{i=1}^{n-1} |\gamma_i| \psi_i(x)^2 \right)^p \leq c \sum_{i=1}^{n-1} |\gamma_i|^p \psi_i(x)^2 \leq c \sum_{i=1}^{n-1} |\gamma_i|^p    ,
\]
and if $0<p<1$, then
\[
\left( \sum_{i=1}^{n-1} |\gamma_i| \psi_i(x)^2 \right)^p \leq   \sum_{i=1}^{n-1} |\gamma_i|^p \psi_i(x)^{2p} \leq c \sum_{i=1}^{n-1} |\gamma_i|^p  .
\]

Therefore,
\begin{eqnarray*}
\norm{  S_n -P_n }{p, w }^p  & \leq &
c \int_{-1}^1     \sum_{i=1}^{n-1}  \norm{q_i -q_{i+1}}{\Lp(I_i)}^p |I_i|^{-1}  w (x) \psi_i(x)^{(\mu-r-1)p}   dx \\
& \leq &
c \left(\int_{[-1,1]\setminus \cup_{j=1}^M  I_{\nu_j}} + \sum_{j=1}^M \int_{I_{\nu_j}}     \right) \sum_{i=1}^{n-1}  \norm{q_i -q_{i+1}}{\Lp(I_i)}^p |I_i|^{-1}  w (x) \psi_i(x)^{(\mu-r-1)p}   dx \\
& =:& \II^* + \sum_{j=1}^M \II_j .
\end{eqnarray*}
Hence, since by \ineq{repl}, $w(x) \sim w_n(x) \leq c \psi_i(x)^{-s} w_n(x_i)$, for $x \in [-1,1]\setminus \cup_{j=1}^M I_{\nu_j}$, we have
\begin{eqnarray*}
\II^*
 & \leq &
 c \int_{[-1,1]\setminus \cup_{j=1}^M I_{\nu_j}}     \sum_{i=1}^{n-1}  \norm{q_i -q_{i+1}}{\Lp(I_i)}^p |I_i|^{-1}  w_n (x_i) \psi_i(x)^{(\mu-r-1)p-s}   dx \\
  & \leq &
 c     \sum_{i=1}^{n-1}  \norm{q_i -q_{i+1}}{\Lp(I_i)}^p |I_i|^{-1}  w_n (x_i) \int_{-1}^1 \psi_i(x)^{(\mu-r-1)p-s}   dx \\
   & \leq &
 c     \sum_{i=1}^{n-1}  \norm{q_i -q_{i+1}}{\Lp(I_i)}^p   w_n (x_i) ,
\end{eqnarray*}
if $ (\mu-r-1)p-s \geq 2$, since $\int_{-1}^1 \psi(x)^\a dx \leq c |I_i|$ if $\a\geq 2$.

 Also, for each $1\leq j \leq M$, taking into account    that $|x-x_i|+|I_i| \sim |z_j-x_i|+|I_i|$ and so $\psi_i(x) \sim \psi_i(z_j)$
 uniformly for    $x\in   I_{\nu_j}$,
 we have
\begin{eqnarray*}
 \II_j & \leq  &
 c \sum_{i=1}^{n-1}  \norm{q_i -q_{i+1}}{\Lp(I_i)}^p |I_i|^{-1} \int_{I_{\nu_j}}  w (x) \psi_i(x)^{(\mu-r-1)p}   dx \\
& \leq  &
 c \sum_{i=1}^{n-1}  \norm{q_i -q_{i+1}}{\Lp(I_i)}^p |I_i|^{-1}  \psi_i(z_j)^{(\mu-r-1)p}  \int_{I_{\nu_j}}  w (x)    dx \\
& \leq  &
 c \sum_{i=1}^{n-1}  \norm{q_i -q_{i+1}}{\Lp(I_i)}^p |I_i|^{-1}  \psi_i(z_j)^{(\mu-r-1)p} \dn(z_j) w_n(z_j)  \\
& \leq  &
 c \sum_{i=1}^{n-1}  \norm{q_i -q_{i+1}}{\Lp(I_i)}^p w_n(x_i) |I_i|^{-1}  \psi_i(z_j)^{(\mu-r-1)p-s} \dn(z_j) .
\end{eqnarray*}
 Now, using the inequality $\dnx^2 \leq 4 \dn(y) \left( |x-y|+ \dn(y)\right)$ we have
\begin{eqnarray*}
 |I_i|^{-1}  \psi_i(z_j)^{(\mu-r-1)p-s} \dn(z_j) & \sim & \psi_i(z_j)^{(\mu-r-1)p-s} { \dn(z_j) \over \dn(x_i) }\\
 & \leq &
 c \psi_i(z_j)^{(\mu-r-1)p-s} \left[ {  |x_i-z_j|+ \dn(x_i)  \over \dn(x_i) } \right]^{1/2}\\
 & \sim &
 c \psi_i(z_j)^{(\mu-r-1)p-s-1/2}  \leq c ,
\end{eqnarray*}
provided $(\mu-r-1)p-s-1/2 \geq 0$. Note also that we could alternatively estimate this quantity as follows.
\begin{eqnarray*}
 |I_i|^{-1}  \psi_i(z_j)^{(\mu-r-1)p-s} \dn(z_j)& \sim &  |I_i|^{-1} \int_{I_{\nu_j}} \psi_i(z_j)^{(\mu-r-1)p-s} dx \\
& \sim & |I_i|^{-1} \int_{I_{\nu_j}} \psi_i(x)^{(\mu-r-1)p-s} dx \\
& \leq & c |I_i|^{-1} \int_{-1}^1  \psi_i(x)^{(\mu-r-1)p-s} dx \leq c ,
\end{eqnarray*}
provided $(\mu-r-1)p-s \geq 2$.

Combining the above estimates we conclude that
\[
\norm{  S_n -P_n }{p, w }^p \leq c  \sum_{i=1}^{n-1}  \norm{q_i -q_{i+1}}{\Lp(I_i)}^p   w_n (x_i) .
\]

Now, for each $1\leq j \leq M$, let  $L_j := [z_j-\newconst_j\dn(z_j), z_j-c_0 \newconst_j\dn(z_j)]$ and $R_j := [z_j+c_0 \newconst_j\dn(z_j),z_j+\newconst_j\dn(z_j) ]$
(note that if $z_1=-1$, then $L_1$ is not defined, and if $z_M = 1$, then $R_M$ is not defined, but these intervals are not needed in these cases),
where $c_0 \in (0, 1)$ is a constant that we will choose later (it'll be $0.9$ but we will keep writing ``$c_0$'' in order not to distract from the proof).
Then, $L_j\cup R_j \subset I_{\nu_j}$, $|L_j| \sim |R_j| \sim |I_{\nu_j}|$, and $\dist(L_j, z_j) = \dist(R_j, z_j) = c_0 \newconst_j  \dn(z_j)$, for all $1\leq j \leq M$.

We continue estimating as follows
\begin{eqnarray*}
\norm{  S_n -P_n }{p, w }^p   & \leq &
 c   \left(   \sum_{i, i+1\in I^*} + \sum_{i \in I_*}    +  \sum_{i+1 \in I_*}    \right) \norm{q_i -q_{i+1}}{\Lp(I_i)}^p   w_n (x_i) \\
 & \leq &
 c \sum_{i, i+1\in I^*} \norm{q_i -q_{i+1}}{\Lp(I_i)}^p   w_n (x_i) + c \sum_{j=1}^M   \norm{q_{\nu_j} -q_{\nu_j+1}}{\Lp(I_{\nu_j})}^p   w_n (x_{\nu_j})\\
 && + c \sum_{j=1}^M \norm{q_{\nu_j-1} -q_{\nu_j}}{\Lp(I_{\nu_j-1})}^p   w_n (x_{\nu_j-1}) \\
 & \leq &
 c \sum_{i, i+1\in I^*} \norm{q_i -q_{i+1}}{\Lp(I_i)}^p   w_n (x_i) + c \sum_{j=1}^M   \norm{q_{\nu_j} -q_{\nu_j+1}}{\Lp(L_j)}^p   w_n (x_{\nu_j})\\
 && + c \sum_{j=1}^M \norm{q_{\nu_j-1} -q_{\nu_j}}{\Lp(R_j)}^p   w_n (x_{\nu_j-1}) ,
 \end{eqnarray*}
since  $\norm{q}{\Lp(I)} \sim \norm{q}{\Lp(J)}$, for any polynomial $q\in \Poly_r$ and any intervals $I$ and $J$ of comparable length which are either next to each other or are such that one interval is a subset of the other one.

 Now using \lem{nearbest} (that implies that $q_i$'s are polynomials of near best approximation of $f$ on   intervals which are slightly bigger than $I_i$), Whitney's inequality, \ineq{repl} and
 the fact that $w(x) \sim w_n(x)\sim w_n(x_{\nu_j})$ for each $x\in L_j$ and $w(x) \sim w_n(x)\sim w_n(x_{\nu_j-1})$ for each $x\in R_j$,
 we have
 \begin{eqnarray*}
\norm{  S_n -P_n }{p, w }^p
 & \leq &
 c \sum_{i, i-1 \in I^*}   \norm{ f -q_i}{\Lp(I_i\cup I_{i-1}) }^p  w_n(x_i) +
 c \sum_{j=1}^M    \norm{q_{\nu_j} -f}{\Lp(L_j)}^p  w_n (x_{\nu_j})  \\
 && +
  c \sum_{j=1}^M   \norm{f -q_{\nu_j+1}}{\Lp(L_j)}^p   w_n (x_{\nu_j}) +
 c \sum_{j=1}^M     \norm{q_{\nu_j-1} -f}{\Lp(R_j)}^p w_n (x_{\nu_j-1}) \\
 && +
  c \sum_{j=1}^M   \norm{f -q_{\nu_j}}{\Lp(R_j)}^p    w_n (x_{\nu_j-1}) \\
 & \leq &
c \sum_{i, i-1 \in I^*}  \w_r(f,|I_i\cup I_{i-1}|, I_i\cup I_{i-1})_p^p   w(x_i) +
c \sum_{j=1}^M  \w_r(f, |I_{\nu_j+1} \cup L_j|, I_{\nu_j+1} \cup L_j)_p^p w (x_{\nu_j}) \\
&& + c \sum_{j=1}^M  \w_r(f,|I_{\nu_j-1} \cup R_j| ,I_{\nu_j-1} \cup R_j)_p^p  w (x_{\nu_j-1})
 + c  \sum_{j=1}^M  \norm{f -q_{\nu_j}}{\Lp(I_{\nu_j}), w}^p  \\
 & \leq &
 c \widetilde \Omega_\varphi^r(f,   \theta/n)_{\Lp(\widetilde S), w}^p +  c \sum_{j=1}^M  E_r(f)_{\Lp(\Z_{\newconst_j, 1/n}^j), w}^p ,
\end{eqnarray*}
 where $\widetilde S:= \widetilde S(1/n) := [-1,1] \setminus \cup_{j=1}^M \Z_{c_0\newconst_j, 1/n}^j$ (note that $S(1/n) \subset \widetilde S(1/n)$ and so $\widetilde \Omega_\varphi^r(f,   \theta/n)_{\Lp(S), w}^p \leq \widetilde \Omega_\varphi^r(f,   \theta/n)_{\Lp(\widetilde S), w}^p$).

Now,
\[
P_n^{(\nu)}(x) = p_n^{(\nu)}(x) + \sum_{i=1}^{n-1} \sum_{l=0}^{\nu}  {\nu \choose l}     \left[ q_i^{(l)}(x)-q_{i+1}^{(l)}(x) \right] T_i^{(\nu-l)} (x) ,
\]
and so, for $r\leq \nu \leq \nu_0$ (which guarantees that $p_n^{(\nu)}\equiv 0$), we have using \lem{lem5.1} and estimate \ineq{pohidna}
\begin{eqnarray*}
\norm{ \dn^\nu P_n^{(\nu)}}{p, w }^p & \leq &
\int_{-1}^1 w (x) \dnx^{\nu p}  \left[ \sum_{i=1}^{n-1} \sum_{l=0}^{\nu}  {\nu \choose l}     \left| q_i^{(l)}(x)-q_{i+1}^{(l)}(x) \right| \cdot \left| T_i^{(\nu-l)} (x)\right| \right]^p dx \\
& \leq &
c \int_{-1}^1 w (x)  \dnx^{\nu p} \left[  \sum_{i=1}^{n-1} \sum_{l=0}^{\nu}  \norm{q_i^{(l)} -q_{i+1}^{(l)}}{\infty} |I_i|^{-\nu+l}  \psi_i(x)^\mu   \right]^p dx \\
& \leq &
c \int_{-1}^1 w (x)  \dnx^{\nu p} \left[  \sum_{i=1}^{n-1} \sum_{l=0}^{\nu}  \norm{q_i  -q_{i+1} }{\Lp(I_i)} |I_i|^{-\nu-1/p}  \psi_i(x)^{\mu - r+l+1}   \right]^p dx \\
& \leq &
c \int_{-1}^1 w (x)  \dnx^{\nu p} \left[  \sum_{i=1}^{n-1}    \norm{q_i  -q_{i+1} }{\Lp(I_i)} |I_i|^{-\nu-1/p}  \psi_i(x)^{\mu - r +1}   \right]^p dx \\
& \leq &
c \int_{-1}^1 w (x)  \dnx^{\nu p}    \sum_{i=1}^{n-1}    \norm{q_i  -q_{i+1} }{\Lp(I_i)}^p |I_i|^{-\nu p-1}  \psi_i(x)^{(\mu - r -1)p}   dx .
\end{eqnarray*}
Now, since $\dnx^2 \leq c \dn(x_i) \left( |x-x_i|+ \dn(x_i)\right)$ and $|I_i| \sim \dn(x_i)$, we have
\begin{eqnarray*}
\lefteqn{ \norm{ \dn^\nu P_n^{(\nu)}}{p, w }^p }\\
& \leq &
c \int_{-1}^1    w (x)  \sum_{i=1}^{n-1}    \norm{q_i  -q_{i+1} }{\Lp(I_i)}^p   \left[ \dn(x_i) \left( |x-x_i|+ \dn(x_i)\right) \right]^{\nu p/2}      |I_i|^{-\nu p-1} \psi_i(x)^{(\mu - r -1)p }   dx \\
& \leq &
c \int_{-1}^1   w (x)  \sum_{i=1}^{n-1}    \norm{q_i  -q_{i+1} }{\Lp(I_i)}^p   |I_i|^{-1}   \psi_i(x)^{(\mu - r -1-\nu/2)p }   dx ,
 \end{eqnarray*}
and exactly the same sequence of inequalities as above (only the power of $\psi_i$ is different) yields
\[
\norm{ \dn^\nu P_n^{(\nu)}}{p, w }  \leq c   \widetilde \Omega_\varphi^r(f,   \theta/n)_{\Lp(\widetilde S), w}^p +  c \sum_{j=1}^M  E_r(f)_{\Lp(\Z_{\newconst_j, 1/n}^j), w}^p   ,
\]
provided $(\mu - r -1-\nu_0/2)p-s \geq 2$.

Thus, if we pick $\mu = \mu(r, \nu_0, p, s)$ so that this (the most restrictive in this proof) inequality is satisfied
then, for each $m\in\N$,
we constructed a polynomial $ P_m$ of degree $< n_0 m$ with some $n_0\in\N$ depending only on $r$, $\nu_0$, $p$ and the doubling constant of the weight $w$, such that
\[
\max\left\{\norm{ \dm^\nu   P_m^{(\nu)}}{p, w },  \norm{f-  P_m}{p, w} \right\} \leq c   \widetilde \Omega_\varphi^r(f,   \theta/m)_{\Lp(\widetilde S(1/m)), w}^p +  c \sum_{j=1}^M  E_r(f)_{\Lp(\Z_{\newconst_j, 1/m}^j), w}^p .
 \]

Suppose now that $n\geq D n_0 =:N$, where $D$ is a natural number $\geq 10$ that  will be picked in a moment.  Then there exists $m\in\N$ such that $m n_0 \leq n < (m+1)n_0$ (note that $m\geq D$ and so $n_0 \leq  n/m \leq (1+1/D) n_0$).
  Then the polynomial $P_m$ is of degree $< n_0 m \leq n$ (\ie $  P_m \in \Poly_n$).

Now, we need to pick   $\theta$, $\newconst_j$'s, $c_0$ and $D$  so that
\be \label{hugemess}
 \widetilde \Omega_\varphi^r(f,   \theta/m)_{\Lp(\widetilde S(1/m)), w}^p +  c \sum_{j=1}^M  E_r(f)_{\Lp(\Z_{\newconst_j, 1/m}^j), w}^p   \leq c \widetilde \w_\varphi^r(f, A, \vartheta/n)_{p, w} .
\ee
This will complete the proof since $\dm(x) \sim \dnx$.

The estimate \ineq{hugemess} is satisfied if, in particular, for   $1\leq j \leq M$,
\[
 \Z_{A, \vartheta/n}^j \subset \Z_{c_0 \newconst_j, 1/m}^j ,                  \quad \Z_{\newconst_j, 1/m}^j \subset \Z_{2A, \vartheta/n}^j \andd \theta/m \leq \vartheta/n
\]
(see properties of the moduli in Section~\ref{moduli}).
We pick $\theta$ so that $\theta \leq  \vartheta/(2 n_0)$, and to finish the proof we need to make sure that the following holds:
\be \label{makesure}
c_0 \newconst_j \dm(z_j) \geq A \rho(\vartheta/n, z_j) \andd  \newconst_j \dm(z_j) \leq 2A \rho(\vartheta/n, z_j), \quad 1\leq j \leq M .
\ee
Recall that $\newconst_j$ is assumed to be $\leq 1/10$, and that it cannot depend on $m$ or $n$ (but can depend on $n_0$).
We also note that we can assume that $\vartheta$ is small since
\[
\widetilde \w_\varphi^r(f, A, \vartheta_1/n)_{p, w} \leq c \widetilde \w_\varphi^r(f, A, \vartheta_2/n)_{p, w}, \quad \mbox{\rm if }\; \vartheta_1 \leq \vartheta_2 .
\]
So we assume that $\vartheta \leq 1$ is such that it guarantees that $\newconst_j \leq 1/10$ (see the estimates below). Alternatively, we can guarantee this by letting $n_0$ be sufficiently large.

Hence, if $z_j = \pm 1$ the inequalities in \ineq{makesure} become
\[
A \vartheta^2  \leq c_0 \newconst_j {n^2 \over m^2}  \andd   \newconst_j {n^2 \over m^2} \leq 2A \vartheta^2  ,
\]
and recalling that $n_0 \leq  n/m \leq (1+1/D) n_0$, we   now pick $\newconst_j$ so that
\[
{A \vartheta^2 \over c_0 n_0^2} \leq \newconst_j  \leq {2 A \vartheta^2 \over (1+1/D)^2 n_0^2} .
\]
For example, with $c_0:= 0.9$ we set $\newconst_j :=  A \vartheta^2/(0.9 n_0^2)$ (recall that $D\geq 10$).

We now let $D\geq 10$   be so large that $D\geq 10/\varphi(z_j)$ for all $1\leq j \leq M$, for which $z_j \neq \pm 1$ (so, clearly, $D$ depends only on the weight $w$). Recalling that $n\geq m \geq D$, this implies that, if $z_j \neq \pm 1$, then
 \[
 \varphi(z_j)/m \leq   \rho(1/m, z_j) \leq 1.1  \varphi(z_j)/m \andd \vartheta\varphi(z_j)/n \leq   \rho(\vartheta/n, z_j) \leq 1.1 \vartheta\varphi(z_j)/n .
 \]
 Therefore, to guarantee that the inequalities in  \ineq{makesure} hold it is sufficient to pick $\newconst_j$ so  that
\[
{1.1 A \vartheta \over c_0} \leq \newconst_j {n \over m} \andd 1.1 \newconst_j {n \over m} \leq 2A \vartheta ,
\]
which, in turn, follows from
 \[
{1.1 A \vartheta \over c_0 n_0} \leq \newconst_j    \leq { 2A \vartheta \over 1.1 (1+1/D)n_0 } .
\]
Now, recall that we already picked  $c_0 = 0.9$, and let
\[
\newconst_j  := {1.1 A \vartheta \over 0.9 n_0},
\]
for all $1\leq j \leq M$ such that $z_j\neq \pm 1$.
\end{proof}

\sect{Remez and Markov-Bernstein type theorems and applications} \label{remezmb}

Most results in this section are based on a well known idea to
 use  Remez type results to
go back and forth between $\varphi(x)$ and $\varphi(x)+1/n$ in various estimates involving polynomials   and on the fact that
$\norm{P_n}{p, w} \sim \norm{P_n}{p, w_n}$ for polynomials from $\Poly_n$ (G. Mastroianni and V. Totik deserve most credit for this observation).
Note that most of them are given for general doubling weights without the requirement that they belong to $\W(\Z)$ (but see a comment following the statement of \cor{cordw}).

\subsection{Remez type theorems and applications}

We start with he following crucial lemma that states that the norms of polynomials of degree $< n$ are essentially the same irrespectively of whether the weight $w$ or the weight $w_n$ is used (where $w$ is a doubling weight).

\begin{lemma} \label{auxlemma}
Let $w$ be a doubling weight on $[-1,1]$. Then for every $0<  p < \infty$ there is a constant $c_0$ depending only on $p$ and the doubling constant of $w$ such that, for every polynomial $P_n\in\Poly_n$,
\[
c_0^{-1} \norm{P_n}{p, w} \leq \norm{P_n}{p, w_n} \leq c_0  \norm{P_n}{p, w} .
\]
\end{lemma}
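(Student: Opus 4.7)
The plan is to deduce both directions from suitable pointwise inequalities for polynomials, combined with a Fubini argument.

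For the first direction $\norm{P_n}{p,w}^p\leq c\norm{P_n}{p,w_n}^p$, I start from the unweighted Nikolskii--Bernstein inequality
\[
|P_n(x)|^p \leq \frac{c}{\dnx}\int_{x-\dnx}^{x+\dnx}|P_n(u)|^p\,du,\qquad P_n\in\Poly_n,\ x\in[-1,1],
\]
which holds with $c$ depending only on $p$. Multiplying by $w(x)$, integrating over $[-1,1]$, and exchanging the order of integration, I am left with the inner integral $I(u):=\int_{|x-u|\leq\dnx}w(x)\dnx^{-1}\,dx$. The listed property $\dn(y)^2\leq 4\dnx(|x-y|+\dnx)$, applied with both orderings of $(x,y)$, yields $\dnx\sim\dn(u)$ throughout the region $|x-u|\leq\dnx$; combined with the doubling of $w$ this reduces $I(u)$ to $c\,w_n(u)$, giving the desired estimate.

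For the reverse direction $\norm{P_n}{p,w_n}^p\leq c\norm{P_n}{p,w}^p$, the unweighted Nikolskii inequality is insufficient, because a symmetric Fubini would require a bound of the form
\[
\frac{1}{\dn(u)}\int_{|x-u|\leq c\dn(u)}|P_n(x)|^p\,dx \leq c|P_n(u)|^p,
\]
which fails at zeros of $P_n$ (e.g. $P_n(x)=(x-u)^n$). I replace it by the weighted Bernstein--Markov inequality of Mastroianni--Totik,
\[
|P_n(x)|^p\, w_n(x) \leq \frac{c}{\dnx}\int_{x-\dnx}^{x+\dnx}|P_n(u)|^p\,w(u)\,du,
\]
which holds for every doubling weight $w$ and every $P_n\in\Poly_n$ with $c$ depending only on $p$ and the doubling constant of $w$, and is established in \cite{mt2000, mt2001} via a Remez-type inequality for doubling weights. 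Integrating this pointwise estimate in $x$ and swapping the order of integration exactly as in the first direction reduces matters to the same inner integral $\int_{|x-u|\leq\dnx}\dnx^{-1}\,dx$, which is uniformly bounded by the $\dnx\sim\dn(u)$ argument above; this yields $\norm{P_n}{p,w_n}^p\leq c\norm{P_n}{p,w}^p$.

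The main obstacle is the second direction: unlike the first, it cannot be reduced to unweighted polynomial inequalities alone, and one is forced to invoke the considerably deeper weighted pointwise estimate. Once that tool is available, the Fubini computation becomes symmetric and both directions share the same routine inner-integral bound, yielding the equivalence $\norm{P_n}{p,w}\sim\norm{P_n}{p,w_n}$ with the claimed dependence of constants.
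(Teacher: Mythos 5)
Your first half is fine: the unweighted pointwise Nikolskii bound plus Fubini, $\rho_n(x)\sim\rho_n(u)$ on the region $|x-u|\le\rho_n(x)$, and doubling do give $\norm{P_n}{p,w}\le c\norm{P_n}{p,w_n}$; this is the standard ``easy'' direction (state the pointwise inequality with the integral over $[x-\rho_n(x),x+\rho_n(x)]\cap[-1,1]$ so that after Fubini the variable $u$ stays where $w_n(u)$ is defined). The genuine gap is the second half. The pointwise estimate you invoke,
\[
|P_n(x)|^p\,w_n(x)\ \le\ \frac{c}{\rho_n(x)}\int_{x-\rho_n(x)}^{x+\rho_n(x)}|P_n(u)|^p\,w(u)\,du ,
\]
is not a theorem of Mastroianni--Totik and is in fact false with $c$ depending only on $p$ and the doubling constant. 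Written on $J_x:=[x-\rho_n(x),x+\rho_n(x)]$ it says $|P_n(x)|^p\,w(J_x)\le c\int_{J_x}|P_n|^p w$, and nothing constrains $P_n$ outside $J_x$; after rescaling $J_x$ to a unit interval, $P_n$ is an arbitrary polynomial of degree $n$ there, so it can have a ``needle'' of width $\rho_n(x)/n$ at $x$. Concretely, take $w(u)=|u|^\gamma$ with $\gamma>0$, $x=0$, and $P_n(u):=R(u/\rho_n(0))$, where $R\in\Poly_n$ is fast decreasing with $R(0)\ge 1$ and $|R(v)|\le C_N(1+n|v|)^{-N}$ on $[-1,1]$ (such polynomials exist, e.g.\ from shifted Jackson kernels in $\theta=\arccos v$). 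Then the left-hand side is $\ge w_n(0)\sim\rho_n(0)^{\gamma}$, while the right-hand side is $\lesssim \rho_n(0)^{-1}\int_{|u|\le\rho_n(0)}(1+n^{2}|u|)^{-Np}|u|^{\gamma}du\sim \rho_n(0)^{\gamma}\,n^{-\gamma-1}$ once $Np>\gamma+1$, so the ratio tends to $0$. This is exactly the same obstruction you yourself pointed out for the naive reverse Nikolskii inequality: the weight's mass inside $J_x$ may sit where $P_n$ is tiny, and passing to $w_n$ on the left does not remove it (with $w_n$ also on the right the inequality is true but the Fubini argument becomes circular).

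So the hard inequality $\norm{P_n}{p,w_n}\le c\norm{P_n}{p,w}$ — which is the real content of the lemma — is not established by your argument, and no pointwise-plus-Fubini scheme of this symmetric type can work. The paper's proof is a reduction: for $1\le p<\infty$ this is \cite[Theorem 7.2]{mt2000}, obtained there from the trigonometric norm equivalence by a transfer argument that never uses $p\ge1$, and since the trigonometric result holds for all $0<p<\infty$ by \cite[Theorem 2.1]{e}, the full range follows. If you want a more self-contained route for the hard direction, one can use the polynomials $\Q_n\in\Poly_{cn}$ with $\Q_n^p\sim w_n$ from \cite[(7.34)--(7.36)]{mt2000} together with Remez-type inequalities, but some genuinely global input of this kind is unavoidable.
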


In the case $1\leq p <\infty$, this is  \cite[Theorem 7.2]{mt2000}.
It is obtained in \cite{mt2000} as a corollary of an analogous result for trigonometric polynomials (see \cite[Theorem 3.1]{mt2000}) with a method that does not depend on whether or not $p$ is greater or less than $1$.
Since the result for trigonometric polynomials holds for all $0<p<\infty$ (see \cite[Theorem 2.1]{e}), we conclude that \lem{auxlemma} is valid.

The following Remez inequality for doubling weights holds.

\begin{theorem}[\mbox{\cites{e, mt2000}}]
Let $W$ be a $2\pi$-periodic function which is a doubling weight on $[0, 2\pi]$, and let $0<p<\infty$ be arbitrary. Then there is a constant $C>0$ depending only on $p$ and on the doubling constant of $W$ so that if $T_n$ is a trigonometric polynomial of degree at most $n$ and $E$ is a measurable subset of $[0, 2\pi]$ of measure at most $\Lambda/n$, $1\leq \Lambda\leq n$, that is a union of intervals of length at least $c/n$, then
\[
\int_{-\pi}^\pi |T_n(u)|^p W(u) du  \leq \left( {C \over c }\right)^{\Lambda} \int_{[0, 2\pi]\setminus E} |T_n(u)|^p W(u) du .
\]
\end{theorem}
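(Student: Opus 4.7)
The plan is to deduce this weighted Remez inequality from the classical unweighted $L_p$ Remez inequality for trigonometric polynomials, with the doubling property of $W$ serving as the bridge between the two settings. The argument has three stages: (i) invoke the unweighted $L_p$ Remez for $T_n$; (ii) localize to pairs $(I, I^*)$, where $I$ is a component of $E$ and $I^*\subset [0,2\pi]\setminus E$ is an adjacent ``buddy'' of comparable length, and apply the unweighted Remez on each pair; and (iii) use doubling of $W$ to upgrade the local unweighted comparisons into weighted ones, then accumulate over the $\leq \Lambda/c$ components of $E$ to produce the exponential factor $(C/c)^\Lambda$.

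Concretely, I would begin by invoking the classical unweighted $L_p$ Remez inequality of Erdélyi (\cite{e}): for any $T_n$ and any measurable $E\subset [0,2\pi]$ with $|E|\leq \Lambda/n$, $1\leq \Lambda\leq n$,
\[
\int_0^{2\pi} |T_n(u)|^p\, du \;\leq\; K^{\Lambda}\int_{[0,2\pi]\setminus E} |T_n(u)|^p\, du,
\]
with $K$ depending only on $p$. This encodes the structural fact that a trigonometric polynomial of degree $\leq n$ cannot cram its mass into a set of measure $\ll 1/n$. Applied locally to a small neighborhood $I\cup I^*$, it yields the pair-wise comparison
\[
\int_{I} |T_n|^p\, du \;\leq\; K' \int_{I^*} |T_n|^p\, du,
\]
with $K'$ depending only on $p$ and on the ratio $|I|/|I^*|$. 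The existence of disjoint buddies $I_j^*$ is guaranteed by the hypothesis $|I_j|\geq c/n$, which bounds the number of components of $E$ by $\Lambda/c$ and leaves enough room in the complement.

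For the weighted upgrade, I would use the doubling property. Since $I$ and $I^*$ are adjacent and of comparable length $\sim c/n$, iterated doubling gives $W(I)\sim W(I^*)$ with constants depending only on the doubling constant of $W$ and the ratio $|I|/|I^*|$. Moreover, \lem{auxlemma} (or more precisely its trigonometric counterpart, which is the underlying fact from which the algebraic statement is derived) shows that $\|T_n\|_{p,W}\sim \|T_n\|_{p,W_n}$, where $W_n$ is the $1/n$-averaged version of $W$; since $W_n$ is essentially constant on $I\cup I^*$, it can be pulled out of the unweighted Remez estimate to give the local weighted inequality $\int_I |T_n|^p W\, du \leq C_2 \int_{I^*} |T_n|^p W\, du$ with $C_2$ depending only on $p$ and the doubling constant of $W$. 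Summing over the $\leq \Lambda/c$ components of $E$ and iterating yields the claimed bound with the base $C/c$ absorbing $c$ and the doubling constants.

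The main obstacle I foresee is twofold. Technically, the case $0<p<1$ precludes duality and standard $L_p$-Nikolskii/Bernstein arguments, so the passage from the $L_\infty$ Remez (which is easier) to the $L_p$ Remez must be done by Erdélyi's direct substitution argument, which works uniformly for all $0<p<\infty$. Quantitatively, the subtle point is ensuring that the final exponent is linear in $\Lambda$ rather than $\Lambda\log(1/c)$: this is guaranteed precisely because the local factor $C_2$ does not depend on how finely $E$ is cut, a scale-invariance feature provided by the doubling property of $W$ (and by the uniformity in $p$ of the trigonometric Remez inequality of Erdélyi).
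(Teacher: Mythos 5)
First, a remark on the premise: the paper does not prove this theorem. It is stated with the citation tag \cites{e, mt2000} and invoked as a known result of Erd\'elyi and Mastroianni--Totik; there is no proof in the paper to compare against. So the question is whether your blind attempt constitutes a correct proof on its own.

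It does not, and the gap is at the step you call ``applied locally.'' The unweighted Remez inequality is a global statement comparing $\int_0^{2\pi}|T_n|^p$ to $\int_{[0,2\pi]\setminus E}|T_n|^p$; it says nothing about where in $E^c$ the mass lives, and it does not localize to a pair of adjacent short intervals. The pairwise estimate you assert, namely $\int_I|T_n|^p\,du\le K'\int_{I^*}|T_n|^p\,du$ for adjacent $I,I^*$ of length $\sim c/n$ with $K'$ depending only on $p$ and $|I|/|I^*|$, is in fact false. Take $T(x)=\sin^{m}\bigl((x-x_0)/2\bigr)$, a trigonometric polynomial of degree $n\sim m/2$, with $I^*=[x_0-\tfrac{c}{2n},x_0+\tfrac{c}{2n}]$ and $I=[x_0+\tfrac{c}{2n},x_0+\tfrac{3c}{2n}]$, so $|I|=|I^*|=c/n$. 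On $I^*$ one has $|T|\le(c/(4n))^{m}$, while on the outer half of $I$ one has $|T|\ge(c/(\pi n))^{m}$, so $\int_I|T|^p/\int_{I^*}|T|^p\gtrsim(4/\pi)^{mp}\to\infty$. Thus no $K'$ independent of $n$ exists. This is not a minor technicality: the whole scheme of ``buddies'' and accumulation over components rests on this false local comparison, so the weighted upgrade and the accounting that is supposed to produce the factor $(C/c)^\Lambda$ have nothing to stand on. (Two smaller issues, which would matter even if the local step held: tightly packed components of $E$ need not admit disjoint adjacent buddies of comparable length in $E^c$, and iterating a per-component constant $C_2$ over $\le\Lambda/c$ components gives $C_2^{\Lambda/c}$, which is a different and for small $c$ much worse shape than $(C/c)^\Lambda$.)

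The argument actually used in \cite{mt2000} (and the one mirrored in this paper's short proof of Corollary~\ref{cordw}) avoids any local comparison. One replaces $W$ by the averaged weight $W_n$, uses the Mastroianni--Totik factorization $W_n(x)\sim|Q_n(x)|^p$ for a trigonometric polynomial $Q_n$ of degree $O(n)$, so that $\|T_n\|_{p,W}\sim\|T_n\|_{p,W_n}\sim\|T_nQ_n\|_p$ (this is the content of Lemma~\ref{auxlemma} and its trigonometric source), and then applies the \emph{global} unweighted Remez inequality once to the single polynomial $T_nQ_n$ of degree $O(n)$. After enlarging $E$ by $O(1/n)$ so that the averaging can be undone on the complement, one returns from $W_n$ to $W$. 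No pairwise localization is needed, and the exponent $\Lambda$ emerges directly from the unweighted Remez applied to $T_nQ_n$.
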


%
%
%

The following is a corollary for algebraic polynomials (see \cite{mt2000} in the case $1\leq p <\infty$, the case $0<p<1$ is analogous).


\begin{corollary} \label{cordw}
Let  $w$ be a doubling weight and $0 < p <  \infty$.
If $E \subset [-1,1]$ is a union of at most $K$ intervals  and $ \int_E (1-x^2)^{-1/2} dx   \leq \Lambda/n$, $\Lambda \leq n$, then for each $p_n\in\Poly_n$, we have
\[
\int_{-1}^1 |p_n(x)|^p w(x)\, dx \leq C \int_{[-1,1]\setminus E} |p_n(x)|^p w(x) \, dx ,
\]
where the constant $C$ depends only on $\Lambda$, $K$, $p$ and the doubling constant of $w$.
\end{corollary}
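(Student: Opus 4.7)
The plan is to reduce to the trigonometric Remez theorem quoted just above via the standard substitution $x = \cos\theta$.

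First, set $T_n(\theta) := p_n(\cos\theta)$, which is an even trigonometric polynomial of degree at most $n$, and define $W(\theta) := w(\cos\theta)|\sin\theta|$ on $[-\pi,\pi]$ (even in $\theta$), extended $2\pi$-periodically. Since $dx = -\sin\theta\,d\theta$, we have
\[
\int_{-1}^1 |p_n(x)|^p w(x)\,dx = \int_0^\pi |T_n(\theta)|^p W(\theta)\,d\theta = \tfrac{1}{2}\int_{-\pi}^\pi |T_n(\theta)|^p W(\theta)\,d\theta,
\]
and likewise for the integral over $[-1,1]\setminus E$. The key structural point to verify is that $W$ is a doubling weight on $[0,2\pi]$ with doubling constant depending only on that of $w$. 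This follows because the change of variables $x = \cos\theta$ sends any interval $J \subset [0,\pi]$ to an interval $I = \cos(J) \subset [-1,1]$ in such a way that $W(J) \sim w(I)$, and the dilate $2J$ is contained in a bounded number of iterated dilates of $I$ under $w$, giving the doubling inequality via finitely many applications of the doubling condition for $w$.

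Second, let $\tilde E \subset [0,\pi]$ be the preimage of $E$ under $x\mapsto \cos\theta$. Then $\tilde E$ is a union of at most $K$ intervals and, by change of variables,
\[
|\tilde E| = \int_{\tilde E} d\theta = \int_E \frac{dx}{\sqrt{1-x^2}} \leq \Lambda/n.
\]
Reflecting in $0$, the set $\tilde E\cup(-\tilde E)\subset [-\pi,\pi]$ is a union of at most $2K$ intervals of total measure at most $2\Lambda/n$. To match the hypothesis of the trigonometric Remez theorem, I enlarge each component whose length is less than $c_1/n$ up to length $c_1/n$, where $c_1 := 1/(4K+4)$; this increases the total measure by at most $2K\cdot c_1/n \leq 1/(2n)$. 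The resulting set $\tilde E'$ is a union of at most $2K$ intervals, each of length $\geq c_1/n$, with $|\tilde E'| \leq (2\Lambda+1)/n =: \Lambda'/n$.

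Third, provided $\Lambda' \leq n$, the trigonometric Remez theorem applied to $T_n$, $W$, $\tilde E'$ gives
\[
\int_{-\pi}^\pi |T_n|^p W\,d\theta \leq (C_0/c_1)^{\Lambda'}\int_{[-\pi,\pi]\setminus \tilde E'} |T_n|^p W\,d\theta,
\]
and since $\tilde E \cup (-\tilde E) \subset \tilde E'$, the right-hand integrand vanishes on the preimage of $E$, so transferring back via $x=\cos\theta$ and symmetry yields the required inequality with $C = (C_0/c_1)^{\Lambda'}$, which depends only on $\Lambda$, $K$, $p$, and the doubling constant of $w$. In the degenerate case $\Lambda' > n$ (equivalently, $n$ is bounded in terms of $\Lambda$ and $K$), the polynomial $p_n$ has bounded degree and the inequality reduces to a compactness statement on a finite-dimensional space, handled by an absolute constant depending on $\Lambda,K,p,w$.

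The main obstacle is the doubling property of $W$ on $[0,2\pi]$: while standard in the Mastroianni--Totik framework, it requires comparing $|\cos\theta_1 - \cos\theta_2|$ to $|\theta_1-\theta_2|\bigl(\max(\sin\theta_1,\sin\theta_2)+|\theta_1-\theta_2|\bigr)$ and carefully tracking how dilates of intervals on the circle correspond to dilates on $[-1,1]$. Everything else is bookkeeping: a change of variables, an enlargement of components to meet the length requirement, and a direct appeal to the trigonometric Remez inequality.
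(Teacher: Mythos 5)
Your overall route --- the substitution $x=\cos\theta$, the transferred weight $W(\theta)=w(\cos\theta)|\sin\theta|$, and an appeal to the quoted trigonometric Remez theorem --- is the standard reduction and is essentially what the paper itself relies on (it gives no proof beyond citing Mastroianni--Totik for $1\leq p<\infty$, noting that $0<p<1$ is analogous, and offering a separate short argument only for $w\in\W(\Z)$ via the unweighted Remez inequality and $w_n\sim \Q_n^p$). Your main case is sound: the preimage of $E$ has at most $K$ components of total length $\leq\Lambda/n$, reflection gives at most $2K$ components, the enlargement with $c_1=1/(4K+4)$ costs at most $1/(2n)$, and the trigonometric theorem then gives a constant depending only on $\Lambda$, $K$, $p$ and the doubling constant --- granting the standard fact, which you only sketch, that $W$ is doubling on the circle with constant controlled by that of $w$.

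The genuine gap is your ``degenerate case'' $\Lambda'=2\Lambda+1>n$. It is not a marginal corner: the hypothesis allows every $\Lambda\leq n$, so the whole range $\Lambda>(n-1)/2$ (in particular $\Lambda=n$ for every $n$) falls into it, because the reflection doubles the measure and the quoted trigonometric theorem requires measure at most $\Lambda'/n\leq 1$; raising the nominal degree of $T_n$ does not help, since the measure threshold is absolute. Decisively, your fallback --- compactness on the finite-dimensional space $\Poly_n$, $n\leq 2\Lambda$ --- produces a constant depending on the individual weight $w$, not only on its doubling constant, which is strictly weaker than the statement and would break its later use (in \thm{mainauxthm} the corollary is applied to the varying weights $(w\varphi^{\eta p})_n$ and $w_n\varphi^{\eta p}$, where only the doubling constants are uniformly controlled); no compactness argument can give uniformity over the class of weights with a fixed doubling constant. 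The case is fixable, but by an argument rather than compactness: for $n\leq 2\Lambda$, the Lebesgue measure of $E$ is at most $\int_E(1-x^2)^{-1/2}dx\leq\Lambda/n\leq 1$, so $[-1,1]\setminus E$ contains an interval $J$ with $|J|\geq 1/(K+1)$; the classical unweighted Remez (or Chebyshev) inequality for degree $n\leq 2\Lambda$ yields a subinterval $J'\subset J$ with $|J'|\geq c(\Lambda,K)$ on which $|p_n|\geq c(\Lambda,K)\norm{p_n}{\infty}$, and finitely many applications of the doubling condition give $w(J')\geq c(\Lambda,K,L)\,w([-1,1])$, whence $\int_{[-1,1]\setminus E}|p_n|^pw\,dx\geq\int_{J'}|p_n|^pw\,dx\geq c\int_{-1}^1|p_n|^pw\,dx$ with the correct dependence. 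With that replacement your proof is complete.
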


We note that there is a simple proof showing that \cor{cordw} is satisfied for doubling weights from the class $\W(\Z)$. This follows from the usual unweighted Remez inequality (\ie \cor{cordw} with $w\equiv 1$) and the fact that $w_n(x) \sim \Q_n(x)^p$, where $0<p<\infty$ and $\Q_n \in \Poly_n$ (see \cite[(7.34)-(7.36)]{mt2000} or \cite[Theorem 4.1]{k-weighted}).

%

Indeed, suppose that $E \subset [-1,1]$ is a union of at most $K$ intervals  and $ \int_E (1-x^2)^{-1/2} dx   \leq c/n$. We enlarge $E$ to $E \cup \widetilde E$, where
 $\widetilde E := \Z_{1, 1/n} =  [-1,1]\cap \cup_{j=1}^M [z_j -  \dn(z_j), z_j +   \dn(z_j)]$ and note that 
\[
\int_{\widetilde E} (1-x^2)^{-1/2} dx \leq  \sum_{j=1}^M \int_{z_j -  \dn(z_j)}^{z_j +   \dn(z_j)}(1-x^2)^{-1/2} dx \leq c/n .
\]
Then, using \lem{auxlemma} we have
\begin{eqnarray*}
\norm{P_n}{p, w} &\sim&  \norm{P_n}{p, w_n} \leq c \norm{P_n\Q_n}{p } \leq c \norm{P_n\Q_n}{\Lp([-1,1]\setminus (E\cup \widetilde E))}
 \leq   c \norm{P_n}{\Lp([-1,1]\setminus (E\cup \widetilde E)), w_n}\\
 & \leq& c \norm{P_n}{\Lp([-1,1]\setminus (E\cup \widetilde E)), w}    \leq c \norm{P_n}{\Lp([-1,1]\setminus E), w} ,
\end{eqnarray*}
since $w\sim w_n$ on $[-1,1]\setminus \widetilde E$ by \lemp{v}.

One of the applications of \cor{cordw} is the following result which is quite useful in the proofs.

\begin{theorem} \label{mainauxthm}
Let $w$ be a doubling weight, $0<p<\infty$, $n\in\N$, $0\leq \mu \leq n$. Then, for any  $P_n \in \Poly_n$,
\be \label{mainauxineq1}
 \norm{\varphi ^\mu P_n}{p, w} \sim \norm{\varphi ^\mu P_n}{p, w_n}
\ee
and
\be \label{mainauxineq2}
\norm{\lambda_n^\mu P_n}{p, w}  \sim \norm{\lambda_n^\mu P_n}{p, w_n}    ,
\ee
where $\lambda_n(x) := \max\left\{  \sqrt{1-x^2} , 1/n \right\}$,
and
the equivalence constants depend only on $p$  and the doubling constant of $w$, and are independent of $\mu$.
\end{theorem}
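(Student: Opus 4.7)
The plan is to deduce \ineq{mainauxineq1} from \lem{auxlemma} applied at degree $2n$, treating even $\mu$ directly and reducing odd $\mu$ to the trigonometric analog of \lem{auxlemma} via the substitution $x=\cos\theta$, and then to obtain \ineq{mainauxineq2} from \ineq{mainauxineq1} by a Remez-type localization using \cor{cordw}. Assume first that $\mu$ is an integer. If $\mu=2k$, then $\varphi^\mu P_n=(1-x^2)^k P_n$ is a polynomial of degree $\leq n+2k\leq 2n$, so \lem{auxlemma} applied at degree $2n$ combined with the uniform equivalence $w_{2n}(x)\sim w_n(x)$ (since $2n\sim n$, this is one of the bullet properties listed in Section~\ref{propdouble}) yields the claim. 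If $\mu$ is odd, apply $x=\cos\theta$: then $\varphi=\sin\theta$, $dx=\sin\theta\,d\theta$, and $\sin^\mu\theta\cdot P_n(\cos\theta)$ is a trigonometric polynomial in $\theta$ of degree $\leq n+\mu\leq 2n$, giving
\[
\norm{\varphi^\mu P_n}{p,w}^p=\int_0^\pi|\sin^\mu\theta\cdot P_n(\cos\theta)|^p W(\theta)\,d\theta,\qquad W(\theta):=w(\cos\theta)|\sin\theta|,
\]
where $W$ extends to a $2\pi$-periodic doubling weight (even extension about $0$ and $\pi$) whose doubling constant is controlled by that of $w$. The trigonometric version of \lem{auxlemma} (\cite[Theorem 2.1]{e}, valid for all $0<p<\infty$) now yields the same integral against $W_{2n}$, and reversing the change of variables recovers $\norm{\varphi^\mu P_n}{p,w_n}^p$ once one verifies $W_{2n}(\theta)\sim w_n(\cos\theta)\sin\theta$.

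For \ineq{mainauxineq2}, set $E:=\{x\in[-1,1]:\varphi(x)<1/n\}$, a union of (at most) two intervals near $\pm 1$ of total measure $O(1/n^2)$ satisfying $\int_E(1-x^2)^{-1/2}\,dx\leq c/n$. On $[-1,1]\setminus E$ one has $\lambda_n=\varphi$, while on $E$ one has $\lambda_n=1/n$ and $\varphi<1/n$. For any doubling weight $W$ (to be applied with $W=w$ and with $W=w_n$, both of which are doubling with doubling constant controlled by that of $w$),
\[
\int_E\lambda_n^{p\mu}|P_n|^p W \,=\, n^{-p\mu}\int_E|P_n|^p W \,\leq\, Cn^{-p\mu}\int_{[-1,1]\setminus E}|P_n|^p W \,\leq\, C\int_{[-1,1]\setminus E}\varphi^{p\mu}|P_n|^p W,
\]
the first inequality by \cor{cordw} (since $E$ is a union of two intervals with $\int_E(1-x^2)^{-1/2}\leq c/n$) and the second because $\varphi\geq 1/n$ on $[-1,1]\setminus E$. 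Since $\lambda_n=\varphi$ on $[-1,1]\setminus E$, this yields $\norm{\lambda_n^\mu P_n}{p,W}\sim\norm{\varphi^\mu P_n}{p,W}$, and \ineq{mainauxineq2} then follows from \ineq{mainauxineq1} applied with $W=w$ and $W=w_n$.

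The principal technical point is the uniform identification $W_{2n}(\theta)\sim w_n(\cos\theta)\sin\theta$. For $\theta$ bounded away from $0,\pi$, the trigonometric window $[\theta-c/n,\theta+c/n]$ maps under $x=\cos\theta$ to an $x$-interval of width $\sim\sin\theta/n\sim\dnx$, and the doubling property of $w$ immediately delivers the comparison; near $\theta=0,\pi$ the $x$-window degenerates to size $\sim 1/n^2$ (which again matches $\dnx$ there) and a short case split plus doubling gives the equivalence with constants depending only on the doubling constant of $w$. Since \lem{auxlemma} is invoked only at the fixed degree $2n$ (independently of $\mu$) and the Remez step uses an $E$ independent of $\mu$, all the resulting constants are uniform in $\mu\in[0,n]$, as required.
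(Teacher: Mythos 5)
Your argument for \ineq{mainauxineq2} is fine: the disjoint splitting over $E=\{\varphi<1/n\}$ and its complement, with \cor{cordw} applied to $P_n$ itself, gives constants independent of $\mu$ and works for all $0<p<\infty$ (it is a pleasant alternative to the paper's route, which instead bounds $\norm{\lambda_n^\mu P_n}{p,w}$ by $n^{-\mu}\norm{P_n}{p,w}+\norm{\varphi^\mu P_n}{p,w}$ and reassembles using $n^{-1}\le\lambda_n$, $\varphi\le\lambda_n$). The genuine gap is in \ineq{mainauxineq1}, exactly at what you call the principal technical point. The identification $W_{2n}(\theta)\sim w_n(\cos\theta)\sin\theta$ is false near $\theta=0,\pi$. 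With $x=\cos\theta$ and $W(\theta)=w(\cos\theta)|\sin\theta|$, the averaging window $[\theta-c/n,\theta+c/n]$ maps to an $x$-interval $J_x\ni x$ of length $\sim\dn(x)$, so
\[
W_{2n}(\theta)\;\sim\; n\int_{J_x} w(u)\,du \;\sim\; n\,\dn(x)\,w_n(x)\;\sim\;\bigl(\sin\theta+1/n\bigr)\,w_n(\cos\theta);
\]
for instance $W_{2n}(0)\sim n\int_{1-cn^{-2}}^{1}w\sim w_n(1)/n$, whereas your claimed expression vanishes there. Consequently, reversing the change of variables does not return $\norm{\varphi^\mu P_n}{p,w_n}$ but the integral of $|\varphi^\mu P_n|^p$ against $w_n(\varphi+1/n)/\varphi$, and the factor $(\varphi+1/n)/\varphi$ is unbounded on $\{\varphi<1/n\}$. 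One inequality of \ineq{mainauxineq1} survives (the extra factor is $\ge1$), but the other direction needs an additional Remez step — e.g. discard $\{\sin\theta\le1/n\}$ using the trigonometric Remez theorem with the doubling weight $W_{2n}$, or equivalently restrict to $S_n=[-1+n^{-2},1-n^{-2}]$ where $\varphi_n\sim\varphi$, which is precisely the detour the paper's proof takes. A ``short case split plus doubling'' cannot close this, because the mismatch is not a doubling issue but the Jacobian $\sin\theta$ degenerating below the averaging scale $1/n$.

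A second gap is coverage: you only treat integer $\mu$. The statement does not restrict $\mu$ to integers, the paper's proof explicitly handles real $\mu$ (it writes $\mu=m+\eta$ with $m=2\lfloor\mu/2\rfloor$ and $\eta\in[0,2)$, absorbs $\varphi^{\eta p}$ into the weight, and uses $(w\varphi^{\eta p})_n\sim w_n\varphi_n^{\eta p}$ together with \lem{auxlemma} and \cor{cordw} on $S_n$), and the theorem is later invoked with arbitrary real $\mu\ge0$ in \cor{newjust1}. Your even case coincides with the paper's; to cover the remaining cases you would either have to add the weight-absorption step just described (at which point the trigonometric transfer becomes unnecessary) or repair the transfer argument as above and still say something about fractional $\mu$, e.g. by writing $\varphi^\mu=\varphi^m\cdot\varphi^\eta$ and treating $\varphi^{\eta p}$ as part of the weight, with constants uniform in $\eta\in[0,2)$.
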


 \begin{proof} The idea used in this proof is well known. Since $w\sim w_n$ and $\lambda_n \sim \varphi$ in the ``middle'' of $[-1,1]$ the quantities are equivalent by the Remez type result allowing us to replace $[-1,1]$ by $[-1+n^{-2}, 1-n^{-2}]$. We have to be careful with the constants though making sure that they do not depend on $\mu$.

 We start with the equivalence \ineq{mainauxineq1}.
Note that if $\mu$ is an even integer, then this equivalence immediately follows from \lem{auxlemma}  since $\varphi^\mu P_n \in \Poly_{n+\mu} \subset \Poly_{2n}$ and $w_n \sim w_{2n}$.
It is now clear how to proceed. We let $m:= 2\lfloor \mu/2 \rfloor$. Then $m$ is an even integer such that $\mu-2 < m \leq \mu$ (note that $m=0$ if $\mu<2$), and
$Q_{n+m} := \varphi^m P_n \in \Poly_{n+m} \subset \Poly_{2n}$.

Since $w$ is a doubling weight, then $w \varphi^{\gamma p}$, $\gamma>0$,  is also a doubling weight (with a doubling constant depending on $\lceil \gamma\rceil $, $p$ and the doubling constant of $w$)
and (see also \cite[Lemma 4.5 and p. 65]{mt2000})
 \[
 (w \varphi^{\gamma p})_n \sim w_n \varphi_n^{\gamma p},
 \]
where   $\varphi_n(x) \sim \varphi(x) + 1/n$, and
the equivalence constants depend on $\lceil \gamma\rceil $, $p$  and the doubling constant of $w$.

Hence, denoting $S_n :=[-1+n^{-2}, 1-n^{-2}]$,  $\eta := \mu-m$, noting that $0\leq \eta <2 $ (and so $\lceil \eta \rceil$ is either $1$ or $2$ allowing us to replace constant that depend on $\lceil \eta \rceil$ by those independent of $\eta$), and using \lem{auxlemma} and \cor{cordw}  we have
\begin{eqnarray*}
\norm{\varphi ^\mu P_n}{p, w} &=& \norm{\varphi^{\eta}  Q_{n+m}}{p, w} = \norm{  Q_{n+m}}{p, w \varphi^{\eta p}} \sim \norm{  Q_{n+m}}{p, (w \varphi^{\eta p})_n}  \sim \norm{  Q_{n+m}}{\Lp(S_n), (w \varphi^{\eta p})_n}         \\
& \sim & \norm{  Q_{n+m}}{\Lp(S_n), w_n \varphi_n^{\eta p} }
\sim \norm{  Q_{n+m}}{\Lp(S_n), w_n \varphi^{\eta p} } .
\end{eqnarray*}
Now, since the weight $w_n \varphi^{\eta p}$ is doubling with the doubling constant depending only on the doubling constant of $w$  and $p$, we can continue as follows.
\begin{eqnarray*}
 \norm{  Q_{n+m}}{\Lp(S_n), w_n \varphi^{\eta p} } \sim  \norm{  Q_{n+m}}{p, w_n \varphi^{\eta p} }
=
  \norm{ \varphi^{\eta}  Q_{n+m}}{p, w_n } = \norm{   \varphi^\mu   P_{n}}{p, w_n } .
\end{eqnarray*}
Note that none of the constants in the equivalences above depend on $\mu$. This completes the proof of \ineq{mainauxineq1}.

\medskip

Now, let $\E_n := \left\{ x \st \sqrt{1-x^2} \leq 1/n \right\}$ and note that $\lambda_n(x) =1/n$ if $x\in \E_n$, and $\lambda_n(x) =\varphi(x)$ if $x\in [-1,1]\setminus \E_n$.
Using \ineq{mainauxineq1} we have
\begin{eqnarray*}
2^{\min\{0, 1-1/p\}} \norm{\lambda_n^\mu P_n}{p, w}  & \leq &
 \norm{\lambda_n^\mu P_n}{\Lp(\E_n), w} + \norm{\lambda_n^\mu P_n}{\Lp([-1,1]\setminus \E_n), w} \\
 & = &
 n^{- \mu}  \norm{ P_n}{\Lp(\E_n), w} +   \norm{\varphi^\mu P_n}{\Lp([-1,1]\setminus \E_n), w} \\
 & \leq &
 n^{- \mu}  \norm{ P_n}{p, w} +   \norm{\varphi^\mu P_n}{p, w} \\
  & \leq &
c_0   \left( n^{- \mu}   \norm{ P_n}{p, w_n} +   \norm{\varphi^\mu P_n}{p, w_n}\right) \\
 & \leq &
2c_0  \norm{\lambda_n^\mu P_n}{p, w_n} .
\end{eqnarray*}
In the other direction, the sequence of inequalities is exactly the same (switching $w$ and $w_n$).
This verifies \ineq{mainauxineq2}.

\end{proof}

If we allow constants to depend on $\mu$, then we have the following result.

\begin{corollary} \label{newjust1}
Let $w$ be a doubling weight, $0<p<\infty$,  $n\in\N$ and $\mu \geq 0$. Then, for any  $P_n \in \Poly_n$,
\[
\norm{\varphi_n^\mu P_n}{p, w} \sim    \norm{\varphi^\mu P_n}{p, w} \sim \norm{\varphi^\mu P_n}{p, w_n}   \sim \norm{\varphi_n^\mu P_n}{p, w_n} ,
\]
where the equivalence constants depend only on $p$, $\mu$   and the doubling constant of $w$.
\end{corollary}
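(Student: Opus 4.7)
The plan is to combine \thm{mainauxthm} with the Remez inequality \cor{cordw} in a standard fashion, with all $\mu$-dependence absorbed into the final constants. First observe that \thm{mainauxthm} already gives the middle equivalence $\norm{\varphi^\mu P_n}{p,w} \sim \norm{\varphi^\mu P_n}{p,w_n}$ with constants independent of $\mu$. Since $\lambda_n(x) \sim \varphi(x)+1/n \sim \varphi_n(x)$, the same theorem (applied with $\lambda_n$) also yields $\norm{\varphi_n^\mu P_n}{p,w} \sim \norm{\varphi_n^\mu P_n}{p,w_n}$. Hence the only remaining task is to establish the bridging equivalence $\norm{\varphi^\mu P_n}{p,w} \sim \norm{\varphi_n^\mu P_n}{p,w}$ (the analogous statement with $w_n$ in place of $w$ is identical, since $w_n$ is itself a doubling weight with doubling constant controlled by that of $w$).

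One direction is immediate: because $\varphi(x) \leq c\,\varphi_n(x)$ with an absolute constant, we have $\norm{\varphi^\mu P_n}{p,w} \leq c^\mu \norm{\varphi_n^\mu P_n}{p,w}$. For the reverse direction, use $\varphi_n(x) \leq c(\varphi(x) + 1/n)$ to obtain
\[
\varphi_n(x)^\mu \leq c(\mu)\bigl(\varphi(x)^\mu + n^{-\mu}\bigr),
\]
and therefore (using the quasi-triangle inequality for the $\Lp^w$ quasi-norm, whose constant depends only on $p$)
\[
\norm{\varphi_n^\mu P_n}{p,w} \leq c(\mu,p)\Bigl(\norm{\varphi^\mu P_n}{p,w} + n^{-\mu}\norm{P_n}{p,w}\Bigr).
\]

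It remains to absorb the second term. Let $S_n := [-1+n^{-2}, 1-n^{-2}]$ and $E := [-1,1]\setminus S_n$. Then $E$ is a union of at most two intervals and $\int_E (1-x^2)^{-1/2}\,dx \leq c/n$, so \cor{cordw} yields $\norm{P_n}{p,w} \leq c\,\norm{P_n}{\Lp(S_n),w}$ with $c$ depending only on $p$ and the doubling constant of $w$. On $S_n$ we have $\varphi(x) \geq c_0/n$, hence $n^{-\mu} \leq c_0^{-\mu}\,\varphi(x)^\mu$ on $S_n$, which gives
\[
n^{-\mu}\norm{P_n}{p,w} \leq c\, n^{-\mu}\norm{P_n}{\Lp(S_n),w} \leq c\, c_0^{-\mu} \norm{\varphi^\mu P_n}{\Lp(S_n),w} \leq c(\mu)\,\norm{\varphi^\mu P_n}{p,w}.
\]
Combining the two displays completes the bridging equivalence, and chaining with the equivalences from \thm{mainauxthm} produces the desired fourfold equivalence.

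There is no genuine obstacle here; the only point requiring a little care is the $\mu$-dependence of constants, which is why the Remez step is needed (to pay the factor $c_0^{-\mu}$), whereas in \thm{mainauxthm} itself the reduction $\mu \to \mu - 2\lfloor\mu/2\rfloor$ was engineered precisely to avoid any such dependence.
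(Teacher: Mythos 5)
Your proof is correct, and its first half is the same as the paper's: both arguments start from \thm{mainauxthm} (together with $\lambda_n\sim\varphi_n\sim\varphi+1/n$) to pass between $w$ and $w_n$. Where you genuinely diverge is in the closing step that ties the $\varphi^\mu$-norm to the $\varphi_n^\mu$-norm. The paper absorbs the factor into the weight and writes $\norm{\varphi^\mu P_n}{p,w}=\norm{P_n}{p,w\varphi^{\mu p}}\sim\norm{P_n}{p,(w\varphi^{\mu p})_n}\sim\norm{P_n}{p,w_n\varphi_n^{\mu p}}=\norm{\varphi_n^\mu P_n}{p,w_n}$, using \lem{auxlemma} for the doubling weight $w\varphi^{\mu p}$ and the averaging relation $(w\varphi^{\mu p})_n\sim w_n\varphi_n^{\mu p}$, all with constants depending on $\mu$. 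You instead prove the bridge $\norm{\varphi_n^\mu P_n}{p,w}\leq c(\mu,p)\norm{\varphi^\mu P_n}{p,w}$ directly, via the pointwise bound $\varphi_n^\mu\lesssim \varphi^\mu+n^{-\mu}$, the quasi-triangle inequality, and the Remez inequality \cor{cordw} applied to $w$ itself to restrict to $S_n=[-1+n^{-2},1-n^{-2}]$, where $\varphi\geq 1/n$ absorbs the $n^{-\mu}$ term. Your route is more self-contained and elementary (it needs nothing about averaged weights beyond the statement of \thm{mainauxthm}), while the paper's is a two-line chain that recycles exactly the machinery already set up inside the proof of \thm{mainauxthm}; since the corollary allows $\mu$-dependent constants, both are fully adequate. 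The two caveats one could raise --- that \thm{mainauxthm} is stated only for $0\leq\mu\leq n$ whereas the corollary admits all $\mu\geq 0$, and that \cor{cordw} as stated requires the excluded set to satisfy $\Lambda\leq n$, so very small $n$ must be treated by a direct doubling estimate --- are shared with the paper's own argument and are harmless here precisely because the constants may depend on $\mu$.
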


\begin{proof}
Since $\lambda_n(x) \sim \varphi_n(x)\sim \varphi(x) + 1/n$
  and $\varphi(x) \leq \varphi(x) + 1/n \sim \varphi_n(x)$, we immediately get from \thm{mainauxthm}
\[
\norm{\varphi^\mu P_n}{p, w} \sim \norm{\varphi^\mu P_n}{p, w_n} \leq c \norm{\varphi_n^\mu P_n}{p, w_n}\sim \norm{\varphi_n^\mu P_n}{p, w} .
\]
The following sequence finishes the proof:
\begin{eqnarray*}
\norm{\varphi^\mu P_n}{p, w} & = & \norm{ P_n}{p, w\varphi^{\mu p}} \sim \norm{ P_n}{p, (w\varphi^{\mu p})_n}  \sim \norm{ P_n}{p, w_n \varphi_n^{\mu p} } = \norm{\varphi_n^{\mu} P_n}{p, w_n  }.
\end{eqnarray*}
\end{proof}

%
%
%
%

 \subsection{Markov-Bernstein  type theorems}

In this subsection, we continue with the applications of the results presented in the first part of this section and discuss several Markov-Bernstein estimates for doubling weights.

We note that the following theorem can be obtained from  \cite[Theorem 4.1]{mt2000} and \cite[Theorem 3.1]{e} (Markov-Bernstein estimate for trigonometric polynomials)  with the same proof as that of   \cite[Theorem 7.3, (7.10) and (7.12)]{mt2000}. However, we  provide an alternative proof using the equivalence results from the previous section.


\begin{theorem} \label{thm5.5}
Let $w$ be a doubling weight, $0<p<\infty$ and  $r\in\N$.  Then, for all $n\in\N$ and $P_n\in\Poly_n$,
\[
  n^{-r} \norm{\varphi^r P_n^{(r)}}{p, w}    \sim  n^{-r} \norm{\varphi^r P_n^{(r)}}{p, w_n}  \sim \norm{\dn^r P_n^{(r)}}{p, w_n}   \sim   \norm{\dn^r P_n^{(r)}}{p, w} \leq c   \norm{ P_n }{p, w} \sim  \norm{ P_n }{p, w_n} ,
\]
where the constant $c$ and the equivalence constants depend only on $r$, $p$  and the doubling constant of $w$.
\end{theorem}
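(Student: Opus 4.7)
Proof plan:

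The statement is a chain of four equivalences, a single non-trivial inequality, and a final equivalence. My plan is to obtain the equivalences essentially for free from the results of the previous subsection, and to reduce the remaining inequality to the trigonometric Markov-Bernstein estimate for doubling weights.

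For the equivalences, I would apply \cor{newjust1} with exponent $\mu = r$ to the polynomial $P_n^{(r)} \in \Poly_{n-r} \subset \Poly_n$ to obtain
\[
\norm{\varphi^r P_n^{(r)}}{p,w} \sim \norm{\varphi^r P_n^{(r)}}{p,w_n} \sim \norm{\varphi_n^r P_n^{(r)}}{p,w_n} \sim \norm{\varphi_n^r P_n^{(r)}}{p,w}.
\]
Since $\dn(x) = n^{-1}\varphi(x) + n^{-2} \sim n^{-1}\varphi_n(x)$ uniformly in $x$, multiplying through by $n^{-r}$ and using $\varphi_n^r \sim n^r \dn^r$ pointwise converts the middle two quantities into $\norm{\dn^r P_n^{(r)}}{p,w_n}$ and $\norm{\dn^r P_n^{(r)}}{p,w}$. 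This gives the first four equivalences in the chain, and the final equivalence $\norm{P_n}{p,w} \sim \norm{P_n}{p,w_n}$ is exactly \lem{auxlemma}.

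It remains to prove the genuine Bernstein-type inequality $\norm{\dn^r P_n^{(r)}}{p,w} \leq c \norm{P_n}{p,w}$, which by the equivalences already established is equivalent to $n^{-r}\norm{\varphi^r P_n^{(r)}}{p,w_n} \leq c \norm{P_n}{p,w_n}$. For this I would pass to the trigonometric side by setting $T_{2n}(\theta) := P_n(\cos\theta)$ and $W(\theta) := w_n(\cos\theta)|\sin\theta|$. The first substantive step is to verify that $W$, extended $2\pi$-periodically and evenly, is a trigonometric doubling weight on $[0,2\pi]$ whose doubling constant depends only on that of $w$; this is a direct comparison of $W$-mass of arcs with $w_n$-mass of the corresponding intervals in $[-1,1]$, using $\sin\theta \sim \dn(\cos\theta)$ on arcs of length comparable to $1/n$ and the dyadic doubling of $w_n$. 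The $r$-th order trigonometric Markov-Bernstein inequality for doubling weights \cite[Theorem 3.1]{e} then gives $\norm{T_{2n}^{(r)}}{p,W} \leq c n^r \norm{T_{2n}}{p,W}$. A routine induction on $r$ expands $T_{2n}^{(r)}(\theta)$ as a combination of $P_n^{(j)}(\cos\theta)$, $1 \leq j \leq r$, with coefficients that are bounded trigonometric polynomials in $\sin\theta$ and $\cos\theta$; under the change of variables $x = \cos\theta$ (which sends $|\sin\theta|\,d\theta$ to $dx$ and $|\sin\theta|^j$ to $\varphi(x)^j$) this converts into the desired algebraic inequality, the lower-order derivative terms being absorbed by the inductive hypothesis on $r$. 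I expect the main obstacle to be the verification that $W$ is trigonometric doubling, which is elementary but requires care at $\theta = 0$ and $\theta = \pi$ where the factor $|\sin\theta|$ vanishes.
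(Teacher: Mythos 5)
Your handling of the equivalences coincides with the paper's: Corollary~\ref{newjust1} applied to $P_n^{(r)}$, the pointwise relation $\dn\sim\varphi_n/n$, and Lemma~\ref{auxlemma} give the whole chain, and the only substantive content is the Bernstein-type inequality $\norm{\dn^r P_n^{(r)}}{p,w_n}\leq c\norm{P_n}{p,w_n}$, which the paper simply quotes from \cite[Lemma 6.1]{k-weighted}. Your self-contained proof of that inequality, however, has a genuine gap at the induction step. Writing $T_{2n}(\theta)=P_n(\cos\theta)$, the chain rule gives
\[
T_{2n}^{(r)}(\theta)=(-\sin\theta)^r P_n^{(r)}(\cos\theta)+\sum_{j=1}^{r-1} b_{r,j}(\theta)\,P_n^{(j)}(\cos\theta),
\]
and the coefficients $b_{r,j}$, while bounded, do \emph{not} vanish to order $j$ at $\sin\theta=0$ (e.g.\ $b_{2,1}=-\cos\theta$; $b_{4,2}$ contains a $\cos^2\theta$ term). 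Solving for the top term and changing variables therefore leaves you with quantities $\norm{\varphi^{k}P_n^{(j)}}{p,w_n}$ with $k<j$, possibly $k=0$, and your inductive hypothesis $\norm{\varphi^{j}P_n^{(j)}}{p,w_n}\leq c\,n^{j}\norm{P_n}{p,w_n}$ says nothing about these near $\pm1$: there the correct growth is the Markov factor $n^{2j}$, which is exactly why the theorem is stated with $\dn$ rather than $\varphi/n$. So the absorption fails already at $r=2$; and if you instead treat the $b_{r,j}$ as merely bounded and invoke the sharpest available lower-order (Markov-type) bounds $\norm{P_n^{(j)}}{p,w_n}\leq cn^{2j}\norm{P_n}{p,w_n}$, you still lose for $j>r/2$, since $n^{2j}>n^{r}$ there.

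The argument can be repaired, but not ``routinely''. One repair is to strengthen the induction to the $\dn$-form $\norm{\dn^{j}P_n^{(j)}}{p,w_n}\leq c\norm{P_n}{p,w_n}$ and to track the structure of the Fa\`a di Bruno coefficients: every partition of $r$ into $j$ parts has at least $\max(2j-r,0)$ parts equal to $1$, so $b_{r,j}$ carries at least $\max(2j-r,0)$ factors of $\sin\theta$, and the pointwise bound $\varphi^{\max(2j-r,0)}\leq c\,n^{r}\dn^{\,j}$ then lets the lower-order terms be absorbed. A much simpler route — and essentially the proof behind \cite[Lemma 6.1]{k-weighted} and \cite[Theorem 7.3]{mt2000} — is to prove only the one-derivative inequality for an \emph{arbitrary} doubling weight via your trigonometric transfer (for $r=1$ the chain rule is exact, $T_{2n}'(\theta)=-\sin\theta\,P_n'(\cos\theta)$, so there are no lower-order terms at all) and then iterate $r$ times, at each step applying the one-derivative result with the modified doubling weight $w\varphi^{jp}$ and using $(w\varphi^{jp})_n\sim w_n\varphi_n^{jp}$, exactly as in the proof of \ineq{mainauxineq1}. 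Your preliminary step, checking that $w_n(\cos\theta)|\sin\theta|$ is a trigonometric doubling weight, is fine (it is the standard algebraic--trigonometric correspondence, together with the fact that $w_n$ is doubling with a constant controlled by that of $w$), but it is not where the difficulty lies.
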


%

\begin{proof} 
The statement of the lemma is an immediate consequence of \cor{newjust1} and the following estimate
(see \cite[Lemma 6.1]{k-weighted}, for example)
\[
   \norm{\dn^r P_n^{(r)} }{p, w_n} \leq c \norm{P_n}{p, w_n} ,
\]
where   the constant $c$ depends only on $r$, $p$ and the doubling constant of $w$.
\end{proof}

In the proof of inverse results for $0<p<1$ we need to know how the constants in Markov-Bernstein estimates depend on the order of derivatives.

We start with the following result that was proved in \cite{k-weighted} (see Corollaries 6.4 and 6.6 there).

\begin{lemma} \label{oldcors}
Let $w$ be a doubling weight and $0<p<1$. Then, for all
  $n,r\in\N$ and $l\in\N_0$ such that  $l\leq r \leq n-1$,   and $P_n \in \Poly_n$,
\[
\norm{\delta_n^{r } P_n^{(r)}}{p,w_n}  \leq     (c_*)^{r-l} {r! \over l!}   \norm{\delta_n^{l} P_n^{(l)}}{p,w_n}
\]
and
\[
\norm{\varphi^{r } P_n^{(r)}}{p,w_n}  \leq     (c_*)^{r-l} {r! \over l!} n^{r-l}  \norm{\varphi^{l} P_n^{(l)}}{p,w_n} ,
\]
where $\delta_n(x) := \max\left\{  \sqrt{1-x^2}/ n , 1/n^2 \right\}$, and
the constant $c_*$ depends only on $p$ and the doubling constant of $w$.
\end{lemma}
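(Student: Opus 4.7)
My strategy is to reduce both inequalities to a single-step Markov--Bernstein estimate of the form
\[
\norm{\delta_n^{k+1}P_n^{(k+1)}}{p,w_n}\leq c_*(k+1)\norm{\delta_n^k P_n^{(k)}}{p,w_n},\qquad 0\leq k\leq n-1,
\]
with $c_*$ depending only on $p$ and the doubling constant of $w$, and then iterate it. Applying this from $k=l$ up to $k=r-1$, the accumulated constant is $(c_*)^{r-l}(l+1)(l+2)\cdots r=(c_*)^{r-l}\,r!/l!$, which is exactly the first claim. The second inequality then follows by comparing $\delta_n$ to $\varphi$: since $\varphi\leq n\delta_n=\lambda_n$ pointwise, we get $\norm{\varphi^r P_n^{(r)}}{p,w_n}\leq n^r\norm{\delta_n^r P_n^{(r)}}{p,w_n}$; conversely, $\delta_n\leq c\varphi_n/n$, so combining with \cor{newjust1} applied to $P_n^{(l)}\in\Poly_n$ gives $\norm{\delta_n^l P_n^{(l)}}{p,w_n}\leq c\, n^{-l}\norm{\varphi^l P_n^{(l)}}{p,w_n}$, producing the additional factor $n^{r-l}$.

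\textbf{The single-step inequality.} I would start from the product-rule identity, valid almost everywhere (where $\delta_n$ is piecewise smooth with $|\delta_n'|$ bounded by an absolute constant),
\[
\delta_n^{k+1}P_n^{(k+1)}=\delta_n\bigl(\delta_n^k P_n^{(k)}\bigr)'-k\,\delta_n\,\delta_n^{k-1}\delta_n'\,P_n^{(k)}.
\]
The $p$-quasi-triangle inequality for $0<p<1$ then yields
\[
\norm{\delta_n^{k+1}P_n^{(k+1)}}{p,w_n}^p\leq \norm{\delta_n(\delta_n^k P_n^{(k)})'}{p,w_n}^p+c^p k^p\norm{\delta_n^k P_n^{(k)}}{p,w_n}^p,
\]
so the task reduces to bounding the first summand by $c^p\norm{\delta_n^k P_n^{(k)}}{p,w_n}^p$ with a constant \emph{independent of $k$}. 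For this, I would use the polynomial majorants $\Q_n\in\Poly_{cn}$ satisfying $w_n\sim\Q_n^p$ from \cite[(7.34)--(7.36)]{mt2000}, together with a smooth polynomial surrogate for $\delta_n^k$ (for instance a suitable power of a Chebyshev-type polynomial matching $\varphi_n\sim n\delta_n$ up to constants), to transfer the estimate to an unweighted first-order Bernstein inequality applied to a polynomial of degree $\lesssim n$. This supplies the desired bound with a constant depending only on $p$ and the doubling constant of $w$.

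\textbf{Main obstacle.} The delicate point is that $\delta_n^k P_n^{(k)}$ is itself not a polynomial, so the first-order Bernstein step must be carried out either by folding $\delta_n^{kp}$ into a modified weight $w_n\delta_n^{kp}$ (which must still be doubling with constants \emph{uniform in} $k$) or by replacing $\delta_n^k$ with a polynomial proxy. Either route requires genuine care: naively, the doubling constant of $w_n\delta_n^{kp}$ could grow with $k$, and a polynomial approximation of $\delta_n^k$ has degree proportional to $k$, which would otherwise introduce an unwanted $k$-dependence into the constant. The whole purpose of the product-rule decomposition above is to isolate the full $k$-dependence into the already-controlled second term $k\,\delta_n\delta_n' P_n^{(k)}$, so that the Bernstein step on the first term can be carried out with an absolute constant. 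Making this isolation rigorous, while keeping the polynomial-surrogate constants uniform in $k$, is the crux of the argument.
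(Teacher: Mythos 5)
There is a genuine gap, and it sits exactly where you locate "the crux." Note first that the paper does not prove \lem{oldcors} at all: it is quoted from \cite{k-weighted} (Corollaries 6.4 and 6.6 there), so your attempt has to stand on its own. Your iteration scheme and the bookkeeping $(c_*)^{r-l}(l+1)\cdots r=(c_*)^{r-l}r!/l!$ are the right shape, but the single-step inequality $\norm{\delta_n^{k+1}P_n^{(k+1)}}{p,w_n}\leq c_*(k+1)\norm{\delta_n^k P_n^{(k)}}{p,w_n}$ with $c_*$ independent of $k$ is never established, and the product-rule decomposition does not reduce it: since
\[
\delta_n\bigl(\delta_n^k P_n^{(k)}\bigr)'=\delta_n^{k+1}P_n^{(k+1)}+k\,\delta_n^{k}\delta_n'\,P_n^{(k)},
\]
asking for $\norm{\delta_n(\delta_n^kP_n^{(k)})'}{p,w_n}\leq c\norm{\delta_n^kP_n^{(k)}}{p,w_n}$ uniformly in $k$ is the same problem you started with (on the set where $\delta_n\equiv 1/n^2$, where $\delta_n'=0$, the two quantities coincide). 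The tools you propose for closing it do not work as described: (i) a polynomial comparable to $\delta_n^k\sim(\varphi_n/n)^k$ must reproduce the dyadic growth of $(\varphi+1/n)^k$ near $\pm1$ and therefore has degree of order $kn$, not ``$\lesssim n$'', so the unweighted Bernstein step would itself bring in $k$-dependent factors; (ii) even granting a surrogate, pointwise comparability cannot be differentiated, so you cannot replace $\delta_n^k$ by its proxy inside $(\delta_n^kP_n^{(k)})'$; (iii) folding $\delta_n^{kp}$ into the weight gives $w_n\delta_n^{kp}$ whose doubling constant grows with $k$, so every constant produced by the doubling-weight machinery (Remez, Bernstein, $w\sim Q_n^p$) becomes $k$-dependent — which is precisely what the lemma forbids. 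You acknowledge all of this but leave it unresolved, so what you have is a plan, not a proof; the uniform-in-$k$ single-step estimate is the entire content of the cited Corollary 6.4 of \cite{k-weighted}.

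The second inequality has a separate, smaller gap: deducing it from the first via $\varphi\leq n\delta_n$ and \cor{newjust1} applied with $\mu=l$ produces a constant that depends on $l$ (the equivalence constants in \cor{newjust1} depend on $\mu$, and a pointwise bound $\delta_n\leq c\varphi_n/n$ with $c>1$ additionally contributes $c^l$). This yields exactly the weaker statement the paper records in the remark immediately after \lem{oldcors} ("where $c$ is allowed to depend on $l$"), not the stated inequality in which $c_*$ depends only on $p$ and the doubling constant of $w$. So even if the first inequality were granted, your route does not give the second one as claimed.
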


We remark that if we are not interested in the exact dependance of the constants on $l$ (the order of the lower derivative in the estimates), then
the first estimate in \lem{oldcors} and \cor{newjust1} imply the following (weaker) analog of the second estimate in \lem{oldcors} which actually would have been sufficient for our purposes:
\[
\norm{\varphi^{r } P_n^{(r)}}{p,w_n} \leq n^r \norm{\delta_n^{r } P_n^{(r)}}{p,w_n} \leq
c (c_*)^{r} r! n^r \norm{\delta_n^{l} P_n^{(l)}}{p,w_n}  \leq c (c_*)^{r} r! n^{r-l} \norm{\varphi^{l} P_n^{(l)}}{p,w_n} ,
\]
where $c$ is allowed to depend on $l$ in addition to $p$ and the doubling constant of $w$.


Taking into account \lem{mainauxthm} and observing that $\delta_n(x) = \lambda_n(x)/n$  we immediately get the following corollary (in order not to overcomplicate the notation we incorporate the extra constant  into $c_*$, \ie we emphasize once again that constants $c_*$ in different statements are different).

\begin{corollary} \label{cor2.9}
Let $w$ be a doubling weight and $0<p<1$. Then, for all
  $n,r\in\N$ and $l\in\N_0$ such that  $l\leq r \leq n-1$,   and $P_n \in \Poly_n$,
\[
\norm{\delta_n^{r } P_n^{(r)}}{p,w }  \leq     (c_*)^{r-l} {r! \over l!}   \norm{\delta_n^{l} P_n^{(l)}}{p,w }
\]
and
\[
\norm{\varphi^{r } P_n^{(r)}}{p,w}  \leq     (c_*)^{r-l} {r! \over l!} n^{r-l}  \norm{\varphi^{l} P_n^{(l)}}{p,w} ,
\]
where
the constant  $c_*$ depends  only on $p$ and the doubling constant of $w$.
\end{corollary}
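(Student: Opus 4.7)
The plan is to reduce each of the two inequalities to its $w_n$-counterpart in \lem{oldcors} by means of the weight-replacement equivalences of \thm{mainauxthm}. The crucial feature that makes this work is the $\mu$-uniformity of the constants in \thm{mainauxthm}: the equivalences $\norm{\varphi^\mu P_n}{p,w}\sim\norm{\varphi^\mu P_n}{p,w_n}$ and $\norm{\lambda_n^\mu P_n}{p,w}\sim\norm{\lambda_n^\mu P_n}{p,w_n}$ hold with constants that depend only on $p$ and the doubling constant of $w$, and not on $\mu$. Without this $\mu$-uniformity the constant we eventually extract for the weight $w$ could a priori depend on $r-l$ through the number of times the equivalence is invoked, which would ruin the geometric form $(c_*)^{r-l}$ required in the conclusion.

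For the first inequality, I would start from the identity $\delta_n=\lambda_n/n$, so that $\norm{\delta_n^{k} P_n^{(k)}}{p,w}=n^{-k}\norm{\lambda_n^{k} P_n^{(k)}}{p,w}$ for any $k\ge 0$, and apply \ineq{mainauxineq2} (with $\mu=k$) to the polynomial $P_n^{(k)}\in\Poly_n$ to obtain $\norm{\delta_n^{k}P_n^{(k)}}{p,w}\sim\norm{\delta_n^{k}P_n^{(k)}}{p,w_n}$ with equivalence constants $C, C^{-1}$ independent of $k$. Sandwiching the first inequality of \lem{oldcors} (applied in the $w_n$ norm) between the two instances $k=r$ and $k=l$ of this equivalence yields the desired estimate up to a fixed multiplicative factor $C^{2}$, which I would absorb into the base of the exponential by redefining $c_*:=\max\{1,C^{2}c_*^{\mathrm{old}}\}$; this works for $r>l$ because then $(c_*)^{r-l}\ge C^{2}(c_*^{\mathrm{old}})^{r-l}$, while the case $r=l$ is the trivial identity.

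The second inequality is proved in exactly the same fashion, using \ineq{mainauxineq1} in place of \ineq{mainauxineq2} and the second inequality of \lem{oldcors} as the intermediate step in the $w_n$ norm, followed by the same constant-absorption. The main and essentially only obstacle is the $\mu$-uniformity of the constants in \thm{mainauxthm} flagged at the outset; once that is in hand, both estimates fall out by the short sandwich argument described, with no further calculation required.
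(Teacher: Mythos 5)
Your proposal is correct and follows essentially the same route as the paper: observe $\delta_n=\lambda_n/n$, transfer both sides to the $w_n$-norm via the $\mu$-uniform equivalences of \thm{mainauxthm} applied to $P_n^{(r)},P_n^{(l)}\in\Poly_n$, apply \lem{oldcors}, and absorb the fixed equivalence factor into $c_*$ (which is exactly what the paper means by incorporating the extra constant into $c_*$). Your explicit remark that the $\mu$-independence of the constants is what preserves the geometric form $(c_*)^{r-l}$ is precisely the point the paper relies on.
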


Now, taking into account that   $\delta_n(x) \leq \dnx \leq 2 \delta_n(x)$, this immediately implies the following.

\begin{corollary} \label{improv}
Let $w$ be a doubling weight and $0<p<1$. Then, for all
  $n,r\in\N$ and $l\in\N_0$ such that  $l\leq r \leq n-1$,   and $P_n \in \Poly_n$,
\[
\norm{\dn^{r } P_n^{(r)}}{p,w }  \leq    2^l  (c_*)^{r-l} {r! \over l!}   \norm{\dn^{l} P_n^{(l)}}{p,w }  ,
\]
where
the constant  $c_*$ depends  only on $p$ and the doubling constant of $w$.
\end{corollary}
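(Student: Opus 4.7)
The plan is to derive Corollary~\ref{improv} as a direct pointwise consequence of \cor{cor2.9}, using only the two-sided comparison $\delta_n(x)\leq \dnx \leq 2\delta_n(x)$. No new ideas are needed; the main obstacle is merely keeping track of where each factor of $2$ lands so that the final constant has the precise form $2^l (c_*)^{r-l}$ claimed in the statement rather than a cruder $2^r (c_*)^{r-l}$.

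First, I would apply $\dnx \leq 2\delta_n(x)$ raised to the power $r$ (integrated against $w$ in the $\L_p$ quasinorm) to obtain the upper bound
\[
\norm{\dn^{r} P_n^{(r)}}{p,w} \leq 2^{r}\, \norm{\delta_n^{r} P_n^{(r)}}{p,w}.
\]
Next, by the first inequality of \cor{cor2.9}, the right-hand side is bounded by $(c_*^{\mathrm{old}})^{r-l}(r!/l!)\norm{\delta_n^{l} P_n^{(l)}}{p,w}$. Finally, since $\delta_n(x)\leq \dnx$, raising to the power $l$ yields $\norm{\delta_n^{l} P_n^{(l)}}{p,w}\leq \norm{\dn^{l} P_n^{(l)}}{p,w}$. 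Chaining these three estimates gives
\[
\norm{\dn^{r} P_n^{(r)}}{p,w} \leq 2^{r}\,(c_*^{\mathrm{old}})^{r-l}\,\frac{r!}{l!}\,\norm{\dn^{l} P_n^{(l)}}{p,w}.
\]

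To match the exact form stated in the corollary, I would rewrite $2^{r} = 2^{l}\cdot 2^{r-l}$ and absorb the factor $2^{r-l}$ into the base of the geometric constant, setting $c_* := 2\,c_*^{\mathrm{old}}$. The new $c_*$ still depends only on $p$ and the doubling constant of $w$, as required. This is precisely the kind of harmless redefinition that the author has already warned the reader about in the parenthetical preceding \cor{cor2.9}. The resulting bound
\[
\norm{\dn^{r} P_n^{(r)}}{p,w} \leq 2^{l}(c_*)^{r-l}\frac{r!}{l!}\,\norm{\dn^{l} P_n^{(l)}}{p,w}
\]
is exactly the claim, and nothing more is needed.
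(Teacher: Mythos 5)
Your proposal is correct and is essentially the paper's own argument: the corollary is deduced from Corollary~\ref{cor2.9} purely via the pointwise comparison $\delta_n(x)\leq\dnx\leq 2\delta_n(x)$, which the $\Lp$ quasinorm respects since the bound is pointwise. Your bookkeeping of the factor $2^r=2^l\cdot 2^{r-l}$ and the absorption of $2^{r-l}$ into a redefined $c_*$ is exactly the harmless renaming the paper licenses when it notes that the constants $c_*$ in different statements are different.
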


\sect{Two crucial auxiliary lemmas} \label{cruciallemmas}

In the case $1\leq p <\infty$, we have the following lemma.

\begin{lemma} \label{lem8.5j}
Let $w$ be a doubling weight, $1\leq p < \infty$ and $A >0$.  Then   for any
$n, r\in\N$, $I := \Z_{A, 1/n}^j$, and any polynomials $Q_n\in\Poly_n$ and $q_r \in \Poly_r$ satisfying  $Q_n^{(\nu)}(z_j)=q_r^{(\nu)}(z_j)$, $0\leq \nu\leq r-1$, the following inequality holds
\[
\norm{Q_n-q_r}{\Lp(I), w}  \leq c n^{-r}     \norm{\varphi^r  Q_n^{(r)} }{p, w} ,
\]
where the constant $c$ depends only on $r$, $p$, $A$ and the doubling constant of $w$.
\end{lemma}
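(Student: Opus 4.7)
The plan is as follows. Since $q_r$ has degree $<r$, the hypothesis $Q_n^{(\nu)}(z_j)=q_r^{(\nu)}(z_j)$ for $0\le\nu\le r-1$ uniquely determines $q_r$ as the $(r-1)$-st Taylor polynomial of $Q_n$ at $z_j$. Taylor's formula with Lagrange remainder, together with $|x-z_j|\le A\rho$ on $I$ (where $\rho:=\rho(1/n,z_j)$), yields the pointwise bound $|Q_n(x)-q_r(x)|\le c\rho^r\norm{Q_n^{(r)}}{L_\infty(I)}$ for $x\in I$, and hence
\[
\norm{Q_n-q_r}{\Lp(I),w} \le c\,\rho^r\,\norm{Q_n^{(r)}}{L_\infty(I)}\,w(I)^{1/p}.
\]
The problem thus reduces to establishing the Nikolskii-type estimate $\rho^r\norm{Q_n^{(r)}}{L_\infty(I)}w(I)^{1/p}\le c\,n^{-r}\norm{\varphi^r Q_n^{(r)}}{p,w}$.

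To prove this I would combine four ingredients: (a) on $I$, $\varphi_n(x)\sim\varphi_n(z_j)=\varphi(z_j)+1/n=n\rho$, so in particular $\rho/\varphi_n(z_j)\sim 1/n$; (b) since $|I|\sim\dn(x)$ on $I$, the doubling property of $w$ produces $w(I)\sim|I|\,w_n(x_0)$ and $w_n(x)\sim w_n(x_0)$ uniformly for $x,x_0\in I$; (c) the local Nikolskii-reverse inequality $\norm{P}{L_\infty(I)}^p\le c|I|^{-1}\int_I|P|^p\,dx$ for $P\in\Poly_n$ and Chebyshev-sized $I$, with constant $c$ independent of $n$; (d) \cor{newjust1}, which yields $\norm{\varphi_n^r Q_n^{(r)}}{p,w_n}\sim\norm{\varphi^r Q_n^{(r)}}{p,w}$.

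Combining (c) with (b) gives $\norm{Q_n^{(r)}}{L_\infty(I)}^p w(I) \le c\int_I|Q_n^{(r)}|^p w_n\,dx$. Using (a) to replace $\varphi_n(x)^{rp}$ by $\varphi_n(z_j)^{rp}$ on $I$, and then extending the integration to all of $[-1,1]$, turns the right side into $c\,\varphi_n(z_j)^{-rp}\norm{\varphi_n^r Q_n^{(r)}}{p,w_n}^p$. Applying (d), multiplying by $\rho^{rp}$, and using $\rho^{rp}\varphi_n(z_j)^{-rp}\sim n^{-rp}$ from (a) then completes the proof after taking $p$-th roots.

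The main obstacle is ingredient (c): the local Nikolskii-reverse with constant independent of $n$. When $z_j=\pm1$, $|I|\sim 1/n^2$ and $\sqrt{1-x^2}$ vanishes at an endpoint of $I$, so a naive Bernstein argument breaks down. The standard workaround is the substitution $x=\cos\theta$, which maps $I$ to a $\theta$-interval of length $\sim 1/n$; the trigonometric polynomial $P(\cos\theta)$ of degree $n$ has bounded effective trigonometric degree on this $\theta$-interval (degree times length $\sim 1$), and trigonometric Nikolskii on such intervals yields an $n$-independent constant that transfers back to the algebraic inequality via $dx=\sin\theta\,d\theta$.
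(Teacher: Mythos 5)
Your argument is correct in outline, but it takes a genuinely different route from the paper. The paper also starts from Taylor's formula with integral remainder, but then it never passes through an $\L_\infty$ norm of $Q_n^{(r)}$: since $1\leq p<\infty$, it applies H\"older's inequality directly to the remainder integral, which keeps $Q_n^{(r)}$ in a local $\Lp$ norm, so that after the doubling facts ($w(I)\leq c\,\dn(z_j)w_n(z_j)$, $w_n(x)\sim w_n(z_j)$ and $\dn(x)\sim\dn(z_j)$ on $I$) one lands on $\norm{\dn^r Q_n^{(r)}}{p,w_n}$ and finishes with \cor{newjust1}, exactly as in your endgame. The advantage of the H\"older route is that no polynomial-specific inequality beyond \cor{newjust1} is needed, and the argument applies verbatim to any $f$ with $f^{(r-1)}\in\AC(I)$ (which is what the paper's remark after \lem{lem8.5j} exploits); it is also precisely the step that breaks down for $0<p<1$ and forces the separate, harder \lem{crucialpless1}. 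Your route instead reduces matters to a local Nikolskii inequality $\norm{P}{\C(I)}^p\leq c|I|^{-1}\int_I|P|^p$ for $P\in\Poly_n$ on a Chebyshev-sized interval, with $c$ independent of $n$. That inequality is true and standard in this literature, so your proof does go through; the one place where your sketch is a bit glib is the endpoint case $z_j=\pm1$: after the substitution $x=\cos\theta$ the Jacobian $\sin\theta$ degenerates on the image of $I$, so the naive transfer of the trigonometric Nikolskii inequality back to the algebraic one does not follow from a pointwise comparison of $d\theta$ and $n\sin\theta\,d\theta$; one needs a Remez-type refinement (bounding $|T(\theta^*)|^p$ by $cn\int_E|T|^p$ over a subset $E$ of the $\theta$-window of measure $\gtrsim 1/n$ on which $\sin\theta\sim 1/n$), which is available in the cited doubling-weight literature but should be invoked explicitly. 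With that fixed, your proof is complete, at the cost of importing an extra standard ingredient that the paper's H\"older argument avoids.
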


\begin{remark}
Using the same proof it is possible to show that, for any $f$ such that $f^{(r-1)}\in\AC(I)$,
\[
E_r(f)_{\Lp(I), w} \leq c  \norm{\dn^r  f^{(r)} }{\Lp(I), w_n} .
\]
At the same time, $w_n$ on the right-hand side of this estimate cannot be replaced with $w$ since, otherwise, together with \lem{lem7.2} and \thm{jacksonthm} we would get the estimate
$E_n(f)_{p,w} \leq c  \norm{\dn^r  f^{(r)} }{p, w}$ which is not valid for all doubling weights (see \cite[Example 3.5]{mt1998}). In fact, even the estimate
$E_r(f)_{\Lp(I), w} \leq c  \norm{\dn^r  f^{(r)} }{p, w}$ is invalid in general.
\end{remark}

\begin{proof}[Proof of \lem{lem8.5j}]
The proof is rather straightforward and relies on Taylor's theorem
(see \eg \cite[Proposition 4.1]{dbmr}). However, since it is short and works for all doubling weights,  we sketch it below for completeness.
Denote  $z:=z_j$, and note that $(Q_n - q_r)^{(\nu)}(z)=0$, $0\leq \nu\leq r-1$, and that we can assume that $n\geq r+1$.
Using Taylor's theorem with the integral remainder 
we have
\[
Q_n(x) - q_r(x)  = {1 \over (r-1)!} \int_z^x  (x-u)^{r-1}  Q_n^{(r)}(u) du .
\]
Hence,  using H\"{o}lder's inequality (with $1/p+1/p'=1$) we have
\begin{eqnarray*}
  \norm{Q_n-q_r}{\Lp(I), w}^p    & \leq &
  \int_I w(x)  \left[\int_{[z,x]} |x-u|^{r-1}  |Q_n^{(r)}(u)|   du  \right]^p  dx    \\
& \leq &
\int_I   w(x)  \left(\int_{[z,x]} |x-u|^{(r-1)p'} du \right)^{p/p'}
 \int_{[z,x]} |Q_n^{(r)}(u)|^p   du    dx    \\
& \leq &
\int_I   w(x) |x-z|^{rp-1}     \int_{[z,x]} |Q_n^{(r)}(u)|^p   du    dx    \\
& \leq & (A \dn(z))^{rp-1} \norm{Q_n^{(r)}}{\Lp(I)}^p \int_I w(x) dx .
\end{eqnarray*}
Now, using the fact that $w(I) \leq c w\left([z-\dn(z), z+\dn(z)]\right)$ with $c$ depending only on $A$ and the doubling constant of $w$, and the fact that
$\dnx \sim \dn(z)$ and $w_n(x) \sim w_n(z)$, for $x\in I$, we have
\begin{eqnarray*}
  \norm{Q_n-q_r}{\Lp(I), w}^p \leq c  w_n(z)  \norm{\dn^r Q_n^{(r)}}{\Lp(I)}^p  \leq c \norm{\dn^r Q_n^{(r)}}{\Lp(I), w_n}^p
  \leq c \norm{\dn^r Q_n^{(r)}}{p, w_n}^p
  \leq c n^{-r} \norm{\varphi^r Q_n^{(r)}}{p, w}^p ,
\end{eqnarray*}
 where the last estimate follows from \cor{newjust1} .
\end{proof}

If $0<p<1$, we no longer can use H\"{o}lder's inequality in a straightforward way, and so it takes much more effort  to get an analog of \lem{lem8.5j}. If there is a simple  proof of the following lemma, we were unable to find it.

\begin{lemma} \label{crucialpless1}
Let $w$ be a doubling weight and $0<p<1$. Then there exists  a positive constant $\theta\leq 1$ depending only on $p$ and the doubling constant of $w$ such that, for
$n, r\in\N$, $I := \Z_{\theta, 1/n}^j$, and any polynomials $Q_n\in\Poly_n$ and $q_r \in \Poly_r$ satisfying  $Q_n^{(\nu)}(z_j)=q_r^{(\nu)}(z_j)$, $0\leq \nu\leq r-1$, the following inequality holds
\[
\norm{Q_n-q_r}{\Lp(I), w}  \leq c   n^{-r} \norm{ \varphi^r Q_n^{(r)} }{p, w} ,
\]
where the constant $c$ depends only on $r$, $p$ and the doubling constant of $w$.
\end{lemma}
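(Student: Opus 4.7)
The plan is to combine Taylor's formula at $z_j$ with a pointwise weighted Nikolskii inequality for doubling weights. Set $P:=Q_n-q_r$. The vanishing hypothesis together with $\deg q_r\le r-1$ gives $P^{(\nu)}(z_j)=0$ for $0\le\nu\le r-1$ and $P^{(r)}=Q_n^{(r)}$, so Taylor's formula with Lagrange remainder around $z_j$ yields $|P(x)|\le (r!)^{-1}|x-z_j|^r\|P^{(r)}\|_{L_\infty(I)}$ for every $x\in I$. Raising to the $p$-th power and integrating against $w$, using $|x-z_j|\le\theta\rho_n(z_j)$ on $I$ and the direct bound $w(I)\le \rho_n(z_j)w_n(z_j)$ (immediate from the definition of $w_n$ once $\theta\le 1$), I arrive at
\[
\|P\|_{L_p(I),w}^p\le c\,\theta^{rp}\,\rho_n(z_j)^{rp+1}\,w_n(z_j)\,\|P^{(r)}\|_{L_\infty(I)}^p.
\]

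The second step controls $\|P^{(r)}\|_{L_\infty(I)}$ by the global $L_p$ norm of $\varphi^rP^{(r)}$ through the pointwise weighted Nikolskii inequality: for any doubling weight $W$ and $Q\in\Poly_m$,
\[
|Q(x)|^p\,W_m(x)\,\rho_m(x)\le c\,\|Q\|_{p,W}^p, \qquad x\in[-1,1].
\]
I apply this with $Q:=P^{(r)}\in\Poly_{n-r}$ and $W:=w\varphi^{rp}$. The weight $W$ is doubling with constant depending on $r$, $p$ and the doubling constant of $w$ (as noted in the proof of \thm{mainauxthm}), and the identity $(w\varphi^{rp})_n\sim w_n\varphi_n^{rp}$ is already cited in the excerpt. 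Combined with the comparabilities $w_n(x)\sim w_n(z_j)$, $\varphi_n(x)\sim\varphi_n(z_j)$, and $\rho_n(x)\sim\rho_n(z_j)$ on $I$, this gives
\[
\|P^{(r)}\|_{L_\infty(I)}^p\le \frac{c\,\|\varphi^rP^{(r)}\|_{p,w}^p}{w_n(z_j)\,\varphi_n(z_j)^{rp}\,\rho_n(z_j)}.
\]

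Substituting into the first display, the factor $w_n(z_j)$ and one copy of $\rho_n(z_j)$ cancel, leaving
\[
\|P\|_{L_p(I),w}^p\le c\,\theta^{rp}\,\bigl(\rho_n(z_j)/\varphi_n(z_j)\bigr)^{rp}\|\varphi^rQ_n^{(r)}\|_{p,w}^p.
\]
Since $\rho_n(z_j)/\varphi_n(z_j)\sim 1/n$ uniformly (both when $z_j$ is interior to $[-1,1]$ and when $z_j=\pm 1$), this yields the required bound with constants depending only on $r$, $p$ and the doubling constant of $w$. The argument requires no smallness of $\theta$, so I may fix any convenient $\theta\le 1$ (e.g., $\theta=1$).

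The main obstacle is that the pointwise weighted Nikolskii inequality used in step two is not stated explicitly in the excerpt. It is a standard Mastroianni--Totik doubling-weight result, often derived via Christoffel functions; if one wishes to avoid an external citation, it can be derived internally from \cor{cordw} (Remez) and \lem{auxlemma} (so that the global weight $w$ can be replaced by $w_n$) combined with an unweighted Nikolskii estimate on a sub-interval of length $\sim\rho_n(x)$ around $x$, on which any polynomial of degree $\le n$ has uniformly bounded oscillation by Remez.
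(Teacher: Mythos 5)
Your proof is correct, and it takes a genuinely different route from the paper's. The paper follows the Ditzian--Lubinsky $L_p$-interpolation scheme for $0<p<1$: after writing the Taylor remainder it splits $|g^{(r)}|^{p}= |g^{(r)}|^{p(1-p)}\cdot|g^{(r)}|^{p^2}$ and applies H\"older with exponents $1/(1-p)$ and $1/p$, leading to a factor $T_1$ that is controlled by expanding $g^{(r)}(u)$ in a Taylor series about $x$ (thus bringing in \emph{all} derivatives $g^{(\nu)}$ for $r\le\nu\le n-1$) and invoking the Markov--Bernstein estimate (Corollary~\ref{improv}) with explicit tracking of the dependence of the constant on $\nu$. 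The smallness of $\theta$ is essential there: it makes the resulting series $\sum (\theta c_*)^{\nu p}\binom{\nu}{r}^p$ converge. Your argument replaces all of this by a single use of the pointwise weighted Nikolskii (Christoffel-function) estimate applied to the polynomial $P^{(r)}\in\Poly_{n-r}$ and the doubling weight $w\varphi^{rp}$, then cancels the $w_n(z_j)\rho_n(z_j)$ factors and uses $\rho_n(z_j)/\varphi_n(z_j)\sim 1/n$. Your computation is right at every step ($|x-z_j|\le\theta\rho_n(z_j)$, $w(I)\le\rho_n(z_j)w_n(z_j)$ for $\theta\le 1$, and the comparabilities of $W_n$, $\varphi_n$, $\rho_n$ across $I$ and across degrees $n-r\sim n$), and a pleasant by-product is that no smallness of $\theta$ is required, so $\theta=1$ works; this is in fact slightly stronger than what the paper proves and is compatible with how the lemma is used downstream (in Theorem~\ref{conversethmpless1} and in Section~\ref{real} one only needs $\ccc\le\theta/(2A)$, and $\theta=1$ is harmless there). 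The one genuine gap you flag yourself: the pointwise bound $|Q(x)|^p\,W_m(x)\,\rho_m(x)\le c\,\norm{Q}{p,W}^p$ for doubling $W$ and $0<p<1$ is not stated in the paper. It is a standard Mastroianni--Totik/Erd\'elyi fact, and as you suggest it follows from the tools already on hand: by Lemma~\ref{auxlemma} one may replace $W$ by $W_m$; on the Chebyshev-scale interval $[x-\rho_m(x),x+\rho_m(x)]\cap[-1,1]$ one has $W_m\sim W_m(x)$; and the unweighted Christoffel bound $|Q(x)|^p\le c\,\rho_m(x)^{-1}\int_{|u-x|\le\rho_m(x)}|Q|^p$ (valid uniformly for $0<p<\infty$, e.g. via the analogous trigonometric fact) finishes the job. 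So the approach is sound, but a careful write-up should either cite the weighted Nikolskii inequality precisely or include the two-line derivation just sketched. The trade-off between the two proofs is that the paper stays entirely within the stated Markov--Bernstein toolkit at the cost of a delicate series argument and a small $\theta$, whereas your proof is structurally shorter and avoids all higher derivatives of $Q_n$, but leans on the Christoffel-function estimate.
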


%

\begin{proof}
We use the approach from \cite[Section 6]{dlub}.
Denote $g := Q_n - q_r$ and $z:=z_j$, and note that $g^{(\nu)}(z)=0$, $0\leq \nu\leq r-1$, and $g^{(r)} = Q_n^{(r)}$.
Using Taylor's theorem with the integral remainder 
we have
\[
g(x) = {1 \over (r-1)!} \int_z^x  (x-u)^{r-1}  g^{(r)}(u) du .
\]
Hence,  

\begin{eqnarray*}
\lefteqn{ \norm{Q_n-q_r}{\Lp(I), w}^p   =  \norm{g}{\Lp(I), w}^p = \int_I |g(x)|^p w(x) dx
  \leq
  \int_I \left|\int_z^x (x-u)^{r-1}  g^{(r)}(u)  w(x)^{1/p} du  \right|^p  dx }  \\
& \leq &
  \int_I \left|\int_z^x  \left|(x-u)^{r-1}  g^{(r)}(u) \right|^{1-p}  w(x)^{-1+1/p}
  \times \left|(x-u)^{r-1}  g^{(r)}(u)  \right|^{p}  w(x)      du   \right|^p   dx \\
&\leq &
 \int_I \norm{ \left|(x-u)^{r-1}  g^{(r)}(u)  \right|^{1-p}    w(x)^{-1+1/p}   }{\L_\infty[z,x]}^p
   \times \left|\int_z^x  \left|(x-u)^{r-1}  g^{(r)}(u) \right|^{p}   w(x)     du   \right|^p   dx .
\end{eqnarray*}
Now, using H\"{o}lder's inequality with $\sigma_1 = 1/(1-p)$ and $\sigma_2 = 1/p$ (note that $1/\sigma_1 + 1/\sigma_2 = 1$) we have
\begin{eqnarray*}
\norm{g}{\Lp(I), w}^p & \leq &
 \left[\int_I \norm{ \left|(x-u)^{r-1}  g^{(r)}(u)  \right|^{1-p}   w(x)^{-1+1/p} }{\L_\infty[z,x]}^{p/(1-p)}  dx \right]^{1/\sigma_1} \\
 && \times     \left[   \int_I  \left| \int_z^x  \left|(x-u)^{r-1}  g^{(r)}(u)  \right|^{p}  w(x)     du  \right| dx \right]^{1/\sigma_2} \\
& \leq &
  \left[\int_I \norm{ \left|(x-u)^{r-1}  g^{(r)}(u)  \right|^{p}  w(x)    }{\L_\infty[z,x]}   dx \right]^{1-p} \\
 && \times     \left[   \int_I  \left| \int_z^x  \left|(x-u)^{r-1}  g^{(r)}(u)  \right|^{p}  w(x)      du \right|  dx \right]^{p} =:   T_1^p \times T_2^p .
\end{eqnarray*}

To estimate $T_2$ we recall that $[z,x] := [x,z]$ if $x<z$ and write
\begin{eqnarray*}
T_2 & = &
\int_I  w(x)   \int_{[z,x]}   |x-u|^{(r-1)p}  |g^{(r)}(u)|^p        du   dx \\
& \leq &
\int_I  \left|  g^{(r)}(u)\right|^p du \int_I w(x) |x-z|^{(r-1)p}    dx \\
& \leq &
(\theta \dn(z))^{(r-1)p} \norm{g^{(r)}}{\Lp(I)}^p  \int_I w(x) dx \\
& \leq &
(\theta \dn(z))^{(r-1)p} \dn(z) w_n(z)  \norm{g^{(r)}}{\Lp(I)}^p .
\end{eqnarray*}
Since  $w_n(x)\sim w_n(z)$ and $\dn(x) \sim \dn(z)$, $x\in I$, this implies
\[
T_2 \leq c \dn(z)^{1-p} \norm{\dn^r g^{(r)}}{\Lp(I), w_n}^p .
\]

Now, we need to estimate
\begin{eqnarray*}
T_1^{p/(1-p)} &=&   \int_I  \norm{ \left|(x-u)^{r-1}  g^{(r)}(u) \right|^{p}  w(x)   }{\L_\infty[z,x]}   dx   .
\end{eqnarray*}

For $u$ between $z$ and $x$ we have
\begin{eqnarray*}
|x-u|^{(r-1)p}  |g^{(r)}(u)|^p &= & |x-u|^{(r-1)p}  \left| \sum_{\nu=r}^{n-1} {g^{(\nu)}(x) \over (\nu-r)!} (u-x)^{\nu-r} \right|^p\\
& \leq&
 \sum_{\nu=r}^{n-1} |g^{(\nu)}(x)|^p \left[ { |x-z|^{\nu-1} \over (\nu-r)!}  \right]^p ,
\end{eqnarray*}
and so
\begin{eqnarray*}
T_1^{p/(1-p)} &\leq &
\int_I  w(x)   \sum_{\nu=r}^{n-1} |g^{(\nu)}(x)|^p \left[ { |x-z|^{\nu-1} \over (\nu-r)!}  \right]^p    dx \\
& \leq &
c\int_I    w(x) \sum_{\nu=r}^{n-1} | \dn(x)^\nu g^{(\nu)}(x)|^p \left[ { |x-z|^{\nu-1} \over \dn(z)^\nu (\nu-r)!}  \right]^p        dx \\
&\leq &
c\sum_{\nu=r}^{n-1} \left[ { (\theta \dn(z))^{\nu-1} \over \dn(z)^\nu (\nu-r)!}  \right]^p   \norm{\dn^\nu g^{(\nu)} }{\Lp(I), w}^p \\
&\leq &
c\sum_{\nu=r}^{n-1}   \left[ { \theta^{\nu-1} \over \dn(z)  (\nu-r)!}  \right]^p   \norm{\dn^\nu g^{(\nu)} }{p, w}^p  .
\end{eqnarray*}
We now use \cor{improv} to conclude
\begin{eqnarray*}
T_1^{p/(1-p)} &\leq &
c  \sum_{\nu=r}^{n-1}   \left[ { \theta^{\nu-1} \over \dn(z)  (\nu-r)!}       2^r (c_*)^{\nu-r} { \nu! \over r!}     \right]^p   \norm{\dn^r g^{(r)} }{p, w}^p \\
  & \leq &
 c \dn(z)^{-p} \norm{\dn^r g^{(r)} }{p, w}^p   \sum_{\nu=r}^{\infty}   (\theta c_*)^{\nu p}     \left[{\nu \choose r} \right]^p      \\
& \leq &
 c \dn(z)^{-p} \norm{\dn^r g^{(r)} }{p, w}^p ,
\end{eqnarray*}
provided $\theta c_*  \leq 1/2$.
 Therefore,
\[
T_1  \leq
c \dn(z)^{p-1}   \norm{\dn^r g^{(r)} }{p, w }^{1-p} .
\]
Combining estimates of $T_1$ and $T_2$, we have
\[
\norm{g}{\Lp(I)} \leq  c   \norm{\dn^r g^{(r)}}{\Lp(I), w_n}^p \norm{\dn^r g^{(r)} }{p, w }^{1-p} \leq c
   \norm{\dn^r g^{(r)}}{p, w_n}^p       \norm{\dn^r g^{(r)} }{p , w}^{1-p}  \leq c   n^{-r} \norm{\varphi^r g^{(r)} }{p , w}   ,
\]
noting that the last estimate  immediately follows from \cor{newjust1}.
\end{proof}

\sect{Preliminary results for inverse theorems} \label{prelinv}

\begin{lemma} \label{lem7.1}
If $w$ is a doubling weight from the class $\W(\Z)$, $ 0< p< \infty$, $f\in\Lp^w$,
  $r\in\N$, and $A,   t >0$,  then
\[
 \w_\varphi^r(f, A,   t)_{p, w}    \leq     c \norm{f}{p, w }  ,
\]
where $c$ depends only on  $r$, $p$, $A$  and the weight $w$.
\end{lemma}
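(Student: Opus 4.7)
The plan is to separately bound the two pieces of $\w_\varphi^r(f,A,t)_{p,w} = \Omega_\varphi^r(f,A,t)_{p,w} + \sum_{j=1}^M E_r(f)_{\Lp(\Z_{2A,t}^j),w}$. The error-type sum is trivial: since $\Z_{2A,t}^j \subset [-1,1]$, choosing $q \equiv 0$ gives $E_r(f)_{\Lp(\Z_{2A,t}^j),w} \leq \norm{f}{\Lp(\Z_{2A,t}^j),w} \leq \norm{f}{p,w}$, and there are only $M$ terms. So the real work is bounding $\Omega_\varphi^r$ by a constant times $\norm{f}{p,w}$.

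For the main-part modulus, I would fix $h \in (0,t]$, set $y_i(x) := x + (i - r/2) h \varphi(x)$, and let $D_h := \{x \in [-1,1] : [x - rh\varphi(x)/2, x + rh\varphi(x)/2] \subset \I_{A,h}\}$. Expanding
\[
\Delta_{h\varphi(x)}^r(f, x, \I_{A,h}) = \sum_{i=0}^r (-1)^{r-i} \binom{r}{i} f(y_i(x)) \quad \text{on } D_h
\]
and applying the standard inequality $|\sum a_i|^p \leq c_{p,r} \sum |a_i|^p$ reduces everything to showing that, for each $0 \leq i \leq r$,
\[
\int_{D_h} w(x) |f(y_i(x))|^p\,dx \leq c \norm{f}{p,w}^p
\]
with $c$ independent of $h \in (0,t]$.

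This follows from a change of variable $y = y_i(x)$, which requires (a) a Jacobian bound and (b) a weight comparison $w(x) \sim w(y_i(x))$ on $D_h$. For (a), a direct computation gives $y_i'(x) = 1 - (i - r/2) h x/\varphi(x)$. I would first observe that if $rh > 2$ then $D_h = \emptyset$: the containment $[x \pm rh\varphi(x)/2] \subset [-1,1]$ gives (for $x \geq 0$) $1-x \geq (rh/2)\varphi(x)$, and combined with $\varphi(x)^2 = (1-x)(1+x)$ this forces $1-x \geq 2(rh)^2/(4 + (rh)^2)$, which exceeds $1$ when $rh > 2$, impossible. For $rh \leq 2$ the same computation yields $rh|x| \leq \varphi(x)$, hence $|y_i'(x) - 1| \leq (r/2) h|x|/\varphi(x) \leq 1/2$ and $y_i'(x) \in [1/2, 3/2]$ on $D_h$. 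For (b), since $|x - y_i(x)| \leq (r/2) h\varphi(x) \leq (r/2) \rho(h,x)$ and $[x, y_i(x)] \subset \I_{A,h}$, setting $n := \lceil 1/h \rceil$ gives $\dn(x) \sim \rho(h,x)$ and $\I_{A,h} \subset \I_{A',1/n}$ for some $A'$ depending only on $A$, so \lemp{iv} applies and yields $w(x) \sim w(y_i(x))$ uniformly on $D_h$, with constants depending only on $r$, $A$, and the doubling data of $w$.

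Combining (a) and (b),
\[
\int_{D_h} w(x) |f(y_i(x))|^p\,dx \leq c \int_{D_h} w(y_i(x)) |f(y_i(x))|^p\,dx \leq 2c \int_{-1}^1 w(y) |f(y)|^p\,dy = 2c \norm{f}{p,w}^p,
\]
and summing over $i$ and taking $\sup_{0<h\leq t}$ produces $\Omega_\varphi^r(f,A,t)_{p,w} \leq c \norm{f}{p,w}$, which together with the $E_r$ bound completes the proof. The main obstacle is the Jacobian estimate: one has to recognize that the support condition $[x \pm rh\varphi(x)/2] \subset [-1,1]$ automatically enforces $rh|x| \leq \varphi(x)$, which in turn makes $y_i$ a bi-Lipschitz map of controlled distortion on the set where $\Delta_{h\varphi(x)}^r$ is nonzero.
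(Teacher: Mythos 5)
Your proof is correct and follows essentially the same route as the paper: bound the $E_r$ terms trivially with $q\equiv 0$, then for $\Omega_\varphi^r$ expand the symmetric difference, compare $w(x)\sim w(y_i(x))$ via \lemp{iv}, and change variables using $1/2\leq y_i'(x)\leq 3/2$ on the support set. The only difference is that you verify the Jacobian bound and the $h$-to-$1/n$ reduction explicitly, which the paper merely asserts.
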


\begin{proof}
First of all, it is clear that
\[
\sum_{j=1}^M  E_r(f)_{\Lp(\Z_{2A,t}^j), w}          
\leq \sum_{j=1}^M  \norm{f}{\Lp(\Z_{2A,t}^j), w} \leq M \norm{f}{p, w}.
\]
Now, recall that
$\Delta_{h\varphi(x)}^r(f,x, \I_{A,h}) = 0$ if $x\not\in S_{A,h} \subset \Dom_{rh/2}$, where
\[
S_{A,h} := \left\{ x \st [x-rh\varphi(x)/2,  x+rh\varphi(x)/2 ] \subset \I_{A, h}\right\}
\]
and
\[
\Dom_{rh/2} := \left\{ x \st x\neq \pm 1 \andd x \pm rh \varphi(x)/2 \in [-1,1] \right\}  = \left\{ x \st |x| \leq (4-r^2h^2)/(4+r^2h^2) \right\},
\]
and so
 \begin{eqnarray*}
 \Omega_\varphi^r(f, A, t)_{p, w} & = & \sup_{0<h\leq t} \norm{\Delta_{h\varphi}^r(f)}{\Lp(S_{A, h}),w} .
\end{eqnarray*}

Let $h\in (0, t]$ be fixed, $x \in S_{A,h}$
and denote $y_i(x) := x+(i-r/2)h \varphi(x)$, $0\leq i\leq r$.
Then, $[x, y_i(x)] \subset \I_{A, h}$ and $|x-y_i(x)| \leq (r/2) \rho(h, x)$, and so \lemp{iv} implies that $w(x)\sim w(y_i(x))$.

Now, taking into account that  $1/2 \leq y_i'(x) \leq 3/2$, $x\in \Dom_{rh/2}$, we have
\begin{eqnarray*}
\norm{\Delta_{h\varphi}^r(f)}{\Lp(S_{A,h}),w}^p & \leq &    \int_{S_{A,h}} w  (x)
\left( \sum_{i=0}^r {r \choose i} |f(x+(i-r/2)h \varphi(x)) | \right)^p \, dx\\
& \leq &
c    \sum_{i=0}^r \int_{S_{A,h}} w (y_i(x))
\left|f(y_i(x))   \right|^p \, dx \\
& \leq &
c    \int_{-1}^1 w (y) \left|f(y)  \right|^p \, dy  \leq c \norm{f}{p, w}^p  .
\end{eqnarray*}
\end{proof}

\begin{lemma} \label{lem7.2}
Let $w$ be a doubling weight from the class $\W(\Z)$, $1\leq p <\infty$
  $n,r\in\N$ and  $A, t >0$. If $f$ is such that $f^{(r-1)}\in \AC_\loc \left( (-1,1)\setminus \Z  \right)$
  and $\norm{\varphi^r f^{(r)}}{p, w} < \infty$, then
\[
\Omega_\varphi^r(f, A, t)_{p, w} \leq c t^r \norm{  \varphi^r f^{(r)}}{p, w} ,
\]
where $c$ depends only on  $r$, $A$,   $p$ and the weight $w$.
\end{lemma}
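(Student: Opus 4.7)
The plan is to fix $h\in(0,t]$ and bound $\int_{\I_{A,h}} w(x)|\Delta_{h\varphi(x)}^r(f,x,\I_{A,h})|^p dx$ by $c\,h^{rp}\norm{\varphi^r f^{(r)}}{p,w}^{p}$; the supremum over $h\in(0,t]$ then yields the claim. The symmetric difference is supported on $S_{A,h}:=\{x:[x-rh\varphi(x)/2,x+rh\varphi(x)/2]\subset\I_{A,h}\}$, on which the enclosing segment is bounded away from $\Z$, and the hypothesis $f^{(r-1)}\in\AC_\loc((-1,1)\setminus\Z)$ supplies the Peano integral representation
\[
\Delta_{h\varphi(x)}^r(f,x)=\int_0^{h\varphi(x)}\!\!\cdots\!\int_0^{h\varphi(x)} f^{(r)}\!\left(x-\tfrac{rh\varphi(x)}{2}+u_1+\cdots+u_r\right)du_1\cdots du_r.
\]

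Setting $a=h\varphi(x)$, H\"older's inequality on the cube $[0,a]^r$ (using $p\ge 1$) followed by marginalization in $u_2,\dots,u_r$ against the $B$-spline kernel (whose sup-norm is bounded by $c a^{r-1}$) produces the pointwise estimate
\[
|\Delta_{h\varphi(x)}^r(f,x)|^p \le c\,(h\varphi(x))^{rp-1}\int_{x-rh\varphi(x)/2}^{x+rh\varphi(x)/2}|f^{(r)}(y)|^p\,dy,\qquad x\in S_{A,h}.
\]
Multiplying by $w(x)$, integrating over $S_{A,h}$, and swapping the order of integration by Fubini reduces matters to estimating, for each $y\in[-1,1]$, the ``slice'' integral
\[
\int_{E_y} w(x)\varphi(x)^{rp-1}\,dx,\qquad E_y:=\{x\in S_{A,h}:|x-y|\le rh\varphi(x)/2\}.
\]
For $x\in E_y$ the segment $[x,y]$ lies in $\I_{A,h}$ with $|x-y|\le(r/2)\rho(h,x)$, so \lemp{iv} yields $w(x)\sim w(y)$ and a standard Ditzian--Totik-type estimate yields $\varphi(x)\sim\varphi(y)$. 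Consequently $|E_y|\le c h\varphi(y)$ and the slice integral is at most $c h\,w(y)\varphi(y)^{rp}$, which substituted back gives the desired $c h^{rp}\norm{\varphi^r f^{(r)}}{p,w}^{p}$.

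The main technical delicacy is the comparability $\varphi(x)\sim\varphi(y)$ on $E_y$: from $x\in\Dom_{rh/2}$ one obtains $\varphi(x)\ge rh/2$, but the identity $\varphi(y)^2-\varphi(x)^2=-2x(y-x)-(y-x)^2$ is tight only when $\varphi(x)\gg rh$. I would resolve this using the coarser comparability $\rho(h,x)\sim\rho(h,y)$ (which follows directly from $|x-y|\le(r/2)\rho(h,x)$ and the definition of $\rho$), equivalently $\varphi(x)+h\sim\varphi(y)+h$, and handle the marginal regime $\varphi(y)\lesssim h$ via a trivial bound using $|E_y|=O(h^2)$ there; an alternative, cleaner route is the classical change of variable $x=\cos\theta$, which turns the substitution $y=x+h\varphi(x)V$ into an essentially isometric translation in $\theta$ and renders the comparability automatic. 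Once this step is in hand the rest of the argument is a routine Fubini/H\"older computation.
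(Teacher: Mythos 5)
Your Peano representation and Fubini swap are a reasonable skeleton, but the slice estimate the argument rests on, namely
\[
\int_{E_y} w(x)\bigl(h\varphi(x)\bigr)^{rp-1}\,dx \;\le\; c\, h^{rp}\, w(y)\,\varphi(y)^{rp},
\]
is false when $rp>2$ and the relevant endpoint of $[-1,1]$ is not in $\Z$, and neither of your two proposed patches repairs it. When $\varphi(y)\le ch$ (a set of $y$'s of measure $\sim h^2$ near such an endpoint), every $x\in E_y$ lies near the end of $\Dom_{rh/2}$, so $\varphi(x)\sim h$; moreover the domain constraint $x+rh\varphi(x)/2\le 1$ forces $|E_y|\sim \varphi(y)^2$, not $h\varphi(y)$. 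The slice integral is then $\sim w(y)\varphi(y)^2 h^{2rp-2}$, and its ratio to the target $h^{rp}w(y)\varphi(y)^{rp}$ is $\sim (h/\varphi(y))^{rp-2}$, which blows up as $\varphi(y)\to0$ whenever $rp>2$. Your ``trivial bound $|E_y|=O(h^2)$'' only gives $\le c\,w(y)h^{2rp}$, which \emph{dominates} the target rather than being dominated by it; and the $x=\cos\theta$ substitution does not make the comparability automatic either, since $\sin(\theta\pm rh/2)\not\sim\sin\theta$ when $\theta\le ch$. A concrete failure: take $w\equiv1$, $\Z=\{0\}$, $r=p=2$, and $f^{(2)}=c\,\chi_{[1-3h^4/2,\,1-h^4/2]}$ with $c\sim h^{-6}$ normalized so $\norm{\varphi^2 f^{(2)}}{2}=1$. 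Your scheme yields $\int w|\Delta_{h\varphi}^2 f|^2\,dx\le ch^2$, but the lemma requires $\le ch^4$; the true value is in fact $\sim h^8$, because the bump sits almost entirely where the symmetric difference is annihilated by the domain restriction, a cancellation your one-shot Jensen bound destroys by replacing the B-spline kernel by its sup-norm.

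The repair is to H\"older \emph{after} inserting the weight rather than before. Writing $\Delta_{h\varphi(x)}^r f(x)=\int N_r(u-x)f^{(r)}(u)\,du$ with $N_r$ the B-spline supported on $[x-rh\varphi(x)/2,\,x+rh\varphi(x)/2]$, estimate $|\Delta_{h\varphi(x)}^rf(x)|\le\norm{N_r(\cdot-x)\varphi^{-r}}{\L_{p'}}\,\norm{\varphi^r f^{(r)}}{\Lp}$. Since $N_r$ vanishes to order $r-1$ at its endpoints while $\varphi^{-r}\sim(1\mp u)^{-r/2}$ there, the product remains integrable and one checks that $\norm{N_r(\cdot-x)\varphi^{-r}}{\L_{p'}}^p\sim h^{rp-1}\varphi(x)^{-1}$ uniformly in $x\in S_{A,h}$, with no boundary degeneration; this replaces the harmful factor $\varphi(x)^{rp-1}$ in your slice integral by the harmless $\varphi(x)^{-1}$, after which the Fubini argument closes in all regimes. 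This is in substance what the paper does, one inner integral at a time, culminating in the estimate \ineq{mess1}, the computation for which is taken from \cite{kls-ca}.
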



\begin{proof}
Recall that
 \begin{eqnarray*}
 \Omega_\varphi^r(f, A, t)_{p, w} & = & \sup_{0<h\leq t} \norm{\Delta_{h\varphi}^r(f)}{\Lp(S_{A, h}),w} ,
\end{eqnarray*}
where
\[
S_{A,h} := \left\{ x \st[x-rh\varphi(x)/2,  x+rh\varphi(x)/2 ] \subset \I_{A, h}\right\} .
\]
Since $\I_{A, h}$ has at most $M+1$ components, it is sufficient (and necessary) to verify the lemma for each of them.
We have two different types of components: when a component is ``in the middle'' of $[-1,1]$, \ie
\[
J_{A, h}^j  := [z_j+A \rho(h, z_j), z_{j+1}-A \rho(h, z_{j+1})], \quad \mbox{\rm where } \; 1\leq j \leq M-1 ,
\]
and when a component is near the endpoints of $[-1,1]$. Note that there is a component of this type only   when $z_1 \neq -1$ and $z_{M}\neq 1$.
More precisely, define
\[
J_{A, h}^0  := [-1, z_{1}-A \rho(h, z_{1})] \quad \mbox{\rm if }\; z_1 \neq -1
\]
and
\[
J_{A, h}^M := [z_M+A \rho(h, z_j), 1] \quad \mbox{\rm if }\;  z_M \neq  1 .
\]

%

Recall that
$\Delta_{h\varphi(x)}^r(f,x, \I_{A,h}) = 0$ if $x\in J_{A, h}^j$ and  $[x-rh\varphi(x)/2,  x+rh\varphi(x)/2 ] \not\subset J_{A, h}^j$, and so we also denote
\[
S_{A, h}^j := \left\{ x \st [x-rh\varphi(x)/2,  x+rh\varphi(x)/2 ] \subset J_{A, h}^j \right\}, \quad 0\leq j \leq M .
\]

Suppose now that $1\leq p <\infty$ and let $h\in (0, t]$ be fixed.
Since $f$ has the $(r-1)$st locally absolutely continuous derivative inside each $S_{A, h}^j$, we have for any $x\in S_{A, h}^j$
\[
  \Delta_{h\varphi(x)}^r(f, x) = \int_{-h\varphi(x)/2}^{h\varphi(x)/2} \dots \int_{-h\varphi(x)/2}^{h\varphi(x)/2} f^{(r)}(x + t_1 + \dots + t_r) dt_r \dots dt_1 ,
\]
and, by \lemp{iv}, $w(x) \sim w(u)$, for $u\in [x-rh\varphi(x)/2,  x+rh\varphi(x)/2 ]$.

Therefore,
\begin{eqnarray*}
\lefteqn{ \left( \int_{S_{A, h}^j}  w(x) |  \Delta_{h\varphi(x)}^r(f, x)|^p dx\right)^{1/p} }\\
& \leq &
\left(  \int_{S_{A, h}^j}  \left[ \int_{-h\varphi(x)/2}^{h\varphi(x)/2} \dots \int_{-h\varphi(x)/2}^{h\varphi(x)/2} w(x)^{1/p}   |f^{(r)}(x + t_1 + \dots + t_r   )| dt_r \dots dt_1\right]^p dx  \right)^{1/p} \\
& \leq &
c\left(  \int_{S_{A, h}^j}  \left[ \int_{-h\varphi(x)/2}^{h\varphi(x)/2} \dots \int_{-h\varphi(x)/2}^{h\varphi(x)/2} w(x + t_1 + \dots + t_r)^{1/p}   |f^{(r)}(x + t_1 + \dots + t_r   )| dt_r \dots dt_1\right]^p dx  \right)^{1/p} .
\end{eqnarray*}
By H\"{o}lder's inequality, for each $u$ satisfying $[x+u - h\varphi(x)/2, x+u + h\varphi(x)/2] \subset S_{A, h}^j$, we have
\begin{eqnarray*}
\int_{-h\varphi(x)/2}^{h\varphi(x)/2} w(x + u + t_r)^{1/p}   |f^{(r)}(x + u  + t_r   )| dt_r & = &
\int_{x+u-h\varphi(x)/2}^{x+u+h\varphi(x)/2} w(v)^{1/p}   |f^{(r)}(v)| dv \\
& \leq &
\norm{ w^{1/p} \varphi^r f^{(r)}}{\Lp(\A(x,u))}  \norm{\varphi^{-r}}{\L_{p'} (\A(x,u))} ,
\end{eqnarray*}
where $1/p+1/p'=1$ and
\[
\A(x,u) := \left[ x+u-h\varphi(x)/2, x+u+h\varphi(x)/2 \right] .
\]
The needed estimate now follows from
\begin{eqnarray} \label{mess1}
\lefteqn{ \int_{S_{A, h}^j}  \left[ \int_{-h\varphi(x)/2}^{h\varphi(x)/2} \dots \int_{-h\varphi(x)/2}^{h\varphi(x)/2}  \norm{\varphi^{-r}}{\L_{p'} (\A(x,t_1+\dots+t_{r-1}))} \right. } \\ \nonumber
&& \left. \times
\norm{ w^{1/p} \varphi^r f^{(r)}}{\Lp(\A(x,t_1+\dots+t_{r-1}))}     dt_{r-1}  \dots dt_1\right]^p dx  \leq c h^{rp}\norm{ w^{1/p} \varphi^r f^{(r)}}{p}^p , \quad 1\leq p <\infty .
\end{eqnarray}
Note that, in the case $r=1$,   \ineq{mess1} is understood as
\begin{eqnarray} \label{mess3}
 \int_{S_{A, h}^j}      \norm{\varphi^{-1}}{\L_{p'} (\A(x,0))}^p
\norm{ w^{1/p} \varphi  f'}{\Lp(\A(x,0))}^p   dx  \leq c h^{p}\norm{ w^{1/p} \varphi f'}{p}^p , \quad 1\leq p < \infty .
\end{eqnarray}

Estimates \ineq{mess1} and \ineq{mess3}  are proved in exactly the same way as \cite[(4.2)-(4.4)]{kls-ca}.
\end{proof}


The following lemma can be proved using exactly the same sequence of estimates that were used to prove \cite[Lemma 6.9]{k-weighted} with the only difference that the second estimate of \cor{cor2.9} should be used instead of
\cite[Corollary 6.6]{k-weighted}.

\begin{lemma}
Let $w$ be a doubling weight, $0<p<1$  and $n,r\in\N$.
Then,   there exists a positive constant $\ccc$ depending only on  $r$,  $p$ and  the doubling constant of $w$,
 such that, for any $P_n \in \Poly_n$ and $0<h \leq \ccc/n$,
 \[  
\left( 1/2 \right)^{1/p}  h^{r} \norm{\varphi^r P_n^{(r)}}{p, w}    \leq   \norm{\Delta_{h\varphi}^r (P_n)}{p, w}\leq (3/2)^{1/p} h^{r} \norm{\varphi^r P_n^{(r)}}{p, w} .
\]
 \end{lemma}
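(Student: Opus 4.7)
The plan is to adapt the proof of Lemma~6.9 in \cite{k-weighted}, replacing Corollary~6.6 there with the second estimate of \cor{cor2.9}, which plays the role of the Markov--Bernstein-type inequality allowing one to control $\|\varphi^k P_n^{(k)}\|_{p,w}$ for $k>r$ by $\|\varphi^r P_n^{(r)}\|_{p,w}$. The starting point is Taylor's theorem: since $P_n$ is a polynomial, direct expansion about $x$ gives, on $\Dom_{rh/2}$,
\[
\Delta_{h\varphi(x)}^r(P_n,x) \;=\; (h\varphi(x))^r P_n^{(r)}(x) \;+\; R(x),
\qquad
R(x) \;:=\; \sum_{k=r+1}^n a_{k,r}\,\frac{(h\varphi(x))^k}{k!}\,P_n^{(k)}(x),
\]
where $a_{k,r} := \sum_{i=0}^{r}\binom{r}{i}(-1)^{r-i}(i-r/2)^k$ vanishes for $k<r$, equals $r!$ for $k=r$, and satisfies the crude bound $|a_{k,r}|\leq 2^r(r/2)^k$. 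Thus the symmetric $r$-th difference equals its expected leading Taylor term plus a tail $R$ involving only derivatives of order $>r$.

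Next I would estimate $\|R\|_{p,w}$. For $0<p<1$ the $p$-subadditivity $\|\sum g_k\|_{p,w}^p \leq \sum \|g_k\|_{p,w}^p$ gives
\[
\|R\|_{p,w}^p
\;\leq\; \sum_{k=r+1}^n \frac{|a_{k,r}|^p\, h^{kp}}{(k!)^p}\,\|\varphi^k P_n^{(k)}\|_{p,w}^p,
\]
and the second inequality of \cor{cor2.9}, namely $\|\varphi^k P_n^{(k)}\|_{p,w}\leq (c_*)^{k-r}(k!/r!)\,n^{k-r}\|\varphi^r P_n^{(r)}\|_{p,w}$, makes the factorials cancel and reduces the problem to a geometric series in $\lambda := (r/2)\,c_*\,h n$. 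By choosing $\ccc$ (depending only on $r$, $p$, and the doubling constant of $w$ through $c_*$) so small that $\lambda<1/2$ and the resulting geometric tail lies below any prescribed threshold, we obtain $\|R\|_{p,w}^p \leq \tfrac{1}{2}\,h^{rp}\,\|\varphi^r P_n^{(r)}\|_{p,w}^p$.

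The two bounds in the lemma now follow from the $p$-subadditivity inequalities $|a+b|^p \leq |a|^p + |b|^p$ and $|a+b|^p \geq |a|^p - |b|^p$ applied to the identity above. Integrating the first over $[-1,1]$ (recalling that $\Delta_{h\varphi(x)}^r(P_n,x)$ vanishes outside $\Dom_{rh/2}$) immediately gives the upper bound $\|\Delta_{h\varphi}^r P_n\|_{p,w}^p \leq h^{rp}\|\varphi^r P_n^{(r)}\|_{p,w}^p + \|R\|_{p,w}^p \leq \tfrac{3}{2}\,h^{rp}\|\varphi^r P_n^{(r)}\|_{p,w}^p$. The lower bound is the main obstacle: the identity holds only on $\Dom_{rh/2}$, while the target norm is taken over all of $[-1,1]$. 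The missing set $[-1,1]\setminus\Dom_{rh/2}$ consists of two intervals adjacent to $\pm 1$ of Chebyshev measure $\int(1-x^2)^{-1/2}dx = O(rh) = O(\ccc/n)$, so applying the Remez inequality of \cor{cordw} to the polynomial $(1-x^2)^r(P_n^{(r)})^2$ of degree $\leq 2n$ with exponent $p/2$ shows that the endpoint contribution $\int_{[-1,1]\setminus\Dom_{rh/2}} w\,\varphi^{rp}\,|P_n^{(r)}|^p$ can be made an arbitrarily small fraction of $\|\varphi^r P_n^{(r)}\|_{p,w}^p$ by further shrinking $\ccc$. Combining this with $\|R\|_{p,w}^p\leq \tfrac{1}{2}h^{rp}\|\varphi^r P_n^{(r)}\|_{p,w}^p$ yields $\|\Delta_{h\varphi}^r P_n\|_{p,w}^p\geq \tfrac{1}{2}h^{rp}\|\varphi^r P_n^{(r)}\|_{p,w}^p$. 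Both smallness requirements on $\ccc$ are of the same nature and depend only on $r$, $p$, and the doubling constant of $w$, so they can be imposed simultaneously.
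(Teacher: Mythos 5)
Your Taylor expansion of the symmetric difference, the $p$-subadditivity decomposition, and the use of the second estimate of \cor{cor2.9} to telescope the factorials in the remainder $R$ are exactly the right ingredients, and your treatment of the upper bound is correct. The gap is in the lower bound, specifically in the claim that \cor{cordw} makes the endpoint contribution $\int_{[-1,1]\setminus\Dom_{rh/2}} w\,\varphi^{rp}|P_n^{(r)}|^p$ an \emph{arbitrarily small} fraction of $\|\varphi^r P_n^{(r)}\|_{p,w}^p$ as $\ccc\to 0$. The Remez inequality as stated in \cor{cordw} (and in the underlying Erd\'{e}lyi theorem, which requires $\Lambda\geq 1$) only delivers a bound $\int_{-1}^1 \leq C\int_{[-1,1]\setminus E}$ with a \emph{fixed} constant $C = C(\Lambda,K,p,\text{doubling})\geq 1$; equivalently $\int_E \leq (1-1/C)\int_{-1}^1$, and $1-1/C$ is a fixed number that need not approach $0$. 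Indeed, as the Chebyshev measure of the removed set falls below $\sim 1/n$, the Erd\'{e}lyi constant $(C/c)^{\Lambda}$ does not improve beyond a fixed threshold, since the minimal-interval-length parameter $c$ shrinks together with $E$. So the desired smallness cannot be read off from \cor{cordw}; to make your step rigorous you would need a sharper "local" Remez-type statement with constant $1+O(\Lambda)$ as $\Lambda\to 0$, which the paper does not provide.

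Fortunately the endpoint contribution can be controlled without any Remez inequality at all, and this is almost certainly what the referenced proof does. On $[-1,1]\setminus\Dom_{rh/2}$ one has $1-x^2\leq r^2h^2$, so $\varphi(x)\leq rh$, while $\dn(x)\geq n^{-2}$ everywhere; hence
\[
\int_{[-1,1]\setminus\Dom_{rh/2}} w\,\varphi^{rp}|P_n^{(r)}|^p
\;\leq\; (rh)^{rp}\, n^{2rp}\int_{-1}^{1} w\, \dn^{rp}|P_n^{(r)}|^p
\;\leq\; c\,(rhn)^{rp}\,\|\varphi^{r}P_n^{(r)}\|_{p,w}^p ,
\]
where the last step is the equivalence $\|\dn^r P_n^{(r)}\|_{p,w}\sim n^{-r}\|\varphi^r P_n^{(r)}\|_{p,w}$ from \thm{thm5.5}. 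For $h\leq\ccc/n$ this gives $c(r\ccc)^{rp}\|\varphi^r P_n^{(r)}\|_{p,w}^p$, which is as small as desired for $\ccc$ small. Combining this with your bound $\|R\|_{p,w}^p\leq \varepsilon\, h^{rp}\|\varphi^r P_n^{(r)}\|_{p,w}^p$ (with $\varepsilon$ controlled by $\ccc$) recovers the stated lower bound with constant $1/2$. So the missing idea is to exploit the smallness of $\varphi$ on the excluded boundary layers together with \thm{thm5.5}, rather than a Remez-type removal argument.
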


Taking into account that
 \begin{eqnarray*}
 \Omega_\varphi^r(P_n, A, t)_{p, w} &= & \sup_{0<h\leq t} \norm{\Delta_{h\varphi}^r(P_n)}{\Lp(\I_{A, h}),w}  \leq \sup_{0<h\leq t} \norm{\Delta_{h\varphi}^r(P_n)}{p,w}
\end{eqnarray*}
 we immediately get the following corollary.

\begin{corollary} \label{cor7.4}
Let $w$ be a doubling weight, $0<p<1$ and $n,r\in\N$.
Then,   there exists a positive constant $\ccc \leq 1$ depending only on  $r$,  $p$ and  the doubling constant of $w$,
 such that, for any $P_n \in \Poly_n$, $A>0$ and $0<t \leq \ccc/n$,
\[
\Omega_\varphi^r(P_n, A, t)_{p, w} \leq   n^{-r} \norm{\varphi^r P_n^{(r)}}{p, w} .
\]
\end{corollary}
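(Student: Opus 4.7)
The statement is essentially a direct consequence of the preceding lemma combined with the observation immediately preceding the corollary, so the plan is a short chain of inequalities rather than a substantive new argument.

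First, I would invoke the inequality already displayed right before the corollary, namely
\[
\Omega_\varphi^r(P_n, A, t)_{p, w} \leq \sup_{0<h\leq t} \norm{\Delta_{h\varphi}^r(P_n)}{p,w},
\]
valid because the integrand on the left is supported in a subset of $[-1,1]$ and the weighted $\Lp$ (quasi)norm over a subset is bounded by the norm over $[-1,1]$. This reduces the problem to estimating the un-restricted symmetric difference.

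Next, let $\ccc_0 \leq 1$ be the constant supplied by the previous lemma (depending only on $r$, $p$ and the doubling constant of $w$). For every $h$ with $0 < h \leq \ccc_0/n$ the lemma gives the upper bound
\[
\norm{\Delta_{h\varphi}^r(P_n)}{p,w} \leq (3/2)^{1/p}\, h^r \norm{\varphi^r P_n^{(r)}}{p,w}.
\]
Combining this with the previous step and taking the supremum for $0<h\leq t$ yields
\[
\Omega_\varphi^r(P_n, A, t)_{p, w} \leq (3/2)^{1/p}\, t^r \norm{\varphi^r P_n^{(r)}}{p,w},
\]
provided $t \leq \ccc_0/n$.

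Finally, I would absorb the constant $(3/2)^{1/p}$ into the requirement on $t$. Specifically, set
\[
\ccc := \min\bigl\{\ccc_0,\; (2/3)^{1/(rp)}\bigr\},
\]
which depends only on $r$, $p$ and the doubling constant of $w$. For $0 < t \leq \ccc/n$ one has $(3/2)^{1/p} t^r \leq \ccc^r n^{-r} \leq n^{-r}$, yielding
\[
\Omega_\varphi^r(P_n, A, t)_{p, w} \leq n^{-r} \norm{\varphi^r P_n^{(r)}}{p, w},
\]
as required. There is no genuine obstacle; the only minor subtlety is that the choice of $\ccc$ must tighten $\ccc_0$ so as to swallow the factor $(3/2)^{1/p}$, which is why the corollary states $\ccc \leq 1$ with no explicit constant on the right-hand side. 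Note also that $\ccc$ is independent of $A$, since the right-hand side $\sup_{0<h\leq t}\norm{\Delta_{h\varphi}^r(P_n)}{p,w}$ used in the first step does not involve $A$ at all.
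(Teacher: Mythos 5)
Your proof is correct and follows exactly the same route as the paper: observe that $\Omega_\varphi^r(P_n, A, t)_{p, w} = \sup_{0<h\leq t} \norm{\Delta_{h\varphi}^r(P_n)}{\Lp(\I_{A, h}),w} \leq \sup_{0<h\leq t} \norm{\Delta_{h\varphi}^r(P_n)}{p,w}$, then apply the upper bound from the preceding lemma and shrink $\ccc$ to absorb the $(3/2)^{1/p}$ factor. The only cosmetic slip is the intermediate display $(3/2)^{1/p} t^r \leq \ccc^r n^{-r}$, which should read $(3/2)^{1/p} t^r \leq (3/2)^{1/p}\ccc^r n^{-r} \leq n^{-r}$; the final conclusion and choice of $\ccc$ are nonetheless correct.
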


%
%
%

 \sect{Inverse theorem for $1\leq p < \infty$} \label{invbig}

\begin{theorem} \label{conversethm}
Suppose that $w$ is  a doubling weight from the class $\W(\Z)$,
 $r\in\N$, $1\leq p <  \infty$, and $f\in\Lp^w$.
Then
\[
 \w_\varphi^r(f, A,  n^{-1})_{p, w}
  \leq  c n^{-r}   \sum_{k=1}^{n}    k^{r-1  }     E_{k}(f)_{p,w } ,
\]
where
the constant $c$ depends only on $r$, $p$, $A$    and the  weight $w$.
\end{theorem}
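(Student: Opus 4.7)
The plan is to split $\w_\varphi^r(f,A,1/n)_{p,w}$ into its main part and its near-singularity part according to \ineq{complete}, and estimate each piece via a dyadic telescoping argument over polynomials of near best approximation. Throughout, let $P_k \in \Poly_k$ satisfy $\norm{f-P_k}{p,w} \leq c E_k(f)_{p,w}$, fix $N$ with $2^N \leq n < 2^{N+1}$, and write
\[
P_n = P_1 + \sum_{k=0}^{N-1}\bigl(P_{2^{k+1}} - P_{2^k}\bigr) + (P_n - P_{2^N}).
\]

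First I would handle the main part $\Omega_\varphi^r(f,A,1/n)_{p,w}$. Using the trivial estimate $\Omega_\varphi^r(f,A,1/n)_{p,w} \leq c\bigl(\Omega_\varphi^r(f-P_n,A,1/n)_{p,w} + \Omega_\varphi^r(P_n,A,1/n)_{p,w}\bigr)$ (with $c=1$ for $p\ge 1$), \lem{lem7.1} gives $\Omega_\varphi^r(f-P_n,A,1/n)_{p,w} \leq c\norm{f-P_n}{p,w} \leq c E_n(f)_{p,w}$, while \lem{lem7.2} yields
\[
\Omega_\varphi^r(P_n,A,1/n)_{p,w} \leq c n^{-r}\norm{\varphi^r P_n^{(r)}}{p,w}.
\]
Applying the Markov--Bernstein estimate of \thm{thm5.5} to each difference $P_{2^{k+1}}-P_{2^k} \in \Poly_{2^{k+1}}$ and to $P_n - P_{2^N}\in\Poly_n$, I obtain
\[
\norm{\varphi^r P_n^{(r)}}{p,w} \leq c\sum_{k=0}^{N-1} 2^{(k+1)r}\norm{P_{2^{k+1}}-P_{2^k}}{p,w} + c n^r\norm{P_n-P_{2^N}}{p,w} \leq c\sum_{k=0}^{N} 2^{kr} E_{2^k}(f)_{p,w},
\]
after bounding each $\norm{P_{2^{k+1}}-P_{2^k}}{p,w}\leq c(E_{2^{k+1}}(f)_{p,w}+E_{2^k}(f)_{p,w})\leq c E_{2^k}(f)_{p,w}$. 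The standard comparison
\[
\sum_{k=0}^{N} 2^{kr} E_{2^k}(f)_{p,w} \leq c\sum_{\mu=1}^{n} \mu^{r-1} E_\mu(f)_{p,w}
\]
(using monotonicity of $E_\mu$) combined with the $n^{-r}$ factor gives the required bound for the main part.

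Next I would handle the near-singularity terms $E_r(f)_{\Lp(\Z_{2A,1/n}^j),w}$ for $1\leq j\leq M$. Since $q_r^{(\nu)}(z_j)=P_n^{(\nu)}(z_j)$, $0\leq\nu\leq r-1$, can be chosen as the Taylor polynomial of $P_n$ at $z_j$, \lem{lem8.5j} (with $2A$ in place of $A$) gives
\[
E_r(P_n)_{\Lp(\Z_{2A,1/n}^j),w} \leq \norm{P_n-q_r}{\Lp(\Z_{2A,1/n}^j),w} \leq c n^{-r}\norm{\varphi^r P_n^{(r)}}{p,w}.
\]
Combined with the trivial bound $E_r(f-P_n)_{\Lp(\Z_{2A,1/n}^j),w}\leq \norm{f-P_n}{p,w}\leq c E_n(f)_{p,w}$ and the subadditivity of $E_r$, this reduces the near-singularity part to the same derivative norm handled above, and the telescoping argument again delivers the right-hand side.

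The main obstacle is the treatment of $E_r(f)_{\Lp(\Z_{2A,1/n}^j),w}$: directly bounding it by $E_r(f)_{\Lp(\text{small set}),w}$ gives only crude information, and it is essential to bypass $f$ entirely by passing to the polynomial $P_n$ and invoking the Taylor-matching bound of \lem{lem8.5j}. This is precisely the step where the class $\W(\Z)$ enters (via the equivalence $w\sim w_n$ near $z_j$) and where the ``crucial lemma'' of Section~\ref{cruciallemmas} does its work; for $p<1$ one would later replace \lem{lem8.5j} by \lem{crucialpless1} and \lem{lem7.2} by \cor{cor7.4}, but that is postponed to Section~\ref{invsmall}.
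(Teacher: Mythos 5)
Your proof is correct and follows essentially the same route as the paper: dyadic telescoping over near-best approximants, \lem{lem7.1} and \lem{lem7.2} for the main part, \lem{lem8.5j} via Taylor matching for the near-singularity part, and \thm{thm5.5} to reduce to the sum $\sum_{k=1}^n k^{r-1}E_k(f)_{p,w}$. The only cosmetic difference is that you telescope all the way to $P_n$ (adding the extra piece $P_n-P_{2^N}$) and extract $\norm{\varphi^r P_n^{(r)}}{p,w}$ once, feeding it into both terms, whereas the paper stops the telescope at $P_{2^N}^*$ and applies the dyadic decomposition separately inside the main-part and near-singularity estimates.
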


\begin{proof}
Let $P_n^* \in \Poly_n$ denote a polynomial of (near) best approximation to $f$ with weight $w$, \ie
\[
c \norm{f-P_n^*}{p,w} \leq   \inf_{P_n\in\Poly_n} \norm{f-P_n}{p,w} =: E_{n}(f)_{p,w} .
\]
We let $N\in\N$ be such that $2^N \leq n < 2^{N+1}$.
To estimate $\Omega_\varphi^r(f, A, n^{-1})_{p, w}$,
using \lem{lem7.1} we have
\begin{eqnarray*}
\Omega_\varphi^r(f, A, n^{-1})_{p, w} & \leq & \Omega_\varphi^r (f, A, 2^{-N})_{p, w} \\
& \leq & \Omega_\varphi^r (f - P_{2^N}^*, A, 2^{-N})_{p, w} + \Omega_\varphi^r (P_{2^N}^*, A, 2^{-N})_{p, w} \\
& \leq & c \norm{f - P_{2^N}^*}{p, w } + \Omega_\varphi^r (P_{2^N}^*, A, 2^{-N})_{p, w} \\
& \leq &
 c E_{2^N}(f)_{p,w} + \Omega_\varphi^r (P_{2^N}^*, A, 2^{-N})_{p, w}.
\end{eqnarray*}
Now, using
\be \label{decom}
P_{2^N}^* =  P_1^* + \sum_{i=0}^{N-1} (P_{2^{i+1}}^* - P_{2^{i}}^*)
\ee
as well as \lem{lem7.2}  we have
\begin{eqnarray*}
\Omega_\varphi^r (P_{2^N}^*, A, 2^{-N})_{p, w}  &\leq &
 \sum_{i=0}^{N-1} \Omega_\varphi^r  \left( P_{2^{i+1}}^* - P_{2^{i}}^*, A, 2^{-N}\right)_{p, w }
 \leq
 c 2^{-Nr } \sum_{i=0}^{N-1}    \norm{ \varphi^r \left(P_{2^{i+1}}^* - P_{2^{i}}^*\right)^{(r)}}{p,w} .
\end{eqnarray*}

Now, for each $1\leq j\leq M$, taking into account that $\Z_{2A,t_1}^j \subset \Z_{2A,t_2}^j$ if $t_1 \leq t_2$, we have
\begin{eqnarray*}
E_r(f)_{\Lp(\Z_{2A,1/n}^j), w}    &\leq&  \inf_{q  \in\Poly_r} \norm{f-q }{\Lp(\Z_{2A,2^{-N}}^j), w} \\
 & \leq &   \norm{f-P_{2^N}^*}{\Lp(\Z_{2A,2^{-N}}^j), w} +   \inf_{q  \in\Poly_r} \norm{P_{2^N}^*-q }{\Lp(\Z_{2A,2^{-N}}^j), w} \\
 & \leq & c E_{2^N}(f)_{p, w} +   \norm{P_{2^N}^*-q_r(P_{2^N}^*) }{\Lp(\Z_{2A,2^{-N}}^j), w},
\end{eqnarray*}
 where $q_r(g)$ denotes the Taylor polynomial of degree $<r$ at $z_j$ for $g$.
Using \ineq{decom} again,
    noting that
\be \label{extrataylor}
q_r(P_{2^N}^*) =  P_1^* + \sum_{i=0}^{N-1} q_r(P_{2^{i+1}}^* - P_{2^{i}}^*),
\ee
and taking \lem{lem8.5j} into account we have
\begin{eqnarray*}
\norm{P_{2^N}^*-q_r(P_{2^N}^*) }{\Lp(\Z_{2A,2^{-N}}^j), w} & \leq & \sum_{i=0}^{N-1} \norm{ (P_{2^{i+1}}^* - P_{2^{i}}^*) - q_r(P_{2^{i+1}}^* - P_{2^{i}}^*)}{\Lp(\Z_{2A,2^{-N}}^j), w} \\
& \leq & c \sum_{i=0}^{N-1}  2^{-Nr} \norm{\varphi^r (P_{2^{i+1}}^* - P_{2^{i}}^*)^{(r)}}{p, w} .
\end{eqnarray*}
Hence,
\[
 \w_\varphi^r(f, A,  n^{-1})_{p, w} \leq c E_{2^N}(f)_{p, w} + c 2^{-Nr } \sum_{i=0}^{N-1}    \norm{ \varphi^r \left(P_{2^{i+1}}^* - P_{2^{i}}^*\right)^{(r)}}{p,w}.
\]

Now, using    \thm{thm5.5} we have

\begin{eqnarray*}
 \w_\varphi^r(f, A,   n^{-1})_{p, w} &\leq & c E_{2^N}(f)_{p, w} +
  c 2^{-Nr } \sum_{i=0}^{N-1}   2^{ir }  \norm{ P_{2^{i+1}}^* - P_{2^{i}}^*  }{p, w}\\
 &\leq&
  c 2^{-Nr } \sum_{i=0}^{N}   2^{ir }    E_{2^i}(f)_{p,w } \\
& \leq &  c n^{-r}  \left(E_{1}(f)_{p,w} +  \sum_{i=1}^{N}  \sum_{k=2^{i-1}+1}^{2^{i}} k^{r-1 }     E_{k}(f)_{p,w} \right) \\
 & \leq &
c n^{-r}  \sum_{k=1}^{n}    k^{r-1}     E_{k}(f)_{p,w}
\end{eqnarray*}
with all constants $c$ depending only on $r$, $p$, $A$  and the weight $w$.
\end{proof}

\sect{Inverse theorem for $0<p<1$} \label{invsmall}

 \begin{theorem} \label{conversethmpless1}
Suppose that $w$ is  a doubling weight from the class $\W(\Z)$, and let
 $r\in\N$, $A >0$, $0< p <1$, and $f\in\Lp^w$.
Then there exists a positive constant $\ccc \leq 1$ depending only on $p$, $r$, $A$ and the doubling constant of $w$, and such that
\[
 \w_\varphi^r(f, A,   \ccc n^{-1})_{p, w}^p
  \leq  c n^{-rp}   \sum_{k=1}^{n}    k^{rp-1  }     E_{k}(f)_{p,w }^p ,
\]
where
the constant $c$ depends only on $r$, $p$, $A$   and the weight    $w$.
\end{theorem}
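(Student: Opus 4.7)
My plan is to mirror the proof of \thm{conversethm} with three substitutions tailored to the quasi-normed regime: I will replace additive $\Lp$ triangle inequalities throughout by their $p$-power subadditive versions (valid for $0<p<1$), I will use \cor{cor7.4} in place of \lem{lem7.2} (this is where the restriction $t\leq\ccc/n$ enters), and I will use \lem{crucialpless1} in place of \lem{lem8.5j}. The constant $\ccc$ will be taken as the minimum of the constant appearing in \cor{cor7.4} and a sufficiently small constant depending on $A$ and the $\theta$ from \lem{crucialpless1}, chosen so that all the set inclusions below hold uniformly in the dyadic index.

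Fix $N\in\N$ with $2^N\leq n<2^{N+1}$, and for each $i$ let $P_{2^i}^*\in\Poly_{2^i}$ denote a polynomial of near best approximation to $f$ in $\Lp^w$. Starting from the definition of $\w_\varphi^r$, $p$-power subadditivity together with \lem{lem7.1} applied to $f-P_{2^N}^*$ yields
\[
 \w_\varphi^r(f,A,\ccc/n)_{p,w}^p \leq cE_{2^N}(f)_{p,w}^p + \Omega_\varphi^r(P_{2^N}^*,A,\ccc/n)_{p,w}^p + \sum_{j=1}^M E_r(f)_{\Lp(\Z_{2A,\ccc/n}^j),w}^p.
\]
For the main-part term I will use the telescoping \ineq{decom}, $p$-power subadditivity, \cor{cor7.4} (applicable because $\ccc/n\leq\ccc/2^i$ for $i\leq N$), and \thm{thm5.5} to obtain
\[
 \Omega_\varphi^r(P_{2^N}^*,A,\ccc/n)_{p,w}^p \leq cn^{-rp}\sum_{i=0}^{N-1}\norm{\varphi^r(P_{2^{i+1}}^*-P_{2^i}^*)^{(r)}}{p,w}^p \leq cn^{-rp}\sum_{i=0}^{N-1}2^{irp}\norm{P_{2^{i+1}}^*-P_{2^i}^*}{p,w}^p.
\]

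For each $E_r(f)_{\Lp(\Z_{2A,\ccc/n}^j),w}^p$, I will replace $f$ by $P_{2^N}^*$ at a cost of $cE_{2^N}(f)_{p,w}^p$, then use $q_r(P_{2^N}^*)$, the Taylor polynomial of degree $<r$ of $P_{2^N}^*$ at $z_j$, as the comparison polynomial. The Taylor polynomial telescopes via \ineq{extrataylor}, and \lem{crucialpless1} applies to each dyadic block provided $\Z_{2A,\ccc/n}^j\subset\Z_{\theta,1/2^i}^j$ for all $0\leq i\leq N-1$; this inclusion is guaranteed by taking $\ccc$ small enough relative to $\theta/A$, using that $\rho(\ccc/n,z_j)\leq\rho(\ccc/2^i,z_j)$ for $i\leq N$. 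This produces
\[
 \sum_{j=1}^M\norm{P_{2^N}^*-q_r(P_{2^N}^*)}{\Lp(\Z_{2A,\ccc/n}^j),w}^p \leq cn^{-rp}\sum_{i=0}^{N-1}2^{irp}\norm{P_{2^{i+1}}^*-P_{2^i}^*}{p,w}^p,
\]
matching the main-part estimate. I will finish by bounding $\norm{P_{2^{i+1}}^*-P_{2^i}^*}{p,w}^p\leq c(E_{2^{i+1}}(f)_{p,w}^p+E_{2^i}(f)_{p,w}^p)\leq cE_{2^i}(f)_{p,w}^p$ and performing the standard dyadic-to-arithmetic conversion
\[
 n^{-rp}\sum_{i=0}^{N}2^{irp}E_{2^i}(f)_{p,w}^p\leq cn^{-rp}\sum_{k=1}^n k^{rp-1}E_k(f)_{p,w}^p,
\]
which follows from the monotonicity of $E_k(f)_{p,w}$ in $k$. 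The main obstacle will be the uniform-in-$i$ choice of $\ccc$ that simultaneously validates the hypothesis $t\leq\ccc/n$ of \cor{cor7.4} at every dyadic scale and the set-inclusion hypothesis $\Z_{2A,\ccc/n}^j\subset\Z_{\theta,1/2^i}^j$ of \lem{crucialpless1}; this is why the final $\ccc$ is forced to depend on $A$ in addition to $r$, $p$, and the doubling constant of $w$.
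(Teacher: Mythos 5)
Your architecture is exactly the paper's: the same dyadic telescoping \ineq{decom} and \ineq{extrataylor}, the same three substitutions ($p$-th power subadditivity, \cor{cor7.4} in place of \lem{lem7.2}, \lem{crucialpless1} in place of \lem{lem8.5j}), essentially the same choice $\ccc\le\min\{\ccc_{\mathrm{cor}},\theta/(2A)\}$, and the same dyadic-to-arithmetic conversion at the end; moreover the displayed estimates you assert are the correct ones. The problem is the justification you give for the uniform factor $n^{-rp}$ per dyadic block. Your stated conditions --- ``\cor{cor7.4} is applicable because $\ccc/n\le\ccc/2^i$'' and ``\lem{crucialpless1} applies to each block provided $\Z_{2A,\ccc/n}^j\subset\Z_{\theta,1/2^i}^j$ for all $i\le N-1$'' --- amount to applying those results at the block's own degree $2^{i+1}$. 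At that scale they only give
\[
\Omega_\varphi^r\bigl(P_{2^{i+1}}^*-P_{2^i}^*,A,\ccc/n\bigr)_{p,w}^p\le c\,2^{-(i+1)rp}\norm{\varphi^r\bigl(P_{2^{i+1}}^*-P_{2^i}^*\bigr)^{(r)}}{p,w}^p ,
\]
and the analogous bound with $2^{-(i+1)rp}$ for the Taylor-remainder terms, not the $c\,n^{-rp}$ you wrote. After \thm{thm5.5} the product $2^{-(i+1)rp}\cdot 2^{(i+1)rp}$ collapses to a constant, and the argument as justified yields only $c\sum_{i=0}^{N-1}E_{2^i}(f)_{p,w}^p$, which is genuinely weaker than the target $c\,n^{-rp}\sum_{k=1}^{n}k^{rp-1}E_k(f)_{p,w}^p$ (if $E_k(f)_{p,w}$ stays bounded below, the former is of order $\log n$ while the latter is of order $1$).

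The repair is small and is precisely what the paper does: every block $P_{2^{i+1}}^*-P_{2^i}^*$ lies in $\Poly_{2^N}\subset\Poly_n$, so apply \cor{cor7.4} and \lem{crucialpless1} once at the top scale, with degree parameter $2^N$ (or $n$), uniformly in $i$. Then each block carries the factor $2^{-Nrp}\sim n^{-rp}$, and the only requirements on $\ccc$ are $\ccc\le\ccc_{\mathrm{cor}}$ (so that $t=\ccc/n\le\ccc_{\mathrm{cor}}/n$) and the single inclusion $\Z_{2A,\ccc/n}^j\subset\Z_{\theta,1/n}^j$, which holds as soon as $2A\ccc\le\theta$ because $\rho(\ccc/n,z_j)\le\ccc\,\rho(1/n,z_j)$ for $\ccc\le1$. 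The family of inclusions with targets $\Z_{\theta,1/2^i}^j$ that you verified are the easy, too-large ones and are not what makes your displayed inequalities true. With this one correction your proposal coincides with the paper's proof.
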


\begin{proof} The method of the proof is standard and well known (see \cite{djl} or \cite{k-weighted}).
With the same notation as in the proof of \thm{conversethm} (\ie $P_n^*$ is a polynomial of (near) best weighted approximation to $f$ and $2^N \leq n < 2^{N+1}$), we have using \lem{lem7.1} (note that we will be putting restrictions on $\ccc$ as we go along)
\begin{eqnarray*}
\Omega_\varphi^r(f, A, \vartheta n^{-1})_{p, w}^p & \leq & 
 c E_{2^N}(f)_{p,w}^p + \Omega_\varphi^r (P_{2^N}^*, A, \vartheta 2^{-N})_{p, w}^p
\end{eqnarray*}
and, using \ineq{decom},
\begin{eqnarray*}
\Omega_\varphi^r (P_{2^N}^*, A, \vartheta 2^{-N})_{p, w}^p  &\leq &
 \sum_{i=0}^{N-1} \Omega_\varphi^r  \left( P_{2^{i+1}}^* - P_{2^{i}}^*, A, \vartheta 2^{-N}\right)_{p, w }^p.
\end{eqnarray*}
\lem{lem7.2} can no longer be used, and so we employ \cor{cor7.4} (we assume that the current constant $\ccc \leq 1$  is not bigger than $\ccc$ from \cor{cor7.4})
 which implies
\begin{eqnarray*}
\Omega_\varphi^r (P_{2^N}^*, A, \vartheta 2^{-N})_{p, w}^p
& \leq &
 2^{-N rp}  \sum_{i=0}^{N-1}  \norm{\varphi^r  \left( P_{2^{i+1}}^* - P_{2^{i}}^* \right)^{(r)}}{p, w }^p.
\end{eqnarray*}

For each $1\leq j \leq M$,   recalling that $q_r(g)$ denotes the Taylor polynomial of degree $<r$ at $z_j$ for $g$,  we have
\begin{eqnarray*}
E_r(f)_{\Lp(\Z_{2A,\ccc/n}^j), w}^p
   &\leq&  \inf_{q  \in\Poly_r} \norm{f-q }{\Lp(\Z_{2A,\ccc 2^{-N}}^j), w}^p \\
 & \leq &  c E_{2^N}(f)_{p, w}^p +   \norm{P_{2^N}^*-q_r(P_{2^N}^*) }{\Lp(\Z_{2A, \ccc 2^{-N}}^j), w}^p .
\end{eqnarray*}
Now, we make sure that $\ccc$ is so small that
 \[
 \Z_{2A, \ccc 2^{-N}}^j  \subset  \Z_{\theta,   2^{-N}}^j , \quad 1\leq j \leq M,
 \]
 where $\theta$ is the constant from \lem{crucialpless1}. This is achieved if $\ccc \leq \theta/(2A)$.
 Therefore, \lem{crucialpless1} implies
\[
\norm{ Q  - q_r(Q) }{\Lp(  \Z_{2A, \ccc 2^{-N}}^j ), w} \leq c 2^{-Nr} \norm{\varphi^r Q^{(r)}}{p, w} , \quad \mbox{\rm for any }\; Q\in \Poly_{2^N} ,
\]
with $c$ depending only on $r$, $p$ and the doubling constant of $w$.
Hence, using \ineq{decom} and \ineq{extrataylor} we obtain
\begin{eqnarray*}
\norm{P_{2^N}^*-q_r(P_{2^N}^*) }{\Lp(\Z_{2A, \ccc 2^{-N}}^j), w}^p & \leq &
\sum_{i=0}^{N-1} \norm{ (P_{2^{i+1}}^* - P_{2^{i}}^*) - q_r(P_{2^{i+1}}^* - P_{2^{i}}^*)}{\Lp(\Z_{2A,\ccc 2^{-N}}^j), w}^p \\
& \leq & c \sum_{i=0}^{N-1}  2^{-Nrp} \norm{\varphi^r (P_{2^{i+1}}^* - P_{2^{i}}^*)^{(r)}}{p, w}^p .
\end{eqnarray*}
Therefore,
\[
 \w_\varphi^r(f, A,  \ccc n^{-1})_{p, w}^p \leq c E_{2^N}(f)_{p, w}^p + c 2^{-Nrp } \sum_{i=0}^{N-1}    \norm{ \varphi^r \left(P_{2^{i+1}}^* - P_{2^{i}}^*\right)^{(r)}}{p,w}^p.
\]
Now, using   \thm{thm5.5}, similarly to  the case $1\leq p < \infty$, we get
\begin{eqnarray*}
 \w_\varphi^r(f, A,   \ccc n^{-1})_{p, w}^p &\leq & c E_{2^N}(f)_{p, w}^p +
  c 2^{-Nrp } \sum_{i=0}^{N-1}   2^{irp }  \norm{ P_{2^{i+1}}^* - P_{2^{i}}^*  }{p, w}^p\\
 &\leq&
  c 2^{-Nrp } \sum_{i=0}^{N}   2^{irp }    E_{2^i}(f)_{p,w }^p \\
 & \leq &
c n^{-rp}  \sum_{k=1}^{n}    k^{rp-1}     E_{k}(f)_{p,w}^p
\end{eqnarray*}
with all constants $c$ depending only on $r$, $p$  and the weight $w$.
\end{proof}

 \sect{Equivalence of moduli and Realization functionals} \label{real}

 Let $w$ be a doubling weight from the class $\W(\Z)$, $r \in\N$, $0<p<\infty$   and $f\in\Lp^w$.


We define the following  realization functionals as follows
 \[
R_{r,\varphi} (f, t,   \Poly_n)_{p, w} := \inf_{P_n\in\Poly_n} \left( \norm{f-P_n}{p, w} + t^r \norm{\varphi^r P_n^{(r)}}{p, w} \right) .
 \]
Clearly, $R_{r,\varphi} (f, t_1,   \Poly_n)_{p, w} \sim R_{r,\varphi} (f, t_2,   \Poly_n)_{p, w}$ if $t_1 \sim t_2$.

 \thm{jacksonthm} implies that, for every $n\geq N$ (with $N$ depending only on $r$,   $p$ and  $w$),
$\ccc >0$ and $A>0$,  there exists a polynomial $P_n \in\Poly_n$ such that
\be \label{kf1}
R_{r,\varphi} (f, 1/n, \Poly_n)_{p, w}   \leq c \widetilde \w_\varphi^r(f, A,   \ccc/n)_{p, w },
\ee
where constants $c$  depend only on  $r$,  $p$, $\ccc$, $A$ and the weight $w$.

 \lem{lem7.1}  implies that, for any $0<p<\infty$, $f\in\Lp^w$,  $r\in\N$ and $A,  t >0$, and any $g\in\Lp^w$,
 \begin{eqnarray} 
 \Omega_\varphi^r(f, A,  t)_{p, w}    &\leq&  c \Omega_\varphi^r(f-g, A,  t)_{p, w}  + c \Omega_\varphi^r(g, A,  t)_{p, w} \\ \nonumber
 & \leq & c \norm{f-g}{p,w} + c \Omega_\varphi^r(g, A,  t)_{p, w} ,
\end{eqnarray}
where $c$ depends only on  $r$, $p$, $A$  and the weight $w$.

Now, in the case $1\leq p <\infty$, \lem{lem7.2} additionally yields that,
if  $g$ is such that $g^{(r-1)}\in \AC_\loc \left( (-1,1)\setminus \Z  \right)$
  and $\norm{\varphi^r g^{(r)}}{p, w} < \infty$, then
 \begin{eqnarray*}
 \Omega_\varphi^r(f, A, t)_{p, w}
 & \leq &
 c \norm{f-g}{p,w} + c t^r \norm{  \varphi^r g^{(r)}}{p, w}.
\end{eqnarray*}
This,
in particular, implies that, if $1\leq p <\infty$, then for any $n\in\N$, $\ccc >0$, $A >0$ and $0<t \leq \ccc/n$,
 \begin{eqnarray} \label{kf3}
 \Omega_\varphi^r(f, A, t)_{p, w}
 & \leq &
 c \norm{f-P_n}{p,w} + c n^{-r} \norm{  \varphi^r P_n^{(r)}}{p, w},
\end{eqnarray}
where $c$ depends only on  $r$, $p$, $\ccc$,  $A$  and the weight $w$.

If we use \cor{cor7.4} instead of \lem{lem7.2} then we conclude that \ineq{kf3} is valid if $0<p<1$ as well, but now $0<\ccc\leq 1$ is some fixed constant that depends on $r$, $p$ and the doubling constant of $w$.

Now, using \lem{lem8.5j} we have, for $1\leq p <\infty$, any $\ccc>0$ and $0<t\leq \ccc/n$ (taking into account that $\Z_{2A,t}^j \subset \Z_{2A, \ccc/n}^j \subset \Z_{2A\ccc\max\{\ccc,1\}, 1/n}^j$),
\begin{eqnarray} \label{kf4}
\sum_{j=1}^M \inf_{q\in\Poly_r} \norm{f- q}{\Lp(\Z_{2A,t}^j), w}
 & \leq & c \norm{f-P_n}{p,w}   + \sum_{j=1}^M \inf_{q\in\Poly_r} \norm{P_n - q}{\Lp(\Z_{2A\ccc\max\{\ccc,1\}, 1/n}^j), w}\\ \nonumber
 & \leq &
 c \norm{f-P_n}{p,w} +    c n^{-r}  \norm{\varphi^r  P_n^{(r)} }{p, w},
\end{eqnarray}
where constants $c$ depend on $r$, $p$, $A$, $\ccc$ and the doubling constant of $w$.

In the case $0<p<1$, using \lem{crucialpless1} we conclude that there exists $0<\ccc \leq 1$ depending only on $p$, $A$ and the doubling constant of $w$ such that, for $0<t\leq \ccc/n$, \ineq{kf4} is satisfied with constants $c$ that depend on $r$, $p$, $A$, and the doubling constant of $w$. Note that this follows from the observation that
$\Z_{2A,t}^j \subset \Z_{2A, \ccc/n}^j \subset \Z_{2A\ccc, 1/n}^j \subset \Z_{\theta, 1/n}^j$, where $\theta$ is the constant from the statement of \lem{crucialpless1} and    $\ccc := \min\{\theta/(2A), 1\}$.

Hence, we actually verified the validity of the following two corollaries.
First, \ineq{kf1}, \ineq{kf3} and \ineq{kf4} yield the following result.

\begin{corollary} \label{corr1}
Let $w$ be a doubling weight from the class $\W(\Z)$, $r \in\N$, $1\leq p<\infty$   and $f\in\Lp^w$.
Then there exists a constant $N\in\N$ depending on $r$, $p$ and the weight $w$ such that, for any $\ccc_2 \geq \ccc_1 >0$,  $n\geq N$,   $\ccc_1/n \leq t \leq \ccc_2/n$, and $A>0$, we have
\[
R_{r,\varphi} (f, t, \Poly_n)_{p, w}     \sim     \widetilde \w_\varphi^r(f, A,   t)_{p, w } \sim   \w_\varphi^r(f, A,   t)_{p, w } .
\]
\end{corollary}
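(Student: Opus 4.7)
The plan is to combine the three estimates $(\ref{kf1})$, $(\ref{kf3})$, $(\ref{kf4})$ already established immediately above the statement with the elementary monotonicity/scaling properties of $R_{r,\varphi}$, $\widetilde\w_\varphi^r$, and $\w_\varphi^r$ listed in Section~\ref{moduli}. Since a three-way equivalence is sought, it suffices to prove two directions and close the triangle with the trivial inequality $\widetilde\w_\varphi^r\le\w_\varphi^r$.

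First I would prove $\w_\varphi^r(f,A,t)_{p,w} \le c\,R_{r,\varphi}(f,t,\Poly_n)_{p,w}$. Fix any $P_n\in\Poly_n$. The upper bound $t\le\ccc_2/n$ lets me apply both $(\ref{kf3})$ and $(\ref{kf4})$ with $\ccc:=\ccc_2$; adding them yields
\[
\Omega_\varphi^r(f,A,t)_{p,w} + \sum_{j=1}^M E_r(f)_{\Lp(\Z_{2A,t}^j),w} \le c\norm{f-P_n}{p,w}+cn^{-r}\norm{\varphi^r P_n^{(r)}}{p,w}.
\]
The lower bound $t\ge\ccc_1/n$ converts $n^{-r}$ into $t^r$ (at the cost of a factor $\ccc_1^{-r}$), and taking the infimum over $P_n\in\Poly_n$ produces $R_{r,\varphi}(f,t,\Poly_n)_{p,w}$ on the right.

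Next I would prove $R_{r,\varphi}(f,t,\Poly_n)_{p,w}\le c\,\widetilde\w_\varphi^r(f,A,t)_{p,w}$. Since $t\sim 1/n$, the observation that $R_{r,\varphi}(f,\cdot,\Poly_n)_{p,w}$ is equivalent at comparable arguments reduces the task to $t=1/n$. I then invoke $(\ref{kf1})$ with $\ccc:=\ccc_1$ to obtain $R_{r,\varphi}(f,1/n,\Poly_n)_{p,w}\le c\,\widetilde\w_\varphi^r(f,A,\ccc_1/n)_{p,w}$, which lives at the argument $\ccc_1/n\le t$. To pull this back to $\widetilde\w_\varphi^r(f,A,t)_{p,w}$ I use property (iii) of Section~\ref{moduli}, namely $\widetilde\Omega_\varphi^r(f,A,t_2)_{p,w}\le(t_1/t_2)^{1/p}\widetilde\Omega_\varphi^r(f,A,t_1)_{p,w}$ for $t_1\ge t_2$, together with the obvious set-monotonicity $E_r(f)_{\Lp(\Z_{2A,t_2}^j),w}\le E_r(f)_{\Lp(\Z_{2A,t_1}^j),w}$ when $t_1\ge t_2$. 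The resulting multiplicative constant depends only on $\ccc_2/\ccc_1$ and $p$.

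Finally, $\widetilde\w_\varphi^r(f,A,t)_{p,w}\le\w_\varphi^r(f,A,t)_{p,w}$ is immediate from property (i) of Section~\ref{moduli} and closes the loop. There is no real obstacle here: the substantive work was done in Theorem~\ref{jacksonthm}, Lemmas~\ref{lem7.1}--\ref{lem7.2}, and Lemma~\ref{lem8.5j}, all of which enter only through the precompiled inequalities $(\ref{kf1})$, $(\ref{kf3})$, $(\ref{kf4})$. The only minor bookkeeping point is to verify that every implicit constant depends solely on $r$, $p$, $A$, $\ccc_1$, $\ccc_2$ and the weight $w$, which follows directly from the statements of the ingredients used.
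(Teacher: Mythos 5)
Your proposal is correct and is essentially the paper's own argument: the paper proves the corollary precisely by combining \ineq{kf1}, \ineq{kf3} and \ineq{kf4} (which encapsulate \thm{jacksonthm}, Lemmas~\ref{lem7.1}, \ref{lem7.2} and \ref{lem8.5j}) with the elementary monotonicity/scaling properties of the moduli and of $R_{r,\varphi}$, exactly as you assemble them. Your bookkeeping of $\ccc_1,\ccc_2$ via property (iii) and the inclusion $\Z_{2A,\ccc_1/n}^j\subset\Z_{2A,t}^j$ is the right way to make the uniformity in $t\in[\ccc_1/n,\ccc_2/n]$ explicit.
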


 \cor{corr1} implies, in particular,  that $\w_\varphi^r(f, A_1,  t_1)_{p, w } \sim \w_\varphi^r(f, A_2,  t_2)_{p, w }$ if $A_1 \sim A_2$ and $t_1 \sim t_2$.

In the case $0<p<1$, we have

\begin{corollary} \label{corr2}
Let $w$ be a doubling weight from the class $\W(\Z)$, $r \in\N$, $0<p<1$, $A>0$,   and $f\in\Lp^w$.
Then there exist  $N\in\N$  depending on $r$, $p$ and the weight $w$, and $\ccc>0$ depending on  $r$, $p$, $A$, and the doubling constant of $w$,  such that, for any $\ccc_1 \in (0, \ccc]$,   $n\geq N$,   $\ccc_1/n \leq t \leq \ccc/n$,   we have
\[
R_{r,\varphi} (f, t, \Poly_n)_{p, w}     \sim     \widetilde \w_\varphi^r(f, A,   t)_{p, w } \sim   \w_\varphi^r(f, A,   t)_{p, w } .
\]
\end{corollary}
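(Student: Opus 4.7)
The plan is to follow the same template as in the case $1\leq p <\infty$, using the three intermediate estimates \ineq{kf1}, \ineq{kf3} and \ineq{kf4} that have already been verified in the preceding discussion, and to keep track of the range of $t$ and $\ccc$ for which each of them holds when $0<p<1$.

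For the upper bound $R_{r,\varphi}(f,t,\Poly_n)_{p,w} \leq c\, \widetilde\w_\varphi^r(f,A,t)_{p,w} \leq c\, \w_\varphi^r(f,A,t)_{p,w}$, I would apply \ineq{kf1}, which is a direct consequence of the Jackson-type \thm{jacksonthm}, valid for all $n\geq N$ with $N$ depending only on $r$, $p$ and $w$, and for any $\ccc_1>0$ and $A>0$. Since $\widetilde\w_\varphi^r \leq \w_\varphi^r$ trivially, this direction requires no smallness restriction on $t$ beyond the already prescribed $t \leq \ccc/n$.

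For the reverse inequality $\w_\varphi^r(f,A,t)_{p,w} \leq c\, R_{r,\varphi}(f,t,\Poly_n)_{p,w}$ (which automatically implies the analogous bound for $\widetilde\w_\varphi^r$), I would pick a polynomial $P_n\in\Poly_n$ which nearly attains the infimum defining the realization functional, so that $\|f-P_n\|_{p,w} + t^r \|\varphi^r P_n^{(r)}\|_{p,w} \leq 2 R_{r,\varphi}(f,t,\Poly_n)_{p,w}$. Decompose $\w_\varphi^r(f,A,t)_{p,w}$ into its two pieces as in \ineq{complete}. The main-part piece $\Omega_\varphi^r(f,A,t)_{p,w}$ is controlled by \ineq{kf3}; crucially, in the range $0<p<1$ the derivation of \ineq{kf3} uses \cor{cor7.4} rather than \lem{lem7.2}, which forces $t \leq \ccc/n$ with some fixed $\ccc\leq 1$ depending on $r$, $p$ and the doubling constant of $w$. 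The sum of $E_r$-pieces is controlled by \ineq{kf4}, whose derivation in the $0<p<1$ range invokes \lem{crucialpless1} and hence requires the inclusion $\Z_{2A,t}^j \subset \Z_{\theta,1/n}^j$ with $\theta$ the constant from \lem{crucialpless1}; this is guaranteed as soon as $t \leq \theta/(2An)$.

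Thus, combining these two constraints, I would set $\ccc := \min\{\vartheta_0, \theta/(2A), 1\}$, where $\vartheta_0$ is the constant from \cor{cor7.4}; this $\ccc$ depends only on $r$, $p$, $A$ and the doubling constant of $w$. Then for any $\ccc_1 \in (0,\ccc]$ and $\ccc_1/n \leq t \leq \ccc/n$, the estimates \ineq{kf3} and \ineq{kf4} combine to give
\[
\w_\varphi^r(f,A,t)_{p,w} \leq c\bigl(\|f-P_n\|_{p,w} + n^{-r}\|\varphi^r P_n^{(r)}\|_{p,w}\bigr) \leq c\bigl(\|f-P_n\|_{p,w} + t^r\|\varphi^r P_n^{(r)}\|_{p,w}\bigr) \leq c\, R_{r,\varphi}(f,t,\Poly_n)_{p,w},
\]
since $t \sim 1/n$ in our range. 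Here the final constant $c$ depends only on $r$, $p$, $A$ and the weight $w$, as required. The main obstacle — which is already absorbed into the preceding lemmas — is precisely the $0<p<1$ handling of the local Taylor-polynomial estimate at the points of $\Z$, for which \lem{crucialpless1} substitutes the failed H\"older-inequality route of \lem{lem8.5j}; aside from threading the constants $\vartheta$ and $\theta$ correctly, the rest of the argument is a verbatim repetition of the scheme used for \cor{corr1}.
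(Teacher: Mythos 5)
Your proposal is correct and follows essentially the same route as the paper: the corollary is obtained by combining \ineq{kf1} (from \thm{jacksonthm}) with \ineq{kf3} (via \cor{cor7.4} for $0<p<1$) and \ineq{kf4} (via \lem{crucialpless1}), taking $\ccc$ to be the minimum of the constant from \cor{cor7.4} and $\theta/(2A)$ exactly as the paper does in the discussion preceding the two corollaries.
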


\cor{corr2} implies that, for  $A_1, A_2>0$, $A_1 \sim A_2$,  there exists $t_0 >0$ such that   $\w_\varphi^r(f, A_1,  t_1)_{p, w } \sim \w_\varphi^r(f, A_2,  t_2)_{p, w }$  for  $0< t_1, t_2 \leq t_0$ such that $t_1 \sim t_2$.

\sect{Appendix} \label{appendix}

\begin{lemma}  \label{nearbest}
 Suppose that $w$ is a doubling weight from the class $\W(\Z)$, $0<p<\infty$, $f\in\Lp^w$,
 and suppose that intervals $I$ and $J$ are such that $I\subset J \subset [-1,1]$ and $|J|\leq  c_0 |I| $.
 Then, for any $r\in\N$, if
 $q \in\Poly_r$ is a polynomial of near best approximation to $f$ on $I$ in the $\Lp$ (quasi)norm with weight $w$, \ie
  \[
\norm{f-q}{\Lp(I), w} \leq c_1 E_r(f)_{\Lp(I), w} ,
\]
then $q$ is also a polynomial of near best approximation to $f$ on $J$. In other words,
  \[
\norm{f-q}{\Lp(J), w} \leq c E_r(f)_{\Lp(J), w} ,
\]
where the constant $c$ depends only on $p$, $c_0$, $c_1$ and the weight $w$.
 \end{lemma}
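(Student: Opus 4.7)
My plan is to compare $q$ with a near-best approximant of $f$ on the larger interval $J$ and to reduce the lemma to a weighted polynomial extension inequality, which in turn would follow from the Remez inequality of \cor{cordw}. Choose $\tilde q\in\Poly_r$ with $\norm{f-\tilde q}{\Lp(J), w}\leq 2 E_r(f)_{\Lp(J), w}$ (clearly $E_r(f)_{\Lp(J), w}<\infty$ since $f\in\Lp^w$). Restricting to $I\subset J$ and using that $\tilde q\in\Poly_r$ together with the hypothesis on $q$, one immediately obtains
\[
\norm{f-q}{\Lp(I), w}\leq c_1 E_r(f)_{\Lp(I), w}\leq c_1\norm{f-\tilde q}{\Lp(I), w}\leq c_1\norm{f-\tilde q}{\Lp(J), w}.
\]

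Set $\underline p:=\min(p,1)$. The standard (quasi-)triangle inequality for $\Lp$-(quasi)norms (the $p$-th power inequality for $p<1$, and Minkowski combined with $(a+b)^p\leq 2^{p-1}(a^p+b^p)$ for $p\geq 1$) gives
\[
\norm{q-\tilde q}{\Lp(I), w}^{\underline p}\leq c\bigl(\norm{f-q}{\Lp(I), w}^{\underline p}+\norm{f-\tilde q}{\Lp(I), w}^{\underline p}\bigr)\leq c\norm{f-\tilde q}{\Lp(J), w}^{\underline p}.
\]
The key intermediate claim is the polynomial extension inequality
\[
\norm{P}{\Lp(J), w}\leq C^*\norm{P}{\Lp(I), w},\quad P\in\Poly_r,\qquad(\ast)
\]
with $C^*$ depending only on $r$, $p$, $c_0$, and the doubling constant of $w$. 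Granting $(\ast)$ and applying it to $P:=q-\tilde q$, a second application of the quasi-triangle inequality yields
\[
\norm{f-q}{\Lp(J), w}^{\underline p}\leq c\bigl(\norm{f-\tilde q}{\Lp(J), w}^{\underline p}+(C^*)^{\underline p}\norm{q-\tilde q}{\Lp(I), w}^{\underline p}\bigr)\leq c\,E_r(f)_{\Lp(J), w}^{\underline p},
\]
and the lemma follows upon taking the $(1/\underline p)$-th root.

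To establish $(\ast)$, I would affinely rescale $J$ onto $[-1,1]$; the pulled-back weight is doubling on $[-1,1]$ with the same doubling constant, and the rescaled polynomial remains in $\Poly_r$. Letting $E\subset[-1,1]$ denote the image of $J\setminus I$ (a union of at most two intervals), an elementary computation shows that $\int_E(1-t^2)^{-1/2}\,dt\leq 2\arccos(1/c_0)$. When this quantity is at most $1$ (which occurs for $c_0$ up to roughly $1.14$), \cor{cordw} applies directly with $n=r$ and $\Lambda=r\int_E\leq r$, producing $(\ast)$ after unwinding the rescaling, with a constant depending only on $r$, $p$, $c_0$, and the doubling constant of $w$. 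For larger $c_0$ one would interpose a chain $I=I_0\subset I_1\subset\cdots\subset I_N=J$ with $|I_{k+1}|\leq 1.1|I_k|$ and $N\sim\log c_0$, apply the base case at each step, and multiply the resulting constants. The main obstacle is precisely the interplay between the constraint $\Lambda\leq n$ required by \cor{cordw} and the actual size of $E$; the chaining device is exactly what keeps the accumulated Remez constant bounded uniformly in the location of $I$ inside $J$ and in terms of only the stated parameters.
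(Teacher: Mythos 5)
Your proposal is correct, and its outer skeleton (introduce a near-best $\tilde q\in\Poly_r$ on $J$, use the quasi-triangle inequality to reduce everything to comparing $\norm{q-\tilde q}{\Lp(J),w}$ with $\norm{q-\tilde q}{\Lp(I),w}$) is the same as the paper's; the genuine difference is how the polynomial comparison $(\ast)$ is proved. The paper exploits the hypothesis $w\in\W(\Z)$: since $|I|\sim|J|$, one can locate a subinterval $I_\mu\subset I$ with $|I_\mu|\sim|I|$ that stays away from $\Z$, so that by \lemp{iv} the weight is essentially constant on $I_\mu$; then $(\ast)$ follows from the comparability of $\C$-norms of degree~$<r$ polynomials on intervals of comparable length, a Nikolskii-type step on $I_\mu$, and the doubling property to compare $w(J)$ with $w(I_\mu)$. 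You instead restrict $w$ to $J$, rescale to $[-1,1]$ (correctly observing that the restricted weight is doubling with constant at most that of $w$, since $\int_{2Q\cap J}w\leq\int_{2Q\cap[-1,1]}w\leq L\int_Q w$ for $Q\subset J$), and invoke the weighted Remez inequality \cor{cordw} with $n=r$, using the chaining device to stay inside the regime $\int_E(1-t^2)^{-1/2}\,dt\leq 1$ forced by the constraint $\Lambda\leq n$. This buys genuine extra generality: your argument never uses $w\in\W(\Z)$, so the lemma holds for arbitrary doubling weights, with the constant depending only on $r$, $p$, $c_0$, $c_1$ and the doubling constant; note that the dependence on $r$ is not a loss, since the paper's own proof also incurs it (through the $\C$-norm comparison and Nikolskii inequality for $\Poly_r$), even though $r$ is omitted from the statement. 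What the paper's route buys is that it is more elementary at this point — no Remez machinery for $0<p<1$ is needed, only \lemp{iv}, sup-norm comparability and doubling — and it stays self-contained within the $\W(\Z)$ framework the rest of the paper uses. Two cosmetic points to tidy up: if $E_r(f)_{\Lp(J),w}=0$ you should either note that the infimum is attained ($\Poly_r$ is finite dimensional) or take $\tilde q$ within $\varepsilon$ and let $\varepsilon\to0$; and your bound $2\arccos(1/c_0)$ for the cosine-measure of $E$ is justified by concavity only while both end pieces have rescaled length $<1$, which is all you need since you only invoke it for ratios close to $1$ before chaining.
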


\begin{proof} First, we assume that $|I| \leq \D/2$, and so $I$ may contain at most one $z_j$ from $\Z$.
Now, we denote by $a$  the midpoint of $I$ and
 let $n\in\N$ be such that
 \[
\rho_{n+1}(a) < |I|/1000 \leq \dn(a) .
 \]
 Then
 $ |I|/1000 \leq \dn(a) \leq  |I|/250$.

We recall again that   $\dnx\leq |I_i| \leq 5 \dnx$ for $x\in I_i$, and $|I_{i\pm 1}| \leq 3 |I_i|$. Hence, if $a \in I_\nu$, for some $\nu$, then
\[
\sum_{i=0}^2 |I_{\nu-i}| \leq (1+3+9)|I_\nu| = 13|I_\nu|\leq 65 \dn(a) < |I|/2 ,
\]
and so $I_{\nu-1} \cup I_{\nu-2} \subset I$.
Similarly,
\[
\sum_{i=0}^2 |I_{\nu+i}| \leq (1+3+9)|I_\nu| = 13|I_\nu|\leq 65 \dn(a) < |I|/2 ,
\]
and so $I_{\nu+1} \cup I_{\nu+2} \subset I$.

%

In other words, $I$ contains at least $5$ adjacent  intervals $I_{\nu+i}$, $i=2,1,0,-1,-2$.  Since $I$ contains at most one $z_j$, we now can pick one of these $5$ intervals in such a way that there is another interval $I_i$ between it and $z_j$ (if $I$ does not contain any $z_j$'s, we pick one of the intervals ``in the middle'' of $I$, for example $I_{\nu}$). Suppose that the interval that we picked is $I_\mu$.
Then,
\[
|I| \geq |I_\mu| \geq |I_\nu|/9 \geq \dn(a)/9 \geq |I|/9000,
\]
\ie $|I_\mu|\sim |I|$.
Also,  $I_\mu \subset \I_{c, 1/n}$ with some absolute constant $c$, and \lemp{iv} implies that $w(x) \sim w (y)$, for $x,y\in I_\mu$, with equivalence constants depending only on $w$.

%
%

Suppose now  that $\widetilde q$ is a polynomial of near best approximation of $f$ on $J$, \ie
 \[
\norm{f-\widetilde q}{\Lp(J), w} \leq c E_r(f)_{\Lp(J), w} .
\]
Then, taking into account that $|I_\mu| \sim |I| \sim |J|$ and using properties of doubling weights (see \cite[Lemma 2.1(vi) and Lemma 7.1]{mt2000}, for example), we have
\begin{eqnarray*}
\norm{\widetilde q -q}{\Lp(J), w}^p  &=&  \int_J w(x) |\widetilde q(x) -q(x)|^p dx \leq \norm{\widetilde q - q}{\C(J)}^p \int_J w(x) dx \\
& \leq &
c \norm{\widetilde q - q}{\C(I_\mu)}^p \int_{I_\mu} w(x) dx
  \leq
c|I_\mu|^{-1} \norm{\widetilde q - q}{\Lp(I_\mu)}^p \int_{I_\mu} w(x_\mu) dx \\
&\leq& c \int_{I_\mu} |\widetilde q(x) -q(x)|^p w(x_\mu) dx
\leq c \norm{\widetilde q -q}{\Lp(I_\mu), w}^p \\
& \leq & c \norm{\widetilde q -q}{\Lp(I), w}^p .
\end{eqnarray*}

Therefore,
\begin{eqnarray*}
\norm{f-q}{\Lp(J), w} &\leq& c \norm{f-\widetilde q}{\Lp(J), w}+ c \norm{\widetilde q -q}{\Lp(J), w} \\
& \leq &
c \norm{f-\widetilde q}{\Lp(J), w} + c \norm{\widetilde q -q}{\Lp(I), w} \\
& \leq &
c \norm{f-\widetilde q}{\Lp(J), w} + c \norm{\widetilde q -f}{\Lp(I), w}    +  c \norm{  f -q}{\Lp(I), w}  \\
& \leq &
c \norm{f-\widetilde q}{\Lp(J), w}     +  c \norm{  f -q}{\Lp(I), w}  \\
& \leq &
c E_r(f)_{\Lp(J), w} + c E_r(f)_{\Lp(I), w} \\
& \leq &
c E_r(f)_{\Lp(J), w} ,
\end{eqnarray*}
 and the proof is complete if $|I| \leq \D/2$.

If $|I| > \D/2$, then $|I|\sim |J|\sim 1$, and we take $n\in\N$ to be such $I$ contains at least $4M+4$ intervals $I_i$. Then $I$ contains   $4$ adjacent intervals $I_i$ not containing any points from $\Z$, and we can use the same argument as above.
\end{proof}

\begin{bibsection}
\begin{biblist}


\bib{cm}{article}{
   author={Criscuolo, G.},
   author={Mastroianni, G.},
   title={Fourier and Lagrange operators in some weighted Sobolev-type
   spaces},
   journal={Acta Sci. Math. (Szeged)},
   volume={60},
   date={1995},
   number={1-2},
   pages={131--148},
}

\bib{cu}{article}{
   author={Cruz-Uribe, D.},
   title={Piecewise monotonic doubling measures},
   journal={Rocky Mountain J. Math.},
   volume={26},
   date={1996},
   number={2},
   pages={545--583},
}

\bib{dbmv}{article}{
   author={De Bonis, M. C.},
   author={Mastroianni, G.},
   author={Viggiano, M.},
   title={$K$-functionals, moduli of smoothness and weighted best
   approximation of the semiaxis},
   conference={
      title={Functions, series, operators},
      address={Budapest},
      date={1999},
   },
   book={
      publisher={J\'anos Bolyai Math. Soc., Budapest},
   },
   date={2002},
   pages={181--211},
}

 \bib{dbmr}{article}{
   author={De Bonis, M. C.},
   author={Mastroianni, G.},
   author={Russo, M. G.},
   title={Polynomial approximation with special doubling weights},
   journal={Acta Sci. Math. (Szeged)},
   volume={69},
   date={2003},
   number={1-2},
   pages={159--184},
   issn={0001-6969},
}

\bib{dly}{article}{
   author={DeVore, R. A.},
   author={Leviatan, D.},
   author={Yu, X. M.},
   title={Polynomial approximation in $L_p$ $(0<p<1)$},
   journal={Constr. Approx.},
   volume={8},
   date={1992},
   number={2},
   pages={187--201},
}

\bib{dl}{book}{
    author={DeVore, R. A.},
    author={Lorentz, G. G.},
     title={Constructive approximation},
    series={Grundlehren der Mathematischen Wissenschaften [Fundamental
            Principles of Mathematical Sciences]},
    volume={303},
 publisher={Springer-Verlag},
     place={Berlin},
      date={1993},
     pages={x+449},
}


\bib{dhi}{article}{
   author={Ditzian, Z.},
   author={Hristov, V. H.},
   author={Ivanov, K. G.},
   title={Moduli of smoothness and $K$-functionals in $L_p$,
   $0<p<1$},
   journal={Constr. Approx.},
   volume={11},
   date={1995},
   number={1},
   pages={67--83},
}

\bib{djl}{article}{
   author={Ditzian, Z.},
   author={Jiang, D.},
   author={Leviatan, D.},
   title={Inverse theorem for best polynomial approximation in $L_p,\;0<p<1$},
   journal={Proc. Amer. Math. Soc.},
   volume={120},
   date={1994},
   number={1},
   pages={151--155},
}

\bib{dlub}{article}{
   author={Ditzian, Z.},
   author={Lubinsky, D. S.},
   title={Jackson and smoothness theorems for Freud weights in $L_p\
   (0<p\leq\infty)$},
   journal={Constr. Approx.},
   volume={13},
   date={1997},
   number={1},
   pages={99--152},
}

\bib{dt}{book}{
  author={Ditzian, Z.},
  author={Totik, V.},
  title={Moduli of smoothness},
  series={Springer Series in Computational Mathematics},
  volume={9},
  publisher={Springer-Verlag},
  place={New York},
  date={1987},
  pages={x+227},
  isbn={0-387-96536-X},
}



\bib{e}{article}{
   author={Erd{\'e}lyi, T.},
   title={Notes on inequalities with doubling weights},
   journal={J. Approx. Theory},
   volume={100},
   date={1999},
   number={1},
   pages={60--72},
}


\bib{fm}{article}{
   author={Fefferman, C.},
   author={Muckenhoupt, B.},
   title={Two nonequivalent conditions for weight functions},
   journal={Proc. Amer. Math. Soc.},
   volume={45},
   date={1974},
   pages={99--104},
}

 \bib{k-weighted}{article}{
   author={Kopotun, K. A.},
   title={Polynomial approximation with doubling weights},
   journal={preprint (http://arxiv.org/abs/1408.5452)},
}


 \bib{kls-ca}{article}{
   author={Kopotun, K. A.},
   author={Leviatan, D.},
   author={Shevchuk, I. A.},
   title={New moduli of smoothness: weighted DT moduli revisited and applied},
   journal={Constr. Approx.},
   language={to appear (http://arxiv.org/abs/1408.2017)},
}

%

\bib{mt2001}{article}{
   author={Mastroianni, G.},
   author={Totik, V.},
   title={Best approximation and moduli of smoothness for doubling weights},
   journal={J. Approx. Theory},
   volume={110},
   date={2001},
   number={2},
   pages={180--199},
}

\bib{mt2000}{article}{
   author={Mastroianni, G.},
   author={Totik, V.},
   title={Weighted polynomial inequalities with doubling and $A_\infty$
   weights},
   journal={Constr. Approx.},
   volume={16},
   date={2000},
   number={1},
   pages={37--71},
}

\bib{mt1999}{article}{
   author={Mastroianni, G.},
   author={Totik, V.},
   title={Jackson type inequalities for doubling weights. II},
   journal={East J. Approx.},
   volume={5},
   date={1999},
   number={1},
   pages={101--116},
}

\bib{mt1998}{article}{
   author={Mastroianni, G.},
   author={Totik, V.},
   title={Jackson type inequalities for doubling and $A_p$ weights},
   booktitle={Proceedings of the Third International Conference on
   Functional Analysis and Approximation Theory, Vol. I (Acquafredda di
   Maratea, 1996)},
   journal={Rend. Circ. Mat. Palermo (2) Suppl.},
   number={52, Vol. I},
   date={1998},
   pages={83--99},
}

%



\bib{pp}{book}{
   author={Petrushev, P. P.},
   author={Popov, V. A.},
   title={Rational approximation of real functions},
   series={Encyclopedia of Mathematics and its Applications},
   volume={28},
   publisher={Cambridge University Press, Cambridge},
   date={1987},
   pages={xii+371},
}


\bib{stein}{book}{
   author={Stein, E. M.},
   title={Harmonic analysis: real-variable methods, orthogonality, and
   oscillatory integrals},
   series={Princeton Mathematical Series},
   volume={43},
   note={With the assistance of Timothy S. Murphy;
   Monographs in Harmonic Analysis, III},
   publisher={Princeton University Press, Princeton, NJ},
   date={1993},
   pages={xiv+695},
}


 \end{biblist}
\end{bibsection}

\end{document}